\def\today{\number\day\space\ifcase\month\or   January\or February\or
   March\or April\or May\or June\or   July\or August\or September\or
   October\or November\or December\fi\   \number\year}
\newtheorem{lma}{Lemma}[section]
\newaliascnt{thmCt}{lma}
\newtheorem{thm}[thmCt]{Theorem}
\newaliascnt{corCt}{lma}
\newtheorem{cor}[corCt]{Corollary}
\newaliascnt{propCt}{lma}
\newtheorem{prop}[propCt]{Proposition}
\newtheorem*{thm*}{Theorem}
\newtheorem*{cor*}{Corollary}
\newtheorem*{prop*}{Proposition}
\theoremstyle{definition}
\newaliascnt{pgrCt}{lma}
\newaliascnt{dfCt}{lma}
\newtheorem{df}[dfCt]{Definition}
\newaliascnt{rmkCt}{lma}
\newtheorem{rmk}[rmkCt]{Remark}
\newaliascnt{rmksCt}{lma}
\newtheorem{rmks}[rmksCt]{Remarks}
\newaliascnt{qstCt}{lma}
\newtheorem{qst}[qstCt]{Question}
\newaliascnt{pbmCt}{lma}
\newtheorem{pbm}[pbmCt]{Problem}
\newaliascnt{notaCt}{lma}
\newtheorem{nota}[notaCt]{Notation}
\newcommand{\Z}{{\mathbb{Z}}}
\newcommand{\R}{{\mathbb{R}}}
\newcommand{\C}{{\mathbb{C}}}
\newcommand{\N}{{\mathbb{N}}}
\newcommand{\K}{{\mathcal{K}}}
\newcommand{\B}{{\mathcal{B}}}
\newcommand{\Isom}{{\mathrm{Isom}}}
\newcommand{\Full}{{\mathrm{Full}}}
\newcommand{\id}{{\mathrm{id}}}
\newcommand{\supp}{{\mathrm{supp}}}
\newcommand{\Aut}{{\mathrm{Aut}}}
\newcommand{\Max}{{\mathrm{Max}}}
\DeclareMathOperator{\UPF}{UPF}
\DeclareMathOperator{\Ind}{Ind}
\DeclareMathOperator{\ev}{ev}
\newcommand{\SQL}{S\!Q\!L}
\newcommand{\QSL}{Q\!S\!L}
\newcommand{\SL}{S\!L}
\newcommand{\QL}{Q\!L}
\newcommand{\classL}{L}
\newcommand{\classSL}{\SL}
\newcommand{\classQL}{\QL}
\newcommand{\classQSL}{\QSL}
\newcommand{\classSQL}{\SQL}
\DeclareMathOperator{\PF}{PF}
\DeclareMathOperator{\Rep}{Rep}
\newcommand{\ca}{$C^*$-algebra}
\newcommand{\lcg}{locally compact group}
\newcommand{\subQ}{\mathrm{Q}}
\newcommand{\subS}{\mathrm{S}}
\newcommand{\subQS}{\mathrm{QS}}
\title{Group algebras acting on $L^p$-spaces}
\date{\today}
\author{Eusebio Gardella}
\address{Eusebio Gardella
Department of Mathematics, Deady Hall, University of Oregon
Eugene OR, 97403, USA.}
\email{gardella@uoregon.edu}
\urladdr{http://pages.uoregon.edu/gardella}
\author{Hannes Thiel}
\address{Hannes Thiel
Mathematisches Institut, Fachbereich Mathematik und Informatik der
Universit\"at M\"unster, Einsteinstrasse 62, 48149 M\"unster, Germany.}
\email{hannes.thiel@uni-muenster.de}
\urladdr{http://wwwmath.uni-muenster.de/u/hannes.thiel/}
\thanks{The first named author was partially supported by the D.~K. Harrison Prize from the
University of Oregon. The second named author was partially supported by the Deutsche
Forschungsgemeinschaft (SFB 878).}
\subjclass[2010]{Primary:
22D20, %Representations of group algebras
43A15, %$L^p$-spaces and other function spaces on groups, semigroups, etc.
43A07. %Means on groups, semigroups, etc.; amenable groups
Secondary:
43A65, %Representations of groups, semigroups, etc.
46E30, %Spaces of measurable functions ($L^p$-spaces, Orlicz spaces, Köthe function spaces, Lorentz spaces, rearrangement invariant spaces, ideal spaces, etc.)
47L10.%Algebras of operators on Banach spaces and other topological linear spaces
}
\keywords{Locally compact group, $QSL^p$-space, Banach algebra of $p$-pseudofunctions, amenability}
\begin{document}

%==========================================================================================
\begin{abstract}
For $p\in [1,\infty)$ we study representations of a locally compact group $G$ on
$L^p$-spaces and $QSL^p$-spaces.
The universal completions $F^p(G)$ and $F^p_{\mathrm{QS}}(G)$ of $L^1(G)$ with respect to these
classes of representations (which were first considered by Phillips and Runde, respectively),
can be regarded as analogs of the full group \ca{} of $G$ (which is the case $p=2$).
We study these completions of $L^1(G)$ in relation to the algebra $F^p_\lambda(G)$ of $p$-pseudofunctions.
We prove a characterization of group amenability in terms of certain canonical
maps between these universal Banach algebras.
In particular, $G$ is amenable if and only if $F^p_{\mathrm{QS}}(G)=F^p(G)=F^p_\lambda(G)$.

One of our main results is
that for $1\leq p< q\leq 2$, there is a canonical map $\gamma_{p,q}\colon F^p(G)\to F^q(G)$ which is
contractive and has dense range.
When $G$ is amenable, $\gamma_{p,q}$ is injective, and it is never surjective unless $G$ is finite.
We use the maps $\gamma_{p,q}$ to show that when $G$ is discrete, all (or one) of the universal
completions of $L^1(G)$ are amenable as a Banach algebras if and only if $G$ is amenable.

Finally, we exhibit a family of examples showing that the characterizations of group amenability
mentioned above cannot be extended to $L^p$-operator crossed products of topological spaces.
\end{abstract}

\maketitle

\tableofcontents

\section{Introduction}
\label{sec:Intro}

In this paper, we study representations of a \lcg{} $G$ on certain classes of Banach spaces, as well
as the corresponding completions of the group algebra $L^1(G)$.
More specifically, for each $p\in[1,\infty)$, we consider the class $\classL^p$ of $L^p$-spaces;
the class $\classSL^p$ of closed subspaces of Banach spaces in $\classL^p$;
the class $\classQL^p$ of quotients of Banach spaces in $\classL^p$ by closed subspaces; and
the class $\classQSL^p$ of quotients of Banach spaces in $\classSL^p$ by closed subspaces (\autoref{df:importantClasses}).
We denote the corresponding universal completions of $L^1(G)$ by $F^p(G), F^p_\subS(G),
F^p_\subQ(G)$ and $F^p_\subQS(G)$, respectively; see \autoref{df:abbreviations}.

We also study the algebra $F^p_\lambda(G)$ of $p$-pseudofunctions on $G$.
This is the Banach subalgebra of $\B(L^p(G))$ generated by all the operators of left convolution by functions in $L^1(G)$.
Equivalently, $F^p_\lambda(G)$ is the closure of the image of the left regular representation $\lambda_p\colon L^1(G)\to \B(L^p(G))$.
This algebra was introduced by Herz in \cite{Her73SynthSubgps}, where $F^p_\lambda(G)$ was denoted $PF_p(G)$
(see also \cite{NeuRun09ColumnRowQSL}).

The fact that $L^1(G)$ has a contractive approximate identity implies that there exist canonical
isometric isomorphisms
\[
L^1(G)\cong F^1_\lambda(G)\cong F^1(G) \cong F^1_\subS(G)\cong F^1_\subQ(G)\cong F^1_\subQS(G);
\]
see \autoref{prp:F1} and \autoref{rem:poneReduced}.
However, for $p\neq 1$, the existence of a canonical (not necessarily isometric) isomorphism between any of
the algebras $F^p(G)$, $F^p_\subS(G)$, $F^p_\subQ(G)$ or $F^p_\subQS(G)$, and the algebra $F^p_\lambda(G)$,
is equivalent to amenability of $G$; see \autoref{thm:AmenTFAE}.
For $p=2$, the algebra $F^2(G)$ is the full group \ca{} of $G$, usually denoted $C^*(G)$.

Using an extension theorem of Hardin, we show that for $p\notin\{4,6,8,\ldots\}$ and
$q\notin\{\tfrac{4}{3},\tfrac{6}{5},\tfrac{8}{7},\ldots\}$, and regardless of $G$, there are canonical
isometric isomorphisms
\[
F^p_\subS(G) \cong F^p(G) \ \ \mbox{ and } \ \ F^q_\subQ(G) \cong F^q(G).
\]

A consequence of the existence of such isomorphisms is that, for $1\leq p\leq q\leq 2$ and for $2\leq r\leq s<\infty$, there are canonical, contractive homomorphisms
\[
\gamma_{p,q}\colon F^p(G)\to F^q(G) \ \ \mbox{ and } \ \ \gamma_{s,r}\colon F^s(G)\to F^r(G)
\]
with dense range; see \autoref{thm:CanMaps}.
This can be interpreted as saying that the algebras $F^p(G)$ form a `continuous interpolating family' of Banach algebras between the group algebra $F^1(G)=L^1(G)$ and the full group \ca{} $F^2(G)=C^*(G)$.

When $G$ is amenable, our results recover, using different methods, a result announced by Herz as Theorem~C in \cite{Her71pSpApplConv}, and whose proof appears in the corollary on page~512 of \cite{HerRiv72EstimatesMixNorm}.
Furthermore, in this case, we show that for $1\leq p<q\leq 2$ or $2\leq q<p<\infty$, the map $\gamma_{p,q}\colon F^p(G)\to F^q(G)$ is injective, and that it is never surjective unless $G$ is finite;
see \autoref{cor:FpG}.

Another application of \autoref{thm:CanMaps} is as follows: when $G$ is discrete, amenability
of any of the Banach algebras $F^p(G)$, $F^p_\subS(G)$, $F^p_\subQ(G)$ or $F^p_\subQS(G)$, is equivalent to amenability of $G$;
see \autoref{thm:FpGamenableIFF}.
The cases $p=1$ and $p=2$ of this theorem are well-known, the first one being due to B.~Johnson \cite{Joh72CohomologyBAlg}, and holding even if $G$ is not discrete.

A partial summary of our results on characterization of group amenability is as follows.
(The equivalence between (1) and (2) below, specifically for $L^p$-spaces, was independently
obtained by Phillips, whose methods are inspired in $C^*$-algebraic techniques.)

\begin{thm*}
Let $G$ be a locally compact group and let $p\in (1,\infty)$.
Consider the following statements:
\begin{enumerate}
\item
The group $G$ is amenable.
\item
The canonical map from each of the algebras $F^p(G)$, $F^p_\subS(G)$, $F^p_\subQ(G)$, or $F^p_\subQS(G)$, to $F^p_\lambda(G)$, is an isometric isomorphism.
\item
The canonical map from any of the algebras $F^p(G)$, $F^p_\subS(G)$, $F^p_\subQ(G)$, or $F^p_\subQS(G)$, to $F^p_\lambda(G)$, is a (not necessarily isometric) isomorphism.
\item
The algebras $F^p(G),F^p_\subS(G),F^p_\subQ(G)$, and $F^p_\subQS(G)$, are amenable.
\item
At least one of $F^p(G),F^p_\subS(G),F^p_\subQ(G)$, or $F^p_\subQS(G)$, is amenable.
\end{enumerate}
Then (1) $\Leftrightarrow$ (2) $\Leftrightarrow$ (3) $\Rightarrow$ (4) $\Rightarrow$ (5). If $G$ is discrete,
then all five statements are equivalent.
\end{thm*}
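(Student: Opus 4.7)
The plan is to prove the cycle (1) $\Leftrightarrow$ (2) $\Leftrightarrow$ (3), then the implications (1) $\Rightarrow$ (4) $\Rightarrow$ (5), and finally to handle the converse (5) $\Rightarrow$ (1) under the discreteness assumption.

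For (1) $\Leftrightarrow$ (2) $\Leftrightarrow$ (3): The implication (2) $\Rightarrow$ (3) is immediate. For (1) $\Rightarrow$ (2) I would invoke the $L^p$-analog of the Hulanicki--Reiter theorem, to the effect that when $G$ is amenable, every contractive representation of $L^1(G)$ on a $\classQSL^p$-space is dominated in operator norm by the left regular representation $\lambda_p$ on $L^p(G)$. For the class $\classL^p$ this is Herz's Theorem~C mentioned in the introduction; the extension to the wider classes $\classSL^p$, $\classQL^p$ and $\classQSL^p$ follows from the elementary fact that passing to a closed subspace or quotient of an $L^p$-space does not increase the operator norm of the induced representation. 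Consequently each of the four universal norms equals the $F^p_\lambda(G)$-norm, giving the isometric isomorphisms asserted in (2). For (3) $\Rightarrow$ (1), the trivial one-dimensional representation of $G$ on $\C$ is an $\classL^p$-representation and hence extends to a character on each of the four universal completions. If the canonical map to $F^p_\lambda(G)$ is a Banach algebra isomorphism, this character transports to a character on $F^p_\lambda(G)$; concretely, there is $C>0$ with $|\int_G f\,d\mu| \leq C\|\lambda_p(f)\|_{\B(L^p(G))}$ for every $f \in L^1(G)$. This is the $R_p$-condition, well known to characterize amenability of $G$.

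For (1) $\Rightarrow$ (4) $\Rightarrow$ (5): Assuming (1), Johnson's theorem yields amenability of $L^1(G) = F^1(G)$. Each of the four universal completions is the image of $L^1(G)$ under a contractive homomorphism with dense range, and amenability of a Banach algebra is inherited under such homomorphisms; therefore all four are amenable, proving (4). The implication (4) $\Rightarrow$ (5) is trivial.

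The main work goes into (5) $\Rightarrow$ (1) under the discreteness hypothesis. First, the inclusions of representation classes $\classL^p \subset \classSL^p$, $\classQL^p \subset \classQSL^p$ induce canonical contractive homomorphisms with dense range from $F^p_\subQS(G)$ onto $F^p_\subS(G)$ and $F^p_\subQ(G)$, and from each of the latter onto $F^p(G)$. By the hereditary property of amenability invoked above, it suffices to show that amenability of $F^p(G)$ alone implies amenability of $G$. Since $G$ is discrete, $F^p(G)$ is unital with unit $\delta_e$, and the trivial representation induces a character $\tau\colon F^p(G) \to \C$; amenability of $F^p(G)$ therefore produces a bounded approximate identity $(u_\alpha)$ in $\ker(\tau)$, satisfying $\delta_g u_\alpha - u_\alpha \to \delta_g - \delta_e$ in the $F^p(G)$-norm for every $g \in G$. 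My plan is to push this approximate identity through the regular representation $\lambda_p\colon F^p(G) \to \B(\ell^p(G))$ and define $\xi_\alpha := \lambda_p(\delta_e - u_\alpha)\delta_e \in \ell^p(G)$; a direct calculation then shows that $(\xi_\alpha)$ is bounded in $\ell^p$-norm and satisfies $\|L_g \xi_\alpha - \xi_\alpha\|_p \to 0$ for every $g \in G$, where $L_g = \lambda_p(\delta_g)$ is left translation. The principal obstacle is to rule out the degenerate possibility that $\xi_\alpha \to 0$ in $\ell^p$-norm, which would make the construction vacuous; this will require a careful non-degeneracy argument exploiting that $\tau(\delta_e - u_\alpha) = 1$ and that $\delta_e$ is cyclic for $\lambda_p$. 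Once non-degeneracy is secured, normalisation of the $\xi_\alpha$ yields Reiter's condition $P_p$ for $G$, and hence amenability of $G$ by the classical $P_p$-characterization.
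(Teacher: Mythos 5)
Your outline is fine for the easy implications --- (2)$\Rightarrow$(3), (3)$\Rightarrow$(1) via boundedness of the trivial character on $F^p_\lambda(G)$ (this is essentially the paper's route, via Theorem~4.1 of Neufang--Runde), and (1)$\Rightarrow$(4)$\Rightarrow$(5) via Johnson's theorem and heredity of amenability under dense-range contractive homomorphisms --- but two steps have genuine gaps. In (1)$\Rightarrow$(2), your reduction of the classes $\classSL^p$, $\classQL^p$, $\classQSL^p$ to the class $\classL^p$ does not work: the ``elementary fact'' you invoke goes the wrong way. A nondegenerate representation of $L^1(G)$ on a closed subspace $F$ of an $L^p$-space $E$ is not, in general, the restriction of a representation on $E$, so there is no ambient $L^p$-representation whose norm dominates it; extending isometries from $F$ to $E$ is exactly the content of Hardin's theorem, which the paper needs an entire section for and which fails for $p\in\{4,6,8,\dots\}$, and even that does not touch the quotient-of-subspace class. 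The paper instead proves $\|f\|_{\classQSL^p}\leq\|\lambda_p(f)\|$ for amenable $G$ by dualizing: it identifies $F^p_\subQS(G)'$ with Runde's $p'$-Fourier--Stieltjes algebra $B_{p'}(G)$ and quotes Runde's theorem that for amenable $G$ this coincides isometrically with $F^p_\lambda(G)'$. (Also, Herz's Theorem~C compares $\|\lambda_q(f)\|_q$ with $\|\lambda_p(f)\|_p$ for the two \emph{regular} representations; it is not the statement that arbitrary $L^p$-representations are dominated by $\lambda_p$.)

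More seriously, your (5)$\Rightarrow$(1) for discrete $G$ stalls exactly where you flag it. If $G$ were non-amenable, the augmentation character $\tau$ would \emph{not} be continuous for the norm of $F^p_\lambda(G)$ --- that failure is equivalent to non-amenability by your own (3)$\Rightarrow$(1) --- so $\tau(\delta_e-u_\alpha)=1$ gives no lower bound on $\|\lambda_p(\delta_e-u_\alpha)\delta_e\|_p$, and nothing rules out $\xi_\alpha\to 0$; cyclicity of $\delta_e$ does not help, since $(u_\alpha)$ is an approximate identity only for $\ker(\tau)$ and does not commute with left convolutions. The ``careful non-degeneracy argument'' you defer is therefore not a technical detail but the entire difficulty of the implication. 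The paper's proof avoids it altogether: amenability of any one of the four algebras passes to $F^p(G)$ along a dense-range contractive homomorphism, then through the canonical contractive map $\gamma_{p,2}\colon F^p(G)\to C^*(G)$ of \autoref{thm:CanMaps} (whose construction, via Hardin's extension theorem, is one of the paper's main technical results) to give amenability of $C^*(G)$, and finally one quotes the known $C^*$-algebraic theorem that amenability of $C^*(G)$ for discrete $G$ forces amenability of $G$. If you insist on a direct argument you would in effect have to reprove that $C^*$-fact, which is itself nontrivial.
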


Finally, we show in \autoref{thm:ComputationCP} that the theorem above, particularly the implications `(2) $\Rightarrow$ (1)' and `(4) $\Rightarrow$ (1)', cannot be generalized to $L^p$-crossed products of discrete groups acting on algebras of the form $C_0(X)$, for some locally compact Hausdorff space~$X$.

Further connections between $G$ and $F^p_\lambda(G)$ will be explored in \cite{GarThi14pre:IsoConv};
functoriality properties of $F^p_\lambda(G)$ (and those of the universal completions of $L^1(G)$ discussed
above) are studied in \cite{GarThi14arX:Functoriality}; and applications of the results in this paper
are given in \cite{GarThi14arX:LpGenInvIsom}.

We have made the effort of not adopting any cardinality assumptions ($\sigma$-finiteness of measures, second-countability
or $\sigma$-compactness of groups, or separability of Banach spaces) whenever possible. This implies considerable
additional work when showing the existence of the maps $\gamma_{p,q}\colon F^p(G)\to F^q(G)$; see \autoref{thm:CanMaps}.
Indeed, some of the techniques for dealing with $L^p$-spaces require the involved measure spaces
to be $\sigma$-finite (as is the case with Hardin's theorem).
In order to directly apply such techniques, one has to restrict to second-countable (or sometimes $\sigma$-compact) \lcg{s}.
However, one can often reduce a problem about a \lcg{} $G$ to second-countable \lcg{s} by applying the following two steps:
First, $G$ is the union of open, $\sigma$-compact subgroups.
Second, by the Kakutani-Kodaira Theorem, every $\sigma$-compact, \lcg{} $H$ contains an arbitrarily small compact, normal subgroup $N$ such that $H/N$ is second-countable.
This technique is for instance used to prove \autoref{thm:NormsLcg}.

\vspace{0.3cm}

All locally compact groups will be endowed with their left Haar measure.
We take $\N=\{1,2,\ldots\}$.
For $n$ in $\N$ and $p\in [1,\infty]$, we write $\ell^p_n$ in place of $\ell^p(\{1,\ldots,n\})$, and we write $\ell^p$ in place of $\ell^p(\Z)$.

Let $E$ be a Banach space.
We write $E_1$ for the unit ball of $E$ and $E'$ for its dual space.
If $F$ is another Banach space, we denote by $\B(E,F)$ the Banach space of all bounded linear operators $E\to F$, and write $\B(E)$ in place of $\B(E,E)$.
We denote by $\Isom(E)$ the subset of $\B(E)$ consisting of invertible isometries.
(In this paper, invertible isometries will always be assumed to be surjective.)
The group $\Isom(E)$ will be endowed with the strong operator topology.
It is a standard fact that $\Isom(E)$ is a Polish group whenever $E$ is a separable Banach space.
Moreover, it is easy to check that if $X$ is a topological (measurable) space, a function
$u\colon X\to\Isom(E)$ is continuous (measurable) if and only if for every $\xi\in E$, the map
$X\to E$ given by $x\mapsto u(x)\xi$ for $x\in X$, is continuous (measurable).

For a bounded linear map $\varphi\colon E\to F$, we will write $\varphi'\colon F'\to E'$ for its dual map.
For $p\in (1,\infty)$, we denote by $p'$ its conjugate (H\"older) exponent, which satisfies $\tfrac{1}{p}+\tfrac{1}{p'}=1$.
\vspace{5pt}

%==========================================================================================
\subsection*{Acknowledgements}
Part of this work was completed while the authors were attending the Thematic Program on Abstract Harmonic Analysis, Banach and Operator Algebras at the Fields Institute in January-June 2014, and while the second named author was visiting the University of Oregon in July and August 2014. The hospitality of the Fields Institute and University of Oregon are gratefully acknowledged.

The authors would like to thank Nico Spronk for helpful conversations,
and Antoine Derighetti and Bill Johnson for electronic correspondence. We are especially
indebted to Chris Phillips for many insightful conversations, as well as for sharing
some of his unpublished work with us, particularly concerning incompressibility
(\cite{Phi14pre:LpIncompr}) and group $\classL^p$-operator algebras (\cite{Phi14pre:LpMultDom}).

%==========================================================================================
\section{Universal completions of \texorpdfstring{$L^1(G)$}{L1(G)}}

%==========================================================================================
Let $G$ be a \lcg.
We let $L^1(G)$ denote the Banach algebra of complex-valued functions on $G$ that are integrable (with respect to the left Haar measure), with product given by convolution.

A representation of a Banach algebra $A$ on a Banach space $E$ is a contractive homomorphism $A\to\B(E)$.

%==========================================================================================
\begin{df}
\label{df:universalCompletion}
Let $\mathcal{E}$ be a class of Banach spaces.
We denote by $\Rep_\mathcal{E}(G)$ the class of all
non-degenerate representations of $L^1(G)$ on Banach spaces in $\mathcal{E}$.
Given $f$ in $L^1(G)$, set
\[
\|f\|_{\mathcal{E}}
= \sup \left\{ \| \pi(f) \| \colon \pi\in\Rep_\mathcal{E}(G) \right\}.
\]
(Note that the supremum exists, even if $\mathcal{E}$ is not a set.)
Then $\|\cdot\|_\mathcal{E}$ defines a seminorm on $L^1(G)$.
Set
\[
I_\mathcal{E}
=\left\{ f\in L^1(G) \colon \|f\|_{\mathcal{E}}=0 \right\}
= \bigcap_{\pi\in \Rep_\mathcal{E}(G)}\ker(\pi),
\]
which is a closed ideal in $L^1(G)$.
We write $F_\mathcal{E}(G)$ for the completion of $L^1(G)/I_\mathcal{E}$ in the induced norm.
\end{df}

%==========================================================================================
\begin{rmk}
Let $G$ be a \lcg, and let $\mathcal{E}$ be a class of Banach spaces.
Then the canonical map $L^1(G)\to F_\mathcal{E}(G)$ is contractive and has dense range.
Moreover, it is injective if and only if the elements of $\Rep_\mathcal{E}(G)$ separate the points
of $L^1(G)$, meaning that for every $f\in L^1(G)$ with $f\neq 0$, there exists $\pi\in\Rep_\mathcal{E}(G)$ such that $\pi(f)\neq 0$.
\end{rmk}

%==========================================================================================
\begin{df}
Let $G$ be a \lcg.
An \emph{isometric representation} of $G$ on a Banach space $E$ is a continuous group homomorphism
of $G$ into the group $\Isom(E)$ of invertible isometries of $E$, where $\Isom(E)$ is equipped with the strong operator topology.
Equivalently, an isometric representation of $G$ is a strongly continuous action of $G$ on $E$ via isometries.
\end{df}

%==========================================================================================
The following result is folklore, and we omit its proof.

%==========================================================================================
\begin{prop}
\label{prop:integratedForm}
Let $G$ be a \lcg\ and let $E$ be a Banach space.
Then there is a natural bijective correspondence between nondegenerate contractive 
representations $L^1(G)\to\B(E)$ and isometric representations of $G$ on $E$.

If $\rho\colon G\to \Isom(E)$ is an isometric representation, then the induced nondegenerate 
contractive representation
$\pi_\rho\colon L^1(G)\to \B(E)$ is given by
\[
\pi_\rho(f)(\xi)=\int_Gf(s)\rho_s(\xi)\ ds
\]
for all $f\in L^1(G)$ and all $\xi\in E$, and it is called the \emph{integrated form} of $\rho$.
\end{prop}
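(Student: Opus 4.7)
My plan is to establish the correspondence in both directions and then check that the two constructions are mutually inverse. The two main issues will be making sense of the integrated form for a general (not necessarily $\sigma$-compact) $G$, and reconstructing a strongly continuous group homomorphism $G\to\Isom(E)$ from a representation of $L^1(G)$ using only the nondegeneracy assumption.

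First direction: from an isometric representation $\rho\colon G\to\Isom(E)$, produce $\pi_\rho$. For $\xi\in E$, strong continuity of $\rho$ makes $s\mapsto \rho_s(\xi)$ continuous, hence Bochner measurable on each $\sigma$-compact open subgroup; a standard patching argument (or one-point-at-a-time use of separability of $\overline{\rho(G)\xi}$) shows $s\mapsto f(s)\rho_s(\xi)$ is Bochner integrable for $f\in L^1(G)$, and
\[
\|\pi_\rho(f)\xi\|\le\int_G|f(s)|\,\|\rho_s(\xi)\|\,ds = \|f\|_1\|\xi\|,
\]
so $\pi_\rho(f)\in\B(E)$ and $\pi_\rho$ is contractive. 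Multiplicativity $\pi_\rho(f*g)=\pi_\rho(f)\pi_\rho(g)$ is a Fubini-plus-change-of-variables calculation using that $\rho$ is a group homomorphism and the Haar measure is left invariant. Nondegeneracy follows by choosing an approximate identity $(e_i)$ in $L^1(G)$ consisting of positive functions supported in shrinking neighborhoods of $e\in G$: by strong continuity of $\rho$, $\pi_\rho(e_i)\xi\to\xi$ for every $\xi\in E$, so $\bigcup_f \pi_\rho(f)E$ is dense.

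Second direction: from a nondegenerate contractive $\pi\colon L^1(G)\to\B(E)$, recover $\rho$. For $s\in G$ let $\lambda_s f(t)=f(s^{-1}t)$ denote left translation on $L^1(G)$, which is an isometric automorphism and satisfies $\lambda_s(f*g)=(\lambda_s f)*g$. Define $\rho_s$ on the subspace $\pi(L^1(G))E$ by
\[
\rho_s\bigl(\pi(f)\xi\bigr)=\pi(\lambda_s f)\xi.
\]
To see this is well-defined and isometric one uses Cohen's factorization theorem to write every vector in $\overline{\pi(L^1(G))E}$ as $\pi(f)\xi$ (not only as a sum), combined with the identity $\pi(\lambda_s f)\pi(g)=\pi(\lambda_s(f*g))$ and nondegeneracy; $\rho_s$ then extends to an invertible isometry of $E$, with $\rho_s\rho_t=\rho_{st}$ from $\lambda_s\lambda_t=\lambda_{st}$. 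Strong continuity of $s\mapsto\rho_s\xi$ reduces, again by Cohen's factorization, to continuity of $s\mapsto\lambda_s f$ in $L^1(G)$ for fixed $f$, which is the classical continuity of translation.

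Finally, one checks $\pi_{\rho_\pi}=\pi$ and $\rho_{\pi_\rho}=\rho$. The first is a direct computation: for $f,g\in L^1(G)$ and $\xi\in E$,
\[
\pi_{\rho_\pi}(f)\bigl(\pi(g)\xi\bigr)=\int_G f(s)\pi(\lambda_s g)\xi\,ds=\pi\!\left(\textstyle\int_G f(s)\lambda_s g\,ds\right)\xi=\pi(f*g)\xi,
\]
and the outer vectors are dense by nondegeneracy. The second identity is similar, using the defining formula for $\pi_\rho$ evaluated on an approximate identity. The main obstacle is handling the absence of second-countability in the Bochner integration step and in verifying well-definedness via Cohen's factorization; both are routine once one knows to invoke these tools, but they are where the folklore proof requires care.
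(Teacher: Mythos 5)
The paper offers no proof to compare against: it explicitly labels \autoref{prop:integratedForm} as folklore and omits the argument. Your proposal is the standard folklore proof and is correct: integration of $\rho$ against $f$ via the Bochner integral (with the $\sigma$-compact support reduction handling non-second-countable $G$), recovery of $\rho_s$ from $\pi$ via left translation on $L^1(G)$, and the two inverse checks on the dense subspace $\pi(L^1(G))E$. The only place wanting a word more of care is well-definedness of $\rho_s(\pi(f)\xi)=\pi(\lambda_s f)\xi$: Cohen factorization only gives surjectivity of $(f,\xi)\mapsto\pi(f)\xi$, and the actual independence of the representative comes from writing $\pi(\lambda_s f)\xi=\lim_i\pi(\lambda_s e_i)\bigl(\pi(f)\xi\bigr)$ for an approximate identity $(e_i)$, using $(\lambda_s e_i)\ast f=\lambda_s(e_i\ast f)\to\lambda_s f$; you have exactly the right ingredients for this, so the gap is purely expository.
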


%==========================================================================================
The following are the classes of Banach spaces that we are mostly interested in.

%==========================================================================================
\begin{df}
\label{df:importantClasses}
Let $p\in[1,\infty)$, and let $E$ be a Banach space.
\begin{enumerate}
\item
We say that $E$ is an \emph{$L^p$-space} if there exists a measure space $(X,\mu)$ such that $E$ is isometrically isomorphic to $L^p(X,\mu)$.
\item
We say that $E$ is an \emph{$\SL^p$-space} if there is an $L^p$-space $F$ such that $E$ is isometrically isomorphic to a closed subspace of $F$.
\item
We say that $E$ is a \emph{$\QL^p$-space} if there is an $L^p$-space $F$ such that $E$ is isometrically isomorphic to a quotient of $F$ by a closed subspace.
\item
We say that $E$ is a \emph{$\QSL^p$-space} if there is an $\SL^p$-space $F$ such that $E$ is isometrically isomorphic to a quotient of $F$ by a closed subspace.
\end{enumerate}
We let $\classL^p$ (respectively, $\classSL^p$, $\classQL^p$, $\classQSL^p$) denote the class of all $L^p$-spaces
(respectively, $\SL^p$-spaces, $\QL^p$-spaces, $\QSL^p$-spaces).
\end{df}

%==========================================================================================
\begin{rmks}
Let $p\in[1,\infty)$.

(1)
If $E$ is a separable $L^p$-space, then there exists a finite measure space $(X,\mu)$ such that $E$ is isometrically isomorphic to $L^p(X,\mu)$.
One can also show that every separable $\SL^p$-space is a subspace of a separable $L^p$-space, and analogously for separable $\QL^p$-spaces and $\QSL^p$-spaces. See \cite{GarThi14pre:ReprBAlg}.

(2)
It is a routine exercise to check that if $F$ is a closed subspace of a quotient of an $L^p$-space $E$,
then $F$ is (isometrically isomorphic to) a quotient of a subspace of $E$.
It follows that the class $\classQSL^p$ coincides with the class of Banach spaces that are isometrically isomorphic to a closed subspace of a $\QL^p$-space.
(The latter is the class that one would denote by $\classSQL^p$.)

(3)
Examples of $L^p$-spaces are $L^p([0,1])$ with Lebesgue measure; $\ell^p$ with counting measure;
and $\ell^p_n$ with counting measure, for $n\in\N$.
It is well-known that every separable $L^p$-space is isometrically isomorphic to a countable $p$-direct sum of these.
In particular, up to isometric isomorphism, there are only countably many separable Banach spaces in the class $\classL^p$.
The classes $\classSL^p$ and $\classQL^p$ are much larger, as the following result shows.
\end{rmks}

%==========================================================================================
\begin{prop}
\label{prop:LqSLp}
Let $p,q\in [1,\infty)$.
\begin{enumerate}
\item
If $p \leq q \leq 2$, then $\classSL^p\supseteq\classL^q$.
\item
If $2\leq r \leq s$, then $\classL^r\subseteq\classQL^s$.
\end{enumerate}
\end{prop}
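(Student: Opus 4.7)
The plan is to prove (1) by constructing an explicit isometric embedding of $L^q$-spaces into $L^p$-spaces via $q$-stable random measures, and to deduce (2) from (1) by a standard duality argument.

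For (1), fix $1 \leq p \leq q \leq 2$; the case $p=q$ is immediate. The classical input, going back to L\'evy and developed in the Banach-space setting by Bretagnolle, Dacunha-Castelle and Krivine, asserts that for any measure space $(X,\mu)$ there exists, on some probability space $(\Omega,\mathbb{P})$, a symmetric $q$-stable random measure $M$ on $(X,\mu)$: a stochastic process indexed by the measurable subsets of $X$ of finite measure whose values on pairwise disjoint sets are independent symmetric $q$-stable random variables with scale parameter $\mu(\cdot)^{1/q}$. I would then define the stochastic integral $\Phi(f)=\int_X f\,dM$ first on simple functions and extend it by density to all of $L^q(X,\mu)$. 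The characteristic-function identity
\[
\mathbb{E}\exp\bigl(it\,\Phi(f)\bigr)\;=\;\exp\bigl(-|t|^q\|f\|_{L^q}^q\bigr)
\]
shows that $\Phi(f)$ is itself symmetric $q$-stable with scale $\|f\|_{L^q}$, so a standard computation of the $p$-th absolute moment of a symmetric $q$-stable variable (which is finite since $p<q$, or since $q=2$) yields $\|\Phi(f)\|_{L^p(\Omega)}=C_{p,q}\|f\|_{L^q(X,\mu)}$ for a constant $C_{p,q}>0$ depending only on $p$ and $q$. Then $C_{p,q}^{-1}\Phi$ is the desired isometric embedding of $L^q(X,\mu)$ into $L^p(\Omega,\mathbb{P})$, proving (1). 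For $p=1$ and $q=2$ one may alternatively use an i.i.d.\ Gaussian family.

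For (2), assume $2\leq r\leq s<\infty$. The conjugate exponents then satisfy $1<s'\leq r'\leq 2$, so by (1) there is an isometric embedding $\iota\colon L^{r'}\hookrightarrow F$ for some $L^{s'}$-space $F$. A direct Hahn-Banach argument shows that the Banach-space adjoint $\iota'\colon F'\to (L^{r'})'$ is a quotient map. Since $s'>1$ and $r'>1$, the standard duality identifications $F'\cong L^s$ and $(L^{r'})'\cong L^r$ realize $L^r$ as a quotient of the $L^s$-space $F'$, so $L^r\in\classQL^s$.

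The main obstacle is the construction of the $q$-stable random measure with the required independence of increments on an arbitrary, possibly non-$\sigma$-finite measure space $(X,\mu)$, together with the verification that the stochastic integral is well-defined and satisfies the stated isometry identity. These facts are classical but nontrivial to prove from scratch, so the cleanest route would be to invoke them from the literature rather than reproduce the construction. Once (1) is in hand, the duality step for (2) is entirely formal.
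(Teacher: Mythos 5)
Your argument is correct, but it takes a genuinely different route from the paper's in part (1). The paper treats the classical separable embedding of $L^q([0,1])$ into $L^p([0,1])$ (Proposition~11.1.9 in \cite{AlbKal06TopicsBSp}) as a black box and globalizes it by a soft local argument: an arbitrary $L^q$-space is finitely representable in $\ell^q$, hence in $\ell^p$, hence embeds isometrically into an ultrapower of $\ell^p$, and the class $\classL^p$ is closed under ultrapowers. You instead rebuild the classical input directly for an arbitrary $(X,\mu)$ via symmetric $q$-stable random measures. Both approaches work; the trade-off is that the ultrapower argument sidesteps entirely the measure-theoretic issue you flag (non-$\sigma$-finiteness of $\mu$), at the price of invoking finite representability and the stability of $\classL^p$ under ultraproducts, whereas your construction is explicit but must be set up with care on a general measure space. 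Two remarks on your version: (i) the cleanest way around the $\sigma$-finiteness obstacle is to index the stable process by the functions themselves rather than by measurable sets --- since $\exp\bigl(-\|\cdot\|_q^q\bigr)$ is positive definite on $L^q$ for $q\leq 2$, Kolmogorov's consistency theorem (which is valid for arbitrary index sets) produces a linear map $\Phi$ with the stated distributional properties, and no stochastic integration or decomposition of $(X,\mu)$ is needed; (ii) the paper works with complex scalars throughout, so you need the complex (rotation-invariant) version of the stable embedding, which is also classical but is an extra point to cite. Your duality argument for (2) is exactly the paper's intended one, with the details the paper omits (the adjoint of an isometric embedding is a metric surjection, and $(L^{s'})'\cong L^{s}$ for $1<s'<\infty$ over arbitrary measure spaces) correctly supplied.
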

\begin{proof}
Let $1\leq p\leq q\leq 2$.
In Proposition~11.1.9 in \cite{AlbKal06TopicsBSp} it is shown that the space $L^p([0,1])$ isometrically embeds into $L^q([0,1])$.
Essentially the same argument can be used to prove part (1) as follows:
Given an $L^q$-space $E$, it is clear that $E$ is finitely representable in $\ell^q$.
The assumptions on $p$ and $q$ ensure that $\ell^q$ is finitely representable in $\ell^p$.
It follows that $E$ is finitely representable on $\ell^p$.
Therefore, $E$ is isometrically isomorphic to a subspace of some ultrapower of $\ell^p$.
The result follows since $\classL^p$ is closed under ultrapowers.
(See \cite{Hei80UltraprodBSp} and \cite{GarThi14pre:ReprBAlg} for details.)

Part (2) can be deduced from part (1) using duality.
\end{proof}

%==========================================================================================
Since we are mostly interested in the completions of $L^1(G)$ with respect to the classes from \autoref{df:importantClasses}, we will use special notation for these.

%==========================================================================================
\begin{nota}
\label{df:abbreviations}
Let $p\in[1,\infty)$, and let $G$ be a \lcg.
We make the following abbreviations:
\begin{align*}
F^p(G) = F_{\classL^p}(G),\quad
&F^p_\subQ(G) = F_{\classQL^p}(G),\\
F^p_\subS(G) = F_{\classSL^p}(G),\quad
&F^p_\subQS(G) = F_{\classQSL^p}(G).
\end{align*}
\end{nota}

%==========================================================================================
\begin{rmks}
\label{rmk:FpG}
Let $p\in[1,\infty)$, and let $G$ be a \lcg.

(1)
It is well known that every locally compact group $G$ admits a faithful isometric representation on an $L^p$-space (namely, the left regular representation).
It follows that the natural map from $L^1(G)$ to any of $F^p(G), F^p_\subS(G), F^p_\subQ(G)$ and $F^p_\subQS(G)$ is injective.

(2)
Let $\mathcal{E}$ denote any of the classes $\classL^p, \classSL^p, \classQL^p$ or $\classQSL^p$.
Since $\mathcal{E}$ is closed under $p$-direct sums, it follows that that $F_\mathcal{E}(G)$ is an $\mathcal{E}$-operator algebra by the results in \cite{GarThi14pre:ReprBAlg}.
This means that there exists a Banach space $E$ in $\mathcal{E}$ and an isometric representation $F_\mathcal{E}(G)\to\B(E)$.
Moreover, $E$ can be chosen such that the density character of $E$ is dominated by that of $G$; see \cite{GarThi14pre:ReprBAlg}.
In particular, if $G$ is second-countable, then $L^1(G)$ is a separable Banach algebra and consequently $F_\mathcal{E}(G)$ can be isometrically represented on a separable Banach space in $\mathcal{E}$.

(3)
Group representations on $\QSL^p$-spaces have been studied by Volker Runde in \cite{Run05ReprQSL}.
For a locally compact group $G$, he defined the algebra $\UPF_p(G)$ of \emph{universal $p$-pseudofunctions} as follows.
With $\pi\colon L^1(G)\to\B(E)$ denoting a contractive, nondegenerate representation on a $\QSL^p$-space $E$ with the property that any other contractive, nondegenerate representation of $L^1(G)$ on a $\QSL^p$-space is isometrically conjugate to a subrepresentation of $\pi$, the algebra $\UPF_p(G)$ is the closure of $\pi(L^1(G))$ in $\B(E)$ (Definition~6.1 in \cite{Run05ReprQSL}).

It is straightforward to check that there is a canonical isometric isomorphism $\UPF_p(G)\cong F^p_\subQS(G)$.
\end{rmks}

%==========================================================================================
Note that $\classL^2$ is precisely the class of all Hilbert spaces.
Since subspaces and quotients of Hilbert spaces are again Hilbert spaces, we have
$\classQSL^2=\classQL^2=\classSL^2=\classL^2$.
This easily implies the following result.

%==========================================================================================
\begin{lma}
\label{prp:F2}
Let $G$ be a \lcg, and let $f\in L^1(G)$.
Then
\[
\|f\|_{\classQSL^2}=\|f\|_{\classQL^2}=\|f\|_{\classSL^2}=\|f\|_{\classL^2}.
\]
It follows that there are canonical, isometric isomorphisms
\[
F^2_\subQS(G) \cong F^2_\subQ(G) \cong F^2_\subS(G) \cong F^2(G).
\]
\end{lma}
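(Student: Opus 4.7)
The plan is to reduce the four classes $\classL^2, \classSL^2, \classQL^2, \classQSL^2$ to a single class, namely the class of all Hilbert spaces. First I would observe that any $L^2$-space $L^2(X,\mu)$ is a Hilbert space under the standard inner product, and conversely any Hilbert space $H$ is isometrically isomorphic to $\ell^2(I) = L^2(I, \text{counting measure})$, where $I$ is any orthonormal basis. Hence $\classL^2$ coincides with the class of all Hilbert spaces.

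Next I would verify that the class of Hilbert spaces is closed under taking closed subspaces and quotients. A closed subspace of a Hilbert space inherits the inner product and is therefore a Hilbert space. For quotients, given a closed subspace $F$ of a Hilbert space $H$, the quotient $H/F$ is isometrically isomorphic to the orthogonal complement $F^\perp$ via the map sending $x + F$ to the projection of $x$ onto $F^\perp$; since $F^\perp$ is closed, this is again a Hilbert space. (The completeness of $H$ is used precisely here to guarantee the existence of the orthogonal projection.) This yields the chain of inclusions
\[
\classL^2 \subseteq \classSL^2 \subseteq \classQSL^2 \subseteq \classL^2
\quad\text{and}\quad
\classL^2 \subseteq \classQL^2 \subseteq \classQSL^2 \subseteq \classL^2,
\]
so all four classes agree.

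Once the four classes of Banach spaces coincide, the corresponding classes $\Rep_\mathcal{E}(G)$ of non-degenerate representations of $L^1(G)$ coincide as well, and by \autoref{df:universalCompletion} the four seminorms $\|\cdot\|_{\classL^2}, \|\cdot\|_{\classSL^2}, \|\cdot\|_{\classQL^2}, \|\cdot\|_{\classQSL^2}$ are literally equal as seminorms on $L^1(G)$. The isometric isomorphisms $F^2(G) \cong F^2_\subS(G) \cong F^2_\subQ(G) \cong F^2_\subQS(G)$ then fall out directly from the construction of these completions as $L^1(G)/I_\mathcal{E}$ completed in the respective seminorm.

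I do not anticipate any real obstacle; this lemma is essentially a bookkeeping consequence of the fact that Hilbert spaces form a self-dual category closed under the two operations in play. The only point deserving attention is the quotient step, which depends on the existence of orthogonal complements and is what genuinely distinguishes $p=2$ from all other values of $p$.
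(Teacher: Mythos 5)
Your proposal is exactly the paper's argument: the paper states immediately before the lemma that $\classL^2$ is the class of Hilbert spaces and that subspaces and quotients of Hilbert spaces are again Hilbert spaces, so $\classQSL^2=\classQL^2=\classSL^2=\classL^2$, and leaves the rest as immediate. You have simply filled in the details (orthonormal bases, orthogonal complements for the quotient step) that the paper omits, and your conclusion via equality of the seminorms is the intended one.
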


%==========================================================================================
The algebra $F^2(G)$ is in fact a \ca, called the \emph{full group \ca{}} of $G$,
and it is usually denoted $C^*(G)$.

The universal completions from \autoref{notation} also agree when $p=1$, in which case they are all equal
to $L^1(G)$:

%==========================================================================================
\begin{prop}
\label{prp:F1}
Let $G$ be a \lcg, and let $f\in L^1(G)$.
Then
\[
\|f\|_{\classQSL^1}=\|f\|_{\classQL^1}=\|f\|_{\classSL^1}=\|f\|_{\classL^1}=\|f\|_1.
\]
It follows that there are canonical, isometric isomorphisms
\[
F^1_\subQS(G) \cong F^1_\subQ(G) \cong F^1_\subS(G) \cong F^1(G) \cong L^1(G).
\]
\end{prop}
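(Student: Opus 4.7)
The plan is to pinch all four seminorms between $\|\cdot\|_1$ and itself, using the fact that on the one hand any representation in $\Rep_\mathcal{E}(G)$ is contractive by definition, and on the other hand the left regular representation of $L^1(G)$ on itself already attains the $L^1$-norm.

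First, I would observe the trivial inequalities: since a nondegenerate contractive representation $\pi\colon L^1(G)\to\B(E)$ satisfies $\|\pi(f)\|\leq \|f\|_1$ for every $f\in L^1(G)$, we immediately get
\[
\|f\|_{\classQSL^1}\leq \|f\|_1,
\]
and the same upper bound holds for the three other classes $\classL^1,\classSL^1,\classQL^1$.

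Next, I would produce a single representation on an honest $L^1$-space whose norm on $f$ equals $\|f\|_1$. The natural candidate is the left regular representation $\lambda_1\colon L^1(G)\to \B(L^1(G))$ acting by left convolution. The space $L^1(G)$ belongs to $\classL^1$ by definition, and nondegeneracy of $\lambda_1$ is ensured by the existence of a contractive approximate identity $(e_i)$ in $L^1(G)$ (via \autoref{prop:integratedForm}, this corresponds to the left translation isometric representation of $G$ on $L^1(G)$). For any such $(e_i)$ one has $\|e_i\|_1\leq 1$ and $f\ast e_i\to f$ in $\|\cdot\|_1$, so
\[
\|\lambda_1(f)\|\geq \limsup_i \|\lambda_1(f)e_i\|_1 = \limsup_i \|f\ast e_i\|_1 = \|f\|_1.
\]
Combined with contractivity of $\lambda_1$, this gives $\|\lambda_1(f)\|=\|f\|_1$, and hence $\|f\|_{\classL^1}\geq \|f\|_1$.

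To chain the four norms together, I would then use the inclusions of classes $\classL^1\subseteq\classSL^1\subseteq\classQSL^1$ and $\classL^1\subseteq\classQL^1\subseteq\classQSL^1$ (the second inclusion in each chain uses that a $\QL^1$-space is trivially a $\QSL^1$-space). Larger classes of representations produce larger suprema, so
\[
\|f\|_1\leq \|f\|_{\classL^1}\leq \|f\|_{\classSL^1}\leq \|f\|_{\classQSL^1}\leq \|f\|_1,
\]
with the analogous chain through $\classQL^1$. All four seminorms therefore coincide with $\|\cdot\|_1$.

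The final statement about completions is then formal: since each seminorm equals $\|\cdot\|_1$, the corresponding ideal $I_\mathcal{E}$ is trivial and the quotient $L^1(G)/I_\mathcal{E}=L^1(G)$ is already complete in the $L^1$-norm, so $F^1_\subQS(G)=F^1_\subQ(G)=F^1_\subS(G)=F^1(G)=L^1(G)$ canonically and isometrically. There is no real obstacle here; the only point requiring a little care is the verification that $\lambda_1$ is nondegenerate and that it witnesses the lower bound $\|\lambda_1(f)\|\geq \|f\|_1$, which is precisely where the contractive approximate identity of $L^1(G)$ is used.
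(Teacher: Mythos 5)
Your proof is correct and follows essentially the same route as the paper: the upper bound comes from contractivity of all representations in $\Rep_{\mathcal{E}}(G)$, the lower bound from the left regular representation of $L^1(G)$ on itself together with a contractive approximate identity, and the remaining seminorms are pinched via the class inclusions. The only cosmetic difference is that the paper phrases the last step as the composition $L^1(G)\to F_{\mathcal{E}}(G)\to F^1(G)$ being the identity, which is exactly your chain of inequalities.
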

\begin{proof}
Let us denote by $\lambda_1\colon L^1(G)\to \B(L^1(G))$ the integrated form of the left regular representation.
Then $\lambda_1$ is the action of $L^1(G)$ on itself by left convolution.

Given $f$ in $L^1(G)$, it is clear that $\|f\|_{\classL^1}\leq \|f\|_1$.
For the reverse inequality, let $(e_d)_{d\in \Lambda}$ be a contractive approximate identity for $L^1(G)$.
Then
\[
\|\lambda_1(f)\|=\sup\left\{ \tfrac{\|f\ast\xi\|_1}{\|\xi\|_1} \colon \xi\in L^1(G),\xi\neq 0\right\}\geq \sup_{d\in \Lambda} \|f\ast e_d\|_1=\|f\|_1,
\]
so $\|f\|_1\leq \|\lambda_1(f)\|= \|f\|_{\classL^1}\leq \|f\|_1$.
It follows that the norms $\|\cdot\|_1$ and $\|\cdot\|_{\classL^1}$ agree, and thus $F^1(G)=L^1(G)$.

Let $\mathcal{E}$ be any of the classes $\classQSL^1$, $\classSL^1$, or $\classQL^1$.
It follows from the paragraph above that the composition $L^1(G)\to F_\mathcal{E}(G)\to F^1(G)$ equals
the identity map on $L^1(G)$.
The canonical map $F_\mathcal{E}(G)\to F^1(G)$ is therefore isometric, and the result follows.
\end{proof}

%==========================================================================================
We will later show that for most values of $p\in [1,\infty)$, the algebras $F_\subS^p(G)$, $F_\subQ^p(G)$, and $F^p(G)$ are canonically isometrically isomorphic, regardless of $G$; see \autoref{cor:QandSisom}.
When $G$ is amenable, we have $F_\subQS^p(G)=F_\subS^p(G)=F_\subQ^p(G)=F^p(G)$, regardless of $p$; see
\autoref{thm:AmenTFAE}.

%==========================================================================================
\begin{rmk}
\label{rem:poneReduced}
The proof of \autoref{prp:F1} also shows that the algebra of $1$-pseudofunctions $F^1_\lambda(G)$ on $G$ (see \autoref{df:reducedGroupAlgebra}) is canonically isometrically isomorphic to $L^1(G)$ as well.
However, for $p>1$, the analogous result holds if and only if $G$ is amenable; see \autoref{thm:AmenTFAE}.
\end{rmk}

%==========================================================================================
The following observation will allow us to define natural maps between the different universal completions.

%==========================================================================================
\begin{rmk}
\label{rmk:ExistenceCanonicalMaps}
Let $\mathcal{E}_1$ and $\mathcal{E}_2$ be classes of Banach spaces, and suppose that
$\mathcal{E}_1\subseteq \mathcal{E}_2$. Then
\[
\|f\|_{\mathcal{E}_1}
\leq \|f\|_{\mathcal{E}_2}
\]
for all $f\in L^1(G)$, and hence
the identity map on $L^1(G)$ induces a canonical contractive homomorphism $F_{\mathcal{E}_2}(G)
\to F_{\mathcal{E}_1}(G)$.
\end{rmk}

%==========================================================================================
\begin{nota}
\label{nota:kappaSameP}
Let $G$ be a \lcg, and let $p\in[1,\infty)$.
By \autoref{rmk:ExistenceCanonicalMaps}, the inclusions $\classL^p\subseteq\classSL^p\subseteq\classQSL^p$ and $\classL^p\subseteq\classQL^p\subseteq\classQSL^p$ induce canonical contractive homomorphisms between the corresponding universal completions.
We summarize the induced maps in the following commutative diagram:
\[
\xymatrix@=0.90em{
& { F^p_\subS(G) } \ar[dr]^{\kappa^p_\subS} \\
{ F^p_\subQS(G) } \ar[ur]^{\kappa^p_{\mathrm{QS},\mathrm{S}}}
\ar[dr]_{\kappa^p_{\mathrm{QS},\mathrm{Q}}}
& & { F^p(G). } \\
& { F^p_\subQ(G) } \ar[ur]_{\kappa^p_\subQ}
}
\]
We write $\kappa^p_\subQS$ for the composition $\kappa^p_\subS\circ\kappa^p_{\mathrm{QS},\mathrm{S}}$,
which also equals $\kappa^p_\subQ\circ\kappa^p_{\mathrm{QS},\mathrm{Q}}$.
Finally, when the H\"older exponent $p$ is clear from the context, we will drop it from the notation in the natural maps.
\end{nota}

%==========================================================================================
\begin{nota}
\label{nota:kappaPQ}
Let $G$ be a \lcg, and let $p,q\in[1,\infty)$.
If $p\leq q\leq 2$, \autoref{prop:LqSLp} implies that there is a canonical contractive homomorphism
\[
\kappa_{\classSL^p,\classL^q}\colon F^p_\subS(G)\to F^q(G)
\]
with dense range.
Likewise, for $2\leq r\leq s$, there is a canonical contractive homomorphism
\[
\kappa_{\classQL^s,\classL^r}\colon F^s_\subQ(G)\to F^r(G)
\]
with dense range.
\end{nota}

%==========================================================================================
\subsection{Duality}

Recall that if $A$ is a complex algebra, its \emph{opposite algebra}, denoted by $A^{op}$, is the
complex algebra whose underlying vector space structure is the same as for $A$, and the product of two elements $a$ and $b$ in $A^{op}$ is equal to $ba$.
When $A$ is moreover a Banach ($\ast$-)algebra, we take $A^{op}$ to carry the same norm (and involution) as $A$.

An \emph{anti-homomorphism} $\varphi\colon A\to B$ between algebras $A$ and $B$ is a linear map
satisfying $\varphi(ab)=\varphi(b)\varphi(a)$ for all $a$ and $b$ in $A$.
Equivalently, $\varphi$ is a homomorphism $A\to B^{op}$.
Similar terminology and definitions apply to other objects such as topological groups.

%==========================================================================================
\begin{lma}
\label{lma:Gop}
Let $G$ be a \lcg.
Then there is a canonical isometric isomorphism $L^1(G^{op})\cong L^1(G)^{op}$.
Furthermore, if $\mathcal{E}$ is a class of Banach spaces, then there is a canonical isometric isomorphism
$F_\mathcal{E}(G^{op})\cong F_\mathcal{E}(G)^{op}$.
\end{lma}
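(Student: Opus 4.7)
The approach proceeds in two parts, matching the two assertions.

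For the first, I would define the candidate map $\Psi\colon L^1(G^{op}) \to L^1(G)$ by
\[
\Psi(h)(s) = \Delta(s^{-1})\,h(s),
\]
where $\Delta$ denotes the modular function of $G$. The factor $\Delta(s^{-1})$ accounts for the fact that the left Haar measure $d\nu$ of $G^{op}$ coincides with the right Haar measure of $G$, which is related to the left Haar measure $d\mu$ by $d\nu(s) = \Delta(s^{-1})\,d\mu(s)$. A direct substitution gives $\|\Psi(h)\|_1 = \|h\|_{L^1(G^{op})}$, so $\Psi$ is a Banach-space isometry. To verify the anti-homomorphism property $\Psi(h_1 *_{op} h_2) = \Psi(h_2) * \Psi(h_1)$, I would write $(h_1 *_{op} h_2)(s) = \int h_1(t)\,h_2(st^{-1})\,d\nu(t)$, make the change of variables $t\mapsto t^{-1}$ (converting the $d\nu$-integral into a $d\mu$-integral and picking up an $h_1(t^{-1})\,h_2(st)$ integrand), and then substitute $r = st$ using the left-invariance of $\mu$; this reduces the integral to $(h_2 * h_1)(s)$, while the prefactor $\Delta(s^{-1})$ also appears correctly on the other side. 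Hence $\Psi$ is an isometric isomorphism $L^1(G^{op}) \to L^1(G)^{op}$.

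For the second part, I would extend $\Psi$ to the universal completions. By \autoref{prop:integratedForm}, representations of $L^1(H)$ on a Banach space $E$ correspond bijectively to isometric representations $H \to \Isom(E)$. The key observation is that on any fixed $E$, the assignment $\sigma(s) = \rho(s^{-1})$ sets up a natural bijection between isometric representations of $G$ on $E$ and isometric representations of $G^{op}$ on $E$, which restricts to a bijection $\Rep_\mathcal{E}(G) \leftrightarrow \Rep_\mathcal{E}(G^{op})$. Computing the integrated form of $\sigma$, substituting $t = s^{-1}$, and using $d\nu(s) = \Delta(s^{-1})\,d\mu(s)$, one obtains an identity relating $\pi_\sigma(h)$ to an integral involving $\Psi(h)$ and $\rho$, from which the seminorms defining $F_\mathcal{E}(G^{op})$ and $F_\mathcal{E}(G)^{op}$ are seen to correspond under $\Psi$. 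The extension of $\Psi$ then gives the desired canonical isometric isomorphism $F_\mathcal{E}(G^{op}) \cong F_\mathcal{E}(G)^{op}$.

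The main obstacle is the delicate bookkeeping of the modular function throughout: one has to match the $\Delta$-twist built into $\Psi$ against the $\Delta$-factors arising from the Haar-measure change of variables when intertwining the representations of $G$ with those of $G^{op}$, so that every appearance of $\Delta$ cancels at the right place. If any factor of $\Delta$ is misplaced the extension fails to be isometric; conversely, once this matching is confirmed, no additional hypothesis on $\mathcal{E}$ (such as closure under duals) should be needed.
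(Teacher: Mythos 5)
Your first assertion is proved correctly: $\Psi(h)(s)=\Delta(s^{-1})h(s)$ is an isometric anti-isomorphism $L^1(G^{op})\to L^1(G)$, and your bookkeeping of the modular function is sound; the $\Delta$-twist is in fact the right normalization to make the map isometric (the paper's displayed computation works with the plain identity map, which is anti-multiplicative but isometric only in the unimodular case). The paper dismisses the second assertion as ``straightforward,'' so there is little to compare there; but your sketch of it contains a genuine gap.

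The gap is in the step where you pass from the correspondence $\sigma(s)=\rho(s^{-1})$ to the claim that the seminorms correspond under $\Psi$. Carrying out the computation you describe gives
\[
\pi_\sigma(h)\;=\;\int_G h(s)\,\rho(s^{-1})\,d\nu(s)\;=\;\int_G \Psi(h)(s)\,\rho(s^{-1})\,d\mu(s),
\]
and the right-hand side is \emph{not} $\pi_\rho(\Psi(h))=\int_G\Psi(h)(s)\rho(s)\,d\mu(s)$: after the substitution $s\mapsto s^{-1}$ it equals $\pi_\rho\bigl(\Psi(h)^\sharp\bigr)=\pi_\rho(h^\vee)$, where $h^\vee(s)=h(s^{-1})$. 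Since $h\mapsto h^\vee$ is induced by the group isomorphism $s\mapsto s^{-1}$ from $G$ to $G^{op}$, it is multiplicative, so what your correspondence actually produces is the isometric \emph{isomorphism} $F_\mathcal{E}(G^{op})\cong F_\mathcal{E}(G)$ of \autoref{rem:duality}, not the isomorphism onto $F_\mathcal{E}(G)^{op}$ asserted in the lemma. To bridge the two you would need $\|f^\sharp\|_{\mathcal{E}}=\|f\|_{\mathcal{E}}$ for all $f\in L^1(G)$, and, contrary to your closing remark, this is a real hypothesis on $\mathcal{E}$ rather than bookkeeping. Indeed, take $\mathcal{E}=\classL^p$ with $p\neq 2$ and $G$ finite non-abelian: then $\|f\|_{\classL^p}=\|\lambda_p(f)\|_p$ by \autoref{thm:AmenTFAE} (finite groups are amenable), while $\|f^\sharp\|_{\classL^p}=\|\lambda_p(f^\sharp)\|_p=\|\lambda_{p'}(f)\|_{p'}$ by \autoref{prop:dualityReduced}, and Herz's asymmetry theorem (quoted right after \autoref{prop:dualityReduced}) produces $f$ for which $\|\lambda_p(f)\|_p\neq\|\lambda_{p'}(f)\|_{p'}$. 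So the seminorm identity your argument needs fails for the most important class $\classL^p$. The alternative correspondence $\sigma(s)=\rho(s)'$ acting on $E'$ does preserve norms and does reverse multiplication, but it trades $\mathcal{E}$ for $\mathcal{E}'$, which is precisely why some closure of $\mathcal{E}$ under duals cannot be avoided here.
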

\begin{proof}
We fix a left Haar measure $\mu$ on $G$, and we denote by $\nu$ the right Haar measure on $G$ given
by $\nu(E)=\mu(E^{-1})$ for every Borel set $E\subseteq G$.
Then $\nu$ is a left Haar measure for $G^{op}$.
It is immediate that inversion on $G$, when regarded as a map $(G,\mu)\to (G^{op},\nu)$, is a measure-preserving group isomorphism.
We denote by $\ast$ the operation of convolution on $L^1(G)$, and by $\ast_{op}$ the operation of convolution on $L^1(G^{op})$ (which is performed with respect to $\nu$).
Similarly, we denote by $\cdot$ the product in $G$, and by $\cdot_{op}$ the product in $G^{op}$.

Given $f$ and $g$ in $L^1(G^{op})$, and given $s\in G^{op}$, we have
\begin{align*}
(f\ast_{op} g)(s)&=\int_G f(t)g(t^{-1}\cdot_{op}s) \ d\nu(t)\\
&=\int_G f(t)g(s\cdot t^{-1}) \ d\nu(t)\\
&=\int_G f(t^{-1})g(s\cdot t) \ d\mu(t)\\
&=(g\ast f)(s).
\end{align*}
It follows that the identity map on $L^1(G)$ induces a canonical isometric anti-isomorphism $L^1(G^{op})\to L^1(G)$, and the result follows.

The last claim is straightforward.
\end{proof}

%==========================================================================================
\begin{rmk}
\label{rem:duality}
We mention, without proof, two easy facts that will be used in the proof of \autoref{prop:DualityUniv}. First, if $\mathcal{E}$ is a class of Banach spaces, and $G$ and $H$ are locally compact groups, then any isomorphism $G\to H$ induces an isometric isomorphism $F_\mathcal{E}(H)\to F_\mathcal{E}(G)$ (the argument is similar to, and simpler than, the one used in the proof of Proposition~2.4 in \cite{GarThi14arX:Functoriality}).
Second, with the same notation, inversion on $G$ defines an isomorphism $G\to G^{op}$, so there are canonical isomorphisms
\[
F_\mathcal{E}(G)\cong F_\mathcal{E}(G^{op})\cong F_\mathcal{E}(G)^{op},
\]
where the second isomorphism is the one given by \autoref{lma:Gop}.
\end{rmk}

%==========================================================================================
\begin{prop}
\label{prop:DualityUniv}
Let $G$ be a \lcg, let $p\in (1,\infty)$, and let $p'$ be its conjugate exponent.
Then there are canonical isometric isomorphisms
\[
F^p_\subQS(G)
\cong F^{p'}_\subQS(G),\quad
F^p_\subQ(G)
\cong F^{p'}_\subS(G),\quad \mbox{ and } \quad
F^p(G)
\cong F^{p'}(G).
\]
\end{prop}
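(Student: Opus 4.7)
The plan is to show that Banach-space duality provides a norm-preserving correspondence between isometric representations of $G$ on spaces in a given class $\mathcal{E}$ and isometric representations of $G^{op}$ on spaces in a ``dual class'' $\mathcal{E}'$. The three relevant pairs here are $(\classL^p, \classL^{p'})$, $(\classQL^p, \classSL^{p'})$ (equivalently $(\classSL^p, \classQL^{p'})$), and $(\classQSL^p, \classQSL^{p'})$.

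First, I would verify that these classes are genuinely interchanged by passing to duals. Using $(L^p(X,\mu))' \cong L^{p'}(X,\mu)$ together with the standard identifications $(E/F)' \cong F^\perp$ and $F' \cong E'/F^\perp$ for a closed subspace $F \subseteq E$, one checks $E \in \classL^p \Rightarrow E' \in \classL^{p'}$, $E \in \classSL^p \Rightarrow E' \in \classQL^{p'}$, $E \in \classQL^p \Rightarrow E' \in \classSL^{p'}$, and $E \in \classQSL^p \Rightarrow E' \in \classQSL^{p'}$; the last uses the alternative description of $\classQSL^p$-spaces as closed subspaces of $\classQL^p$-spaces noted after \autoref{df:importantClasses}. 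For $p \in (1,\infty)$, every space in $\classQSL^p$ is reflexive, so $(E')' \cong E$ canonically and these constructions are mutually inverse.

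Next, given a nondegenerate contractive representation $\pi\colon L^1(G) \to \B(E)$ with $E \in \mathcal{E}$, I observe that $f \mapsto \pi(f)'$ is a contractive anti-homomorphism $L^1(G) \to \B(E')$, or equivalently (via the identification $L^1(G^{op}) \cong L^1(G)^{op}$ of \autoref{lma:Gop}, which is the identity on underlying functions) a contractive homomorphism $\pi^\vee\colon L^1(G^{op}) \to \B(E')$. Writing $\pi = \pi_\rho$ for an isometric representation $\rho\colon G \to \Isom(E)$, the map $\tilde\rho\colon G^{op} \to \Isom(E')$, $s \mapsto \rho(s)'$, is easily seen to be a group homomorphism whose integrated form is $\pi^\vee$. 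The main technical point is the strong continuity of $\tilde\rho$: this follows from reflexivity of $E$ by a standard argument combining weak-$\ast$ continuity (immediate from strong continuity of $\rho$) with uniform boundedness. In particular, $\pi^\vee$ is nondegenerate, and applying the construction twice returns $\pi$ under $(E')' \cong E$, so $\pi \mapsto \pi^\vee$ is a bijection $\Rep_\mathcal{E}(G) \to \Rep_{\mathcal{E}'}(G^{op})$ satisfying $\|\pi^\vee(f)\| = \|\pi(f)'\| = \|\pi(f)\|$.

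Taking suprema yields $\|f\|_\mathcal{E} = \|f\|_{\mathcal{E}', G^{op}}$, where on the right $f$ is regarded as an element of $L^1(G^{op})$ via the identity on functions. Since this identification is an isometric anti-isomorphism of Banach algebras, it descends to an isometric anti-isomorphism $F_\mathcal{E}(G) \to F_{\mathcal{E}'}(G^{op})$. Composing with the canonical isomorphism $F_{\mathcal{E}'}(G^{op}) \cong F_{\mathcal{E}'}(G)^{op}$ from \autoref{lma:Gop}, I obtain an isometric anti-isomorphism $F_\mathcal{E}(G) \to F_{\mathcal{E}'}(G)^{op}$, which is precisely an isometric isomorphism of Banach algebras $F_\mathcal{E}(G) \cong F_{\mathcal{E}'}(G)$. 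Specializing to $\mathcal{E} = \classL^p$, $\classQL^p$, $\classQSL^p$ gives the three desired isomorphisms, and I expect the strong continuity of $\tilde\rho$ to be the only non-formal step in the argument.
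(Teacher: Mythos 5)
Your proposal is correct and follows essentially the same route as the paper: dualize representations, note that $\|\pi(f)'\|=\|\pi(f)\|$, identify the dual classes $(\classL^p)'=\classL^{p'}$, $(\classQL^p)'=\classSL^{p'}$, $(\classQSL^p)'=\classQSL^{p'}$, and absorb the resulting reversal of multiplication via the isomorphism $F_{\mathcal{E}'}(G)\cong F_{\mathcal{E}'}(G)^{op}$ coming from inversion on $G$ (\autoref{lma:Gop} and \autoref{rem:duality}). You simply make explicit several points the paper leaves implicit (strong continuity and nondegeneracy of the dualized representation via reflexivity, and the bijectivity of $\pi\mapsto\pi^\vee$ needed to equate the two suprema), which is a reasonable elaboration rather than a different argument.
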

\begin{proof}
Let $\mathcal{E}$ be any class of Banach spaces, and denote by $\mathcal{E}'$ the class
of those Banach spaces that are dual to a Banach space in $\mathcal{E}$.
For $\pi$ in $\mathrm{Rep}_\mathcal{E}(G)$ and $f$ in $L^1(G)$, we have
\[
\|(\pi(f))'\|=\|\pi(f)\|.
\]
Since taking adjoints reverses multiplication of operators, it follows that the identity map on $L^1(G)$ induces a canonical isometric isomorphism $F_\mathcal{E}(G)\cong F_{\mathcal{E}'}(G)^{op}$.
Upon composing this isomorphism with the isomorphism $F_{\mathcal{E}'}(G)\cong F_{\mathcal{E}'}(G)^{op}$
described in \autoref{rem:duality}, we obtain a canonical isometric
isomorphism
\[
F_\mathcal{E}(G)\cong F_{\mathcal{E}'}(G).
\]

To finish the proof, it is enough to observe that for $p\in (1,\infty)$, there are natural identifications
\[
(\classQSL^p)'=\classSQL^{p'}=\classQSL^{p'},\
(\classQL^p)'=\classSL^{p'},\
(\classSL^p)'=\classQL^{p'},\ \mbox{ and } \
(\classL^p)'=\classL^{p'}.
\]
\end{proof}

%==========================================================================================
\subsection{Canonical maps \texorpdfstring{$F^p(G)\to F^q(G)$}{Fp(G)->Fq(G)}}

%==========================================================================================
In this subsection, we will construct, for any \lcg{} $G$, a natural contractive map $F^p(G)\to F^q(G)$ 
with dense range, whenever $1\leq p\leq q\leq 2$ or $2\leq q\leq p<\infty$.

The construction of these maps takes considerable work, since $L^q$-spaces are never $L^p$-spaces, except in trivial cases.
However, it is often the case that an $L^q$-space is a subspace of an $L^p$-space (see \autoref{prop:LqSLp}).
To take advantage of this fact, we need to study extensions of isometries from subspaces of $L^p$-spaces to $L^p$-spaces; see \autoref{thm:Extension}.
Our argument is based on ideas used by Hardin (\cite{Har81IsometriesSubLp}).

The following definition is due to Hardin.

%==========================================================================================
\begin{df}
Let $(X,\mathfrak{A},\mu)$ be a $\sigma$-finite measure space, let $p\in [1,\infty)$ and
let $f_0 \in L^p(X,\mu)$.
Define the \emph{support} of $f_0$ to be
\[
\supp(f_0) = \left\{ x\in X \colon f_0(x)\neq 0 \right\}.
\]
Note that $\supp(f_0)$ is well-defined up to null sets.
If $F$ is a closed subspace of $L^p(X,\mu)$, we say that $f_0$ has \emph{full support in $F$} if
\[
\mu\left(\supp(f)\setminus\supp(f_0)\right)=0
\]
for all $f\in F$.
\end{df}

%==========================================================================================
The following terminology and notations will be convenient.

%==========================================================================================
\begin{df}
Let $F$ be a Banach space.
We denote by $\Isom(F)$ the group of surjective, linear isometries of $F$, and we equip it with the strong operator topology.
In this topology, a net $(u_d)_{d\in \Lambda}$ in $\Isom(F)$ converges to $u\in\Isom(F)$ if and only if $\lim\limits_{d\in \Lambda}\| u_d(\xi) - u(\xi) \|=0$ for every $\xi\in F$.
We call $\Isom(F)$ the \emph{isometry group} of $F$.

If $F$ is a closed subspace of another Banach space $\widetilde{F}$, we let $\Isom(\widetilde{F},F)$ denote the subgroup of $\Isom(\widetilde{F})$ consisting of those isometries that leave $F$ invariant.
\end{df}

%==========================================================================================
The next result asserts that if $p$ is not a multiple of 2 greater than 2, then
for every separable $\SL^p$-space $F$, there exists an $L^p$-space $\widetilde{F}$ containing $F$, such that every invertible isometry on $F$ can be extended to an invertible isometry on $\widetilde{F}$.
Note that this is stronger than the statement that every isometry on a separable $\SL^p$-space can be extended to an isometry on \emph{some} $L^p$-space.

%==========================================================================================
\begin{thm}
\label{thm:Extension}
Let $p\in[1,\infty)\setminus \{4,6,\ldots\}$, and let $F$ be a separable $\SL^p$-space.
Then there exists a separable $L^p$-space $\widetilde{F}$ such that $F$ is isometrically isomorphic to a subspace of $\widetilde{F}$, and such that every surjective, linear isometry on $F$ can be extended to a surjective, linear isometry on $\widetilde{F}$.
Moreover, the restriction map
\[
\varphi\colon \Isom(\widetilde{F},F) \to \Isom(F),
\]
is a surjective homeomorphism.
\end{thm}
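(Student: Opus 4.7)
The plan is to apply Hardin's theorem on isometric extensions in a carefully chosen concrete model of $F$. Since $F$ is a separable $\SL^p$-space, I would first fix an isometric embedding $F\hookrightarrow L^p(X,\mathfrak{A},\mu)$ into a separable $L^p$-space over a $\sigma$-finite measure space. Next, I would exhibit an element $f_0\in F$ of full support in $F$: choose a countable norm-dense subset $(g_n)_{n\in\N}$ of $F\setminus\{0\}$ and set $f_0:=\sum_{n=1}^\infty 2^{-n} g_n/\|g_n\|_p$. Then $\supp(f_0)=\bigcup_n\supp(g_n)$ modulo null sets, and if some $f\in F$ were nonzero on a positive-measure set disjoint from $\supp(f_0)$, then every $g_n$ would vanish there, forcing $\|f-g_n\|_p$ to be bounded below by a positive constant, contradicting density.

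Set $Y:=\supp(f_0)$ and let $\Sigma$ denote the sub-$\sigma$-algebra of $\mathfrak{A}|_Y$ generated by $\{f|_Y : f\in F\}$. I would then take
\[
\widetilde F := L^p(Y,\Sigma,\mu|_Y),
\]
which sits naturally inside $L^p(X,\mu)$ as a closed subspace containing $F$. Separability of $F$ ensures that $\Sigma$ is countably generated modulo null sets, so $\widetilde F$ is separable.

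Hardin's theorem from \cite{Har81IsometriesSubLp} (applicable because $p\notin\{4,6,8,\ldots\}$) asserts that every linear isometry $T\colon F\to L^p(X,\mu)$ admits a unique extension to a linear isometry $\widetilde T\colon\widetilde F\to L^p(X,\mu)$. For $T\in\Isom(F)$ we have $T(F)=F$, so applying the construction to $T(F)$ recovers $\widetilde F$; hence $\widetilde T$ takes values in $\widetilde F$. Applying the same to $T^{-1}$ and invoking the uniqueness clause for the identities $T^{-1}T=\id_F$ and $TT^{-1}=\id_F$ shows $\widetilde T\in\Isom(\widetilde F,F)$ with inverse $\widetilde{T^{-1}}$. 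Uniqueness of extensions also yields $\widetilde{ST}=\widetilde S\widetilde T$, so the assignment $\psi\colon\Isom(F)\to\Isom(\widetilde F,F)$, $T\mapsto\widetilde T$, is a group homomorphism and a two-sided inverse of the restriction map $\varphi$; in particular $\varphi$ is a bijective group homomorphism.

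Finally, $\varphi$ is obviously continuous in the strong operator topology. Since $F$ and $\widetilde F$ are separable, $\Isom(F)$ and $\Isom(\widetilde F)$ are Polish groups, and $\Isom(\widetilde F,F)$ is a closed subgroup of $\Isom(\widetilde F)$, hence Polish. The open mapping theorem for Polish groups then ensures that the continuous bijective homomorphism $\varphi$ is a homeomorphism. I expect the main obstacle to be the choice of $f_0$ and the verification that the space $\widetilde F$ constructed from it contains $F$ and is preserved under the Hardin extensions of isometries; once this reduction is in place, Hardin's theorem does the heavy lifting, and the Polish-group argument handles the topology without ever needing to unpack the construction of the extension.
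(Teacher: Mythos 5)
Your overall strategy (find a full-support element, pass to a canonical enlargement of $F$ inside $L^p(X,\mu)$, invoke Hardin's extension theorem together with its uniqueness clause to get a group isomorphism, then use Polish-group automatic openness for the topology) is the same as the paper's, and your Polish-group argument at the end is a legitimate, slightly cleaner alternative to the paper's route via measurability of $\varphi^{-1}$ plus automatic continuity. However, there is a genuine gap in the identification of $\widetilde F$. You take $\widetilde F = L^p(Y,\Sigma,\mu|_Y)$ with $\Sigma$ the $\sigma$-algebra generated by the functions $f|_Y$, $f\in F$. That is not the space to which Hardin's theorem extends isometries. The correct object is built from the \emph{ratio} $\sigma$-algebra $\mathfrak B=\sigma\bigl(\{f/f_0\colon f\in F\}\bigr)$ together with the re-weighted measure $|f_0|^p\mu$, namely $\widetilde F=f_0\cdot L^p(Y,\mathfrak B,|f_0|^p\mu)$; one divides by $f_0$ to land in a subspace of $L^p(\mathfrak B,|f_0|^p\mu)$ containing the constant $1$, applies Hardin's Theorem~2.2 there, and multiplies back by $u(f_0)$. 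Since $f_0$ itself need not be $\mathfrak B$-measurable, your $\Sigma$ is in general strictly larger than $\mathfrak B$, and $L^p(\Sigma,\mu)$ is strictly larger than the correct $\widetilde F$: already for $F=\C f_0$ with $f_0=\chi_{[0,1/2]}+2\chi_{(1/2,1]}$ the correct $\widetilde F$ is the one-dimensional space $\C f_0$, whereas your $L^p(\sigma(f_0),\mu)$ is two-dimensional, and an isometry $\C f_0\to L^p(\nu)$ has no canonical (let alone unique) extension to it. Consequently, for your $\widetilde F$ neither the existence nor the uniqueness of the extension is supplied by Hardin's theorem, and uniqueness is exactly what your argument leans on to show that $\widetilde T$ preserves $\widetilde F$, that $T\mapsto\widetilde T$ is multiplicative, and that it inverts the restriction map.

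A secondary, smaller issue: your explicit candidate $f_0=\sum_n 2^{-n}g_n/\|g_n\|_p$ need not have $\supp(f_0)=\bigcup_n\supp(g_n)$, since the series can cancel on a set of positive measure; the conclusion you want (nonemptiness of the set of full-support elements) is Lemma~3.2 of Hardin's paper and requires a genuine argument (e.g.\ a generic choice of coefficients), not just this one series. Your density argument in the second half of that paragraph also does not obviously yield a contradiction as stated. Both defects are repairable by citing Hardin's Lemma~3.2 directly, as the paper does.
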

\begin{proof}
The statement is trivial for $p=2$, since a closed subspace of a Hilbert space is a Hilbert space itself. We may
therefore assume that $p\in [1,\infty)\setminus\{2,4,6,\ldots\}$.

The proof in this case is based on the proof of Theorem~4.2 in \cite{Har81IsometriesSubLp}, and we use the same notation when possible.
Since $\sigma$-algebras will play an important role, they will not be omitted from the notation.

Let $(X,\mathfrak{A},\mu)$ be a $\sigma$-finite measure space such that $F$ can be identified with a closed subspace of $L^p(X,\mathfrak{A},\mu)$.
We let $\Full(F)$ denote the set of elements in $F$ that have full support in $F$.
It follows from Lemma~3.2 in \cite{Har81IsometriesSubLp} that $\Full(F)$ is nonempty.
Without loss of generality, we may assume that $\supp(f_0)=X$ for every $f_0\in\Full(F)$.

Let $f_0\in\Full(F)$.
Set
\[
Q(f_0) = \left\{ \tfrac{f}{f_0} \colon f\in F \right\},
\]
and let $\sigma(Q(f_0))$ denote the smallest $\sigma$-algebra on $X$ such that every function in $Q(f_0)$ is measurable with respect to $\sigma(Q(f_0))$.
Hardin showed in the proof of Lemma~3.4 in \cite{Har81IsometriesSubLp} that $\sigma(Q(f_0))$ does not depend on the choice of $f_0$ in $\Full(F)$. We will therefore write
$\mathfrak{B}$ for $\sigma(Q(f_0))$, which equals $\sigma(Q(g_0))$ for any other $g_0\in \Full(F)$.
Since every element of $Q(f_0)$ is $\mathfrak{A}$-measurable, we clearly have $\mathfrak{B}\subseteq \mathfrak{A}$.

It is not in general the case that the elements of $Q(f_0)$ are in $L^p(X,\mathfrak{B},\mu)$.
Instead, we shall consider the measure $|f_0|^p\mu$ on $(X,\mathfrak{B})$.
If $f\in F$, then
\[
\|f\|_p
= \left( \int_X |f|^p \ d\mu \right)^{\tfrac{1}{p}} = \left( \int_X \left| \tfrac{f}{f_0} \right|^p  |f_0|^p \ d\mu\right)^{\tfrac{1}{p}}<\infty.
\]
It follows that $Q(f_0)$ is a subset of $L^p\left(X,\mathfrak{B},|f_0|^p\mu\right)$, and that
the map
\[D_{f_0}\colon F\to L^p\left(X,\mathfrak{B},|f_0|^p\mu\right)\]
given by $D_{f_0}(f)=\tfrac{f}{f_0}$ for $f\in F$, is an isometry.
Set
\[
\widetilde{F}
= \left\{ f_0 h \colon h \in L^p\left(X,\mathfrak{B},|f_0|^p\mu\right) \right\}.
\]
By the proof of Theorem~4.2 in \cite{Har81IsometriesSubLp}, the space $\widetilde{F}$ does not depend on the choice of the element $f_0$ of full support.
Moreover, $F$ is a subspace of $\widetilde{F}$ and $D_{f_0}$ extends to an isometric isomorphism $\widetilde{F} \to L^p\left(X,\mathfrak{B},|f_0|^p\mu\right)$, which we also denote by $D_{f_0}$.

Let $u\in \Isom(F)$. We claim that $u$ can be canonically extended to a surjective, linear isometry $\widetilde{u}$ on $\widetilde{F}$.

Set $g_0=u(f_0)$, which belongs to $\Full(F)$ by Lemma~3.4 in \cite{Har81IsometriesSubLp}.
Then $u$ induces a linear isometry $v$ from the subspace $Q(f_0)$ of $L^p\left(X,\mathfrak{B},|f_0|^p\mu\right)$ to the subspace $Q(g_0)$ of $L^p\left(X,\mathfrak{B},|g_0|^p \mu\right)$.
The maps to be constructed are shown in the following commutative diagram:

\[
\xymatrix@!C{
F \ar@{^{(}->}[dr]  \ar[rr] \ar[dd]_(0.25){u} & & { Q(f_0) } \ar@{^{(}->}[dr] \ar@{->}|(0.5){\hole}[dd]_(0.25){v} \\
 & { \widetilde{F} } \ar[rr]^(0.25){D_{f_0}}_(0.25){\cong} \ar@{-->}[dd]_(0.25){\widetilde{u}}
& & { L^p(\mathfrak{B},|f_0|^p \mu) } \ar@{-->}[dd]^(0.25){\widetilde{v}}
\\
F \ar@{^{(}->}[dr] \ar@{->}|(0.5){\hole}[rr]
&& { Q(g_0) } \ar@{^{(}->}[dr]
\\
& { \widetilde{F} } \ar[rr]^(0.25){D_{g_0}}_(0.25){\cong}
& & { L^p(\mathfrak{B},|g_0|^p \mu) }
\\
}
\]

The constant function $1=\tfrac{f_0}{f_0}$ belongs to $Q(f_0)$, and also to $Q(g_0)$.
It is easy to see that $v$ satisfies $v(1)=1$.
Apply Theorem~2.2 in \cite{Har81IsometriesSubLp} to extend $v$
uniquely to a surjective, linear isometry $\widetilde{v}$ on $\widetilde{F}$.
The desired isometry $\widetilde{u}$ is then given by
\[
\widetilde{u} = D_{g_0}^{-1}\circ\widetilde{v}\circ D_{f_0}.
\]
The claim is proved.

The assignment $u\mapsto \widetilde{u}$ defines an inverse $\varphi$ to the restriction map,
so $\varphi$ is surjective.
It is clear that $\varphi$ is a group homomorphism, and it is easy to see that $\varphi$ is continuous.
It remains to show that $\varphi^{-1}$ is continuous.

We first describe the elements in $\widetilde{F}$.
Let $n\in\N$ and let $\xi\colon\C^n\to\C$ be a measurable map.
If $f_1,\ldots,f_n$ are elements of $F$, then the function
\[
h = \xi\left( \tfrac{f_1}{f_0},\ldots,\tfrac{f_n}{f_0} \right)
\]
is $\mathfrak{B}$-measurable.
Therefore, $f_0 h$ is an element of $\widetilde{F}$ whenever
\[
\int \left| f_0(x)h(x)   \right|^p d\mu(x) <\infty.
\]
It is not hard to check that elements of this form are dense in $\widetilde{F}$.
Finally, if $u\in\Isom(F)$, then its unique extension $\widetilde{u}$ to $\widetilde{F}$ satisfies
\[
\widetilde{u}(f_0h)
= u(f_0) \xi\left( \tfrac{u(f_1)}{u(f_0)},\ldots,\tfrac{u(f_n)}{u(f_0)} \right).
\]

It follows that the assignment $u\mapsto \widetilde{u}(f_0h)$, as a map
$\Isom(F) \to \widetilde{F}$, is measurable, and hence $\varphi^{-1}$ is measurable.

Note that $\Isom(F)$ and $\Isom(\widetilde{F})$ are Polish groups, since $F$ and $\widetilde{F}$ are
separable Banach spaces.
It follows that $\Isom(\widetilde{F},F)$ is a Polish group, being a closed subgroup of $\Isom(\widetilde{F})$.
Since $\varphi^{-1}$ is a measurable group homomorphism between Polish groups, it follows that
$\varphi^{-1}$ is continuous (see, for example, Theorem~9.10 in \cite{Kec95DescriptiveSetThy}).
The proof is finished.
\end{proof}

%==========================================================================================
\begin{cor}
\label{cor:IsomLpSubgp}
Let $p\in[1,\infty)\setminus \{4,6,\ldots\}$.
Then, the isometry group of every separable $\SL^p$-space is topologically isomorphic to a closed subgroup of the isometry group of a separable $L^p$-space.
\end{cor}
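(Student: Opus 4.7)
The plan is to deduce this corollary almost immediately from \autoref{thm:Extension}, whose hard work has already been done. Given a separable $\SL^p$-space $F$, I would apply that theorem to produce a separable $L^p$-space $\widetilde{F}$ containing $F$ as a closed subspace and a restriction map
\[
\varphi\colon \Isom(\widetilde{F},F) \to \Isom(F)
\]
which is a surjective homeomorphism of topological groups. Since $\Isom(\widetilde{F},F)$ sits inside $\Isom(\widetilde{F})$ as a subgroup by definition, the inverse $\varphi^{-1}$ realizes $\Isom(F)$ as a topologically isomorphic copy of a subgroup of $\Isom(\widetilde{F})$, and $\widetilde{F}$ is a separable $L^p$-space.

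The only genuine content left to verify is that $\Isom(\widetilde{F},F)$ is \emph{closed} in $\Isom(\widetilde{F})$ in the strong operator topology. For this, I would use that since $\widetilde{F}$ is separable, $\Isom(\widetilde{F})$ (with SOT) is a Polish topological group, so that inversion is continuous. Now suppose $(u_d)_{d\in\Lambda}$ is a net in $\Isom(\widetilde{F},F)$ converging to some $u\in\Isom(\widetilde{F})$. For any $\xi\in F$ one has $u_d(\xi)\in F$ for all $d$, and $u(\xi) = \lim_d u_d(\xi)$, so $u(\xi)\in F$ because $F$ is closed; hence $u(F)\subseteq F$. Applying the same argument to $u_d^{-1}\to u^{-1}$ (continuity of inversion) yields $u^{-1}(F)\subseteq F$, i.e.\ $F\subseteq u(F)$, so that $u(F)=F$ and $u\in\Isom(\widetilde{F},F)$.

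The closedness argument is the only mildly nontrivial point, and it is standard; everything else is direct bookkeeping on top of \autoref{thm:Extension}. No obstacle is expected.
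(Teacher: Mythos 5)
Your proposal is correct and matches the paper's (implicit) argument: the corollary is stated without proof precisely because it follows directly from \autoref{thm:Extension}, with the closedness of $\Isom(\widetilde{F},F)$ in $\Isom(\widetilde{F})$ already asserted inside the proof of that theorem. Your explicit verification of closedness (using continuity of inversion in the Polish group $\Isom(\widetilde{F})$ to get $u(F)=F$ rather than just $u(F)\subseteq F$) is the right way to fill in the one detail the paper leaves unstated.
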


%==========================================================================================
\begin{qst}
Does \autoref{cor:IsomLpSubgp} also hold in the nonseparable case?
That is, is the the isometry group of every $\SL^p$-space topologically isomorphic to a closed subgroup of the isometry group of an $L^p$-space?
\end{qst}

%==========================================================================================
We now turn to the construction of the maps $F^p(G)\to F^q(G)$.
When $G$ is second countable, their existence follows easily from the next corollary.
For general $G$, however, said corollary will be used as the base case of an induction argument, the inductive step being \autoref{prp:NormsSigmaCpctLcg}.

%==========================================================================================
\begin{cor}
\label{prp:Norms2CtblLcg}
Let $p\in[1,\infty)\setminus\{4,6,\ldots\}$, let $G$ be a second-countable, locally compact group, and let $f\in L^1(G)$.
Then $\|f\|_{\classSL^p}=\|f\|_{\classL^p}$.
\end{cor}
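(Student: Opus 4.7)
The plan is to show the nontrivial inequality $\|f\|_{\classSL^p} \leq \|f\|_{\classL^p}$ by lifting every isometric representation of $G$ on a separable $\SL^p$-space to an isometric representation on an $L^p$-space, in such a way that the integrated forms restrict correctly.

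First, I would reduce to separable $\SL^p$-spaces. By \autoref{rmk:FpG}(2), since $G$ is second-countable, $F^p_\subS(G)$ admits an isometric representation on a separable $\SL^p$-space. Therefore $\|f\|_{\classSL^p}$ equals the supremum of $\|\pi(f)\|$ over nondegenerate representations $\pi \colon L^1(G)\to\B(E)$ with $E$ a separable $\SL^p$-space, so it suffices to bound such a single $\|\pi(f)\|$ by $\|f\|_{\classL^p}$.

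Fix such a $\pi$, and use \autoref{prop:integratedForm} to write $\pi = \pi_\rho$ for a strongly continuous isometric representation $\rho\colon G\to\Isom(E)$. Apply \autoref{thm:Extension} to obtain a separable $L^p$-space $\widetilde{E}\supseteq E$ and a surjective homeomorphism
\[
\varphi\colon \Isom(\widetilde{E},E)\to \Isom(E).
\]
Let $\iota\colon \Isom(\widetilde E,E)\hookrightarrow \Isom(\widetilde E)$ denote inclusion, and define $\widetilde{\rho}\colon G \to \Isom(\widetilde E)$ by $\widetilde{\rho} = \iota \circ \varphi^{-1}\circ \rho$. Because $\varphi^{-1}$ is continuous (this is the crucial output of \autoref{thm:Extension}) and $\iota$ is a topological embedding, $\widetilde{\rho}$ is a strongly continuous group homomorphism, hence an isometric representation of $G$ on the $L^p$-space $\widetilde E$. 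Let $\widetilde{\pi} = \pi_{\widetilde{\rho}}$ be its integrated form.

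The key verification is that $\widetilde{\pi}(f)$ extends $\pi(f)$: by construction $\widetilde{\rho}_s|_E = \rho_s$ for every $s\in G$, so for $\xi\in E$ the Bochner integrals agree,
\[
\widetilde{\pi}(f)(\xi) = \int_G f(s)\widetilde{\rho}_s(\xi)\,ds = \int_G f(s)\rho_s(\xi)\,ds = \pi(f)(\xi),
\]
with the integral on the left landing in $E$ since $E$ is closed in $\widetilde E$. Consequently $\|\pi(f)\| \leq \|\widetilde{\pi}(f)\| \leq \|f\|_{\classL^p}$. Taking the supremum over $\pi$ yields $\|f\|_{\classSL^p}\leq \|f\|_{\classL^p}$, and the reverse inequality is immediate from $\classL^p\subseteq\classSL^p$ via \autoref{rmk:ExistenceCanonicalMaps}.

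The main obstacle has already been handled by \autoref{thm:Extension}: merely extending isometries pointwise would not give a strongly continuous group homomorphism into $\Isom(\widetilde E)$, so it is essential that the extension procedure $\varphi^{-1}$ be continuous (not just a set-theoretic inverse). This is exactly what the Polish-group measurability-implies-continuity argument in the proof of \autoref{thm:Extension} provides, and it is also the reason for the hypothesis that $G$ be second-countable at this stage.
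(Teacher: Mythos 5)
Your proposal is correct and follows essentially the same route as the paper's proof: reduce to a separable $\SL^p$-space via \autoref{rmk:FpG}(2), apply \autoref{thm:Extension} to extend the isometric representation to an $L^p$-space using the continuity of the inverse of the restriction map, and compare the integrated forms. The only cosmetic difference is that the paper fixes a single representation attaining $\|f\|_{\classSL^p}$ rather than taking a supremum over all separable ones, and your remarks on why continuity of the extension procedure and second-countability matter are exactly on point.
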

\begin{proof}
Since $\classL^p\subseteq\classSL^p$, we have $\|f\|_{\classL^p}\leq\|f\|_{\classSL^p}$; see also \autoref{rmk:ExistenceCanonicalMaps}.
By (2) in \autoref{rmk:FpG}, there exist a separable $\SL^p$-space $E$ and an isometric group representation $\rho\colon G\to\Isom(E)$ such that, with $\pi\colon L^1(G)\to\B(E)$ denoting the integrated form of $\rho$, we have
\[
\|f\|_{\classSL^p} = \|\pi(f)\|.
\]
By \autoref{thm:Extension}, there exist an $L^p$-space $\widetilde{E}$ containing $E$ as a closed subspace, and a topological group isomorphism
\[
\varphi\colon\Isom(E)\to\Isom(\widetilde{E},E).
\]
Consider the isometric group representation of $G$ on $\widetilde{E}$ given by
\[
\widetilde{\rho}=\varphi\circ\rho \colon G \to \Isom(\widetilde{E},E)\subseteq\Isom(\widetilde{E}).
\]
Let $\widetilde{\pi}\colon L^1(G)\to \B(\widetilde{E})$ be the integrated form of $\widetilde{\rho}$.

It is easy to see, using that $\widetilde{\rho}(s)$ leaves $E$ invariant for every $s\in G$, that $\widetilde{\pi}(f)$ leaves $E$ invariant as well.
Using this at the second step, we get
\begin{align*}
\| \pi(f) \|
& = \sup \left\{ \|\pi(f)\xi\|_p \colon \xi\in E, \|\xi\|_p\leq 1 \right\} \\
&\leq  \sup \left\{ \|\widetilde{\pi}(f)\xi\|_p \colon \xi\in\widetilde{E}, \|\xi\|_p\leq 1 \right\} \\
&=\| \widetilde{\pi}(f) \|.
\end{align*}
Therefore,
\[
\|f\|_{\classSL^p}
= \|\pi(f)\|
\leq \| \widetilde{\pi}(f) \|
\leq \sup \left\{ \| \tau(f) \| \colon \tau\in\Rep_G(\classL^p) \right\}
= \|f\|_{\classL^p},
\]
as desired.
\end{proof}

%==========================================================================================
\begin{lma}
\label{prp:NormsSigmaCpctLcg}
Let $p\in[1,\infty)\setminus\{4,6,\ldots\}$, let $G$ be a $\sigma$-compact, locally compact group, and let $f\in L^1(G)$.
Then $\|f\|_{\classSL^p}=\|f\|_{\classL^p}$.
\end{lma}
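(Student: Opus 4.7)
The plan is to bootstrap from the second-countable case (\autoref{prp:Norms2CtblLcg}) using the Kakutani-Kodaira theorem. Since $\classL^p\subseteq\classSL^p$ gives $\|f\|_{\classL^p}\leq\|f\|_{\classSL^p}$ for free, only the reverse inequality needs proof.

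First I will establish the equality for functions that are biinvariant under a fixed compact normal subgroup $N\subseteq G$ with $G/N$ second-countable (such $N$ exist by Kakutani-Kodaira). Normalize the Haar measure on $N$ to total mass $1$, and let $f\in L^1(G)$ be $N$-biinvariant; denote by $\bar f\in L^1(G/N)$ its image under the canonical surjection $L^1(G)\to L^1(G/N)$. Given an isometric representation $\rho\colon G\to\Isom(E)$ on an $\SL^p$-space $E$ with integrated form $\pi$, set $P_N=\int_N\rho(n)\,dn$. This is a contractive projection onto the closed subspace $E^N\subseteq E$ of $N$-fixed vectors, and normality of $N$ ensures that each $\rho(s)$ preserves $E^N$, so $\rho$ descends to an isometric representation $\tilde\rho\colon G/N\to\Isom(E^N)$. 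Weil's decomposition of the Haar integral on $G$ combined with right $N$-invariance of $f$ should yield the intertwining identity
\[
\pi(f)=\tilde\pi(\bar f)\,P_N,
\]
where $\tilde\pi$ is the integrated form of $\tilde\rho$. Since $E^N$ is an $\SL^p$-space (being a closed subspace of $E$), this gives $\|\pi(f)\|\leq\|\bar f\|_{\classSL^p,G/N}$. Applying \autoref{prp:Norms2CtblLcg} to the second-countable group $G/N$ gives $\|\bar f\|_{\classSL^p,G/N}=\|\bar f\|_{\classL^p,G/N}$, and pulling back $L^p$-representations of $G/N$ along the quotient $L^1(G)\to L^1(G/N)$ yields $\|\bar f\|_{\classL^p,G/N}\leq\|f\|_{\classL^p,G}$. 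Taking the supremum over all such $\pi$ produces $\|f\|_{\classSL^p,G}\leq\|f\|_{\classL^p,G}$ for $N$-biinvariant $f$.

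For a general $f\in L^1(G)$, I will approximate by biinvariant functions. Applying Kakutani-Kodaira once more, for each neighborhood $U$ of the identity in $G$ I pick a compact normal subgroup $N_U\subseteq U$ with $G/N_U$ second-countable. Letting $u_U$ be the normalized characteristic function of $N_U$, the element $u_U*f*u_U$ is $N_U$-biinvariant and converges to $f$ in $L^1$-norm as $U$ shrinks to $\{e\}$. Both seminorms $\|\cdot\|_{\classSL^p}$ and $\|\cdot\|_{\classL^p}$ are dominated by $\|\cdot\|_1$ and are therefore continuous on $L^1(G)$, so passing to the limit in the equality $\|u_U*f*u_U\|_{\classSL^p}=\|u_U*f*u_U\|_{\classL^p}$ completes the argument.

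The main obstacle will be the careful verification of the intertwining identity $\pi(f)=\tilde\pi(\bar f)P_N$: this requires normalizing the quotient Haar measure on $G/N$ so that the canonical quotient map $L^1(G)\to L^1(G/N)$ is compatible with Weil's integration formula, and checking that the restriction of $\rho$ to $E^N$ really does descend to an isometric representation of $G/N$ (rather than merely being trivial on $N$). Once this reduction is in place, everything else is a direct application of \autoref{prp:Norms2CtblLcg} together with standard continuity of the seminorms in the $L^1$-norm.
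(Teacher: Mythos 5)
Your proof is correct, and it rests on the same pillars as the paper's argument: the Kakutani--Kodaira theorem, the averaging projection $P_N$ onto the fixed-point subspace $E^N$ (which is again an $\SL^p$-space), the quotient homomorphism $T_N\colon L^1(G)\to L^1(G/N)$, and the second-countable base case \autoref{prp:Norms2CtblLcg}. The difference lies in how the passage to $G/N$ is controlled. The paper fixes one near-optimal representation $\pi$ and a near-maximizing vector $\xi_0$, chooses $N$ so small that $P_N$ moves $\pi(f)\xi_0$ by at most $\varepsilon/2$, and deduces $\|T_N(f)\|_{\classSL^p}\geq\|\pi(f)\|-\varepsilon$; this step requires a quantitative continuity estimate on $\rho$ near the identity (stated there, in fact, in operator norm, although only the strong-operator version on the single vector $\pi(f)\xi_0$ is needed or available). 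You instead prove the exact intertwining identity $\pi(f)=\tilde\pi(\bar f)P_N$ for $N$-right-invariant $f$ --- note that only right invariance is actually used, not biinvariance --- which yields $\|f\|_{\classSL^p}\leq\|f\|_{\classL^p}$ on the nose for such $f$, uniformly over all representations, and then you recover the general case from $L^1$-density of the functions $u_U\ast f\ast u_U$ together with the fact that both seminorms are dominated by $\|\cdot\|_1$. So you trade the paper's $\varepsilon$-perturbation of a single vector for an extra approximation layer in $L^1(G)$; the payoff is that no continuity estimate on $\rho$ is needed at all, and the points you flag for verification (Weil's formula under the normalization that makes $T_N$ the canonical quotient map, and the descent of $\rho|_{E^N}$ to $G/N$ via normality of $N$) do go through exactly as you predict.
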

\begin{proof}
We may assume that $\|f\|_1\leq 1$.
As in the proof of \autoref{prp:Norms2CtblLcg}, we have $\|f\|_{\classL^p}\leq\|f\|_{\classSL^p}$.
Let us show the reverse inequality.
By (2) in \autoref{rmk:FpG}, there exists a $\SL^p$-space $E$ and an isometric group representation $\rho\colon G\to\Isom(E)$ whose integrated form $\pi\colon L^1(G)\to\B(E)$ satisfies
\[
\|f\|_{\classSL^p} = \|\pi(f)\|.
\]
Let $\varepsilon>0$.
Choose $\xi_0$ in $E$ with $\|\xi_0\|=1$ such that
\begin{align}
\label{prp:NormsSigmaCpctLcg:eq1}
\|\pi(f)\xi_0\| \geq \|\pi(f)\| - \tfrac{\varepsilon}{2}.
\end{align}
Since $\rho$ is continuous, there exists an open neighborhood $V$ of the identity element $e$ in $G$ such that $\|\rho(s)-\id_E\|<\tfrac{\varepsilon}{2}$ for every $s\in V$.
By the Kakutani-Kodaira Theorem (see, for example, \cite[Theorem~8.7, p.71]{HewRos79AbstrHarmAna1}), there exists a compact, normal subgroup $N$ of $G$ such that $N\subseteq V$ and such that $G/N$ is second-countable.

We consider the fixed point subspace of $E$ for the (restricted) action of $N$ on $E$:
\[
E^N = \{\xi\in E\colon \rho(s)\xi=\xi \text{ for all } s\in N\}.
\]
With $\mu$ denoting the normalized Haar measure on $N$, define an averaging map $P_N\colon E\to E^N$ by
\[
P_N(\xi)= \int_N \rho(s)\xi\ d\mu(s)
\]
for all $\xi\in E$.
Then $P_N$ is contractive and linear.

For every $\eta$ in $E$ with $\|\eta\|\leq 1$, we have
\[
\|P_N(\eta)-\eta\|
=\left\| \int_N \left( \rho(s)\eta -\eta \right)\ d\mu(s)\right\| \leq \int_N \|\rho(s)-\mbox{id}_E\|\cdot\|\eta\| \ d\mu(s)
 <\tfrac{\varepsilon}{2}.
\]
Since $\|\pi(f)\xi_0\|\leq 1$, we deduce that
\begin{align}
\label{prp:NormsSigmaCpctLcg:eq2}
\|P_N(\pi(f)\xi_0)\|\geq \|\pi(f)\xi_0\|-\frac{\varepsilon}{2}.
\end{align}

The isometric representation $\rho\colon G\to\Isom(E)$ induces an isometric representation
$\rho_N\colon G/N \to \Isom(E^N)$ given by
\[
\rho(sN)\xi = \rho(s)\xi
\]
for all $s\in G$ and $\xi\in E$. Let $\pi_N\colon L^1(G/N)\to\B(E^N)$ denote the integrated form of $\rho_N$.
Since $E^N$ is a closed subspace of $E$, it follows that $E^N$ is a $\SL^p$-space.

The map $P_N$ induces a linear map $Q_N\colon\B(E)\to\B(E^N)$, which sends an operator $a\in\B(E)$ to the operator
$Q_N(a)\colon E^N\to E^N$ given by
\[
Q_N(a)\xi=P_N(a\xi)\]
for $\xi\in E^N\subseteq E$.

Consider the map $T_N\colon L^1(G)\to L^1(G/N)$ given by
\[
T_N(g)(sN) = \int_N g(sn)\ d\mu(n).
\]
for $s\in G$.
It is well-known that $T_N$ is a contractive homomorphism; see \cite[Theorem~3.5.4, p.106]{ReiSte00HarmAna}.
It is straightforward to check that the following diagram is commutative:
\[
\xymatrix{
L^1(G) \ar[r]^{\pi} \ar[d]_{T_N}
& \B(E) \ar[d]^{Q_N} \\
L^1(G/N) \ar[r]_{\pi_N}
& \B(E^N).
}
\]
In particular,
\[
\pi_N(T_N(f))P_N(\xi) = P_N(\pi(f)\xi)
\]
for all $\xi\in E$.

Using that $\|P_N(\xi_0)\|\leq 1$ at the second step, using \eqref{prp:NormsSigmaCpctLcg:eq2} at the fourth step, and using \eqref{prp:NormsSigmaCpctLcg:eq1} at the last step, we conclude that
\begin{align*}
\|T_N(f)\|_{\classSL^p}
&\geq \|\pi_N(T_N(f))\|\\
&\geq \|\pi_N(T_N(f))P_N(\xi_0)\| \\
&= \|P_N(\pi(f)\xi_0)\|\\
&\geq \|\pi(f)\xi_0\|-\tfrac{\varepsilon}{2}\\
&\geq \|\pi(f)\|-\varepsilon.
\end{align*}

Since $G/N$ is second-countable, we may apply \autoref{prp:Norms2CtblLcg} at the second step to obtain
\begin{align*}
\|f\|_{\classL^p}
\geq \|T_N(f)\|_{\classL^p}
= \|T_N(f)\|_{\classSL^p}
\geq \|\pi(f)\|-\varepsilon
=\|f\|_{\classSL^p}-\varepsilon.
\end{align*}
Since this holds for all $\varepsilon>0$, we have shown that $\|f\|_{\classSL^p}\leq\|f\|_{\classL^p}$, as desired.
\end{proof}

%==========================================================================================
\begin{thm}
\label{thm:NormsLcg}
Let $p\in[1,\infty)\setminus\{4,6,\ldots\}$, let $G$ be a locally compact group, and let $f\in L^1(G)$.
Then $\|f\|_{\classSL^p}=\|f\|_{\classL^p}$.
\end{thm}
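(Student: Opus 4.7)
The inequality $\|f\|_{\classL^p}\le\|f\|_{\classSL^p}$ is immediate from $\classL^p\subseteq\classSL^p$ and \autoref{rmk:ExistenceCanonicalMaps}. For the converse, my plan is to reduce to the $\sigma$-compact case handled by \autoref{prp:NormsSigmaCpctLcg}, exploiting the fact that every locally compact group is a directed union of its open, $\sigma$-compact subgroups. Since both seminorms are dominated by $\|\cdot\|_1$, it suffices to establish the equality on the dense subset of compactly supported functions in $L^1(G)$; every such $f$ has $\supp(f)$ contained in some open, $\sigma$-compact $H\le G$ (generated by $\supp(f)$ together with a precompact open neighborhood of the identity).

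The core step is a functoriality statement for open subgroups: if $H\le G$ is open and $f\in L^1(H)$ is viewed in $L^1(G)$ by extension by zero (an isometric inclusion, since left Haar measure on $G$ restricts to left Haar measure on $H$), then the norms of $f$ computed in $L^1(G)$ and in $L^1(H)$ coincide, both with respect to $\classL^p$ and with respect to $\classSL^p$. The inequality ``$\le$'' uses \emph{restriction}: by \autoref{prop:integratedForm}, a nondegenerate representation of $L^1(G)$ on $E\in\classSL^p$ comes from a strongly continuous isometric $\rho\colon G\to\Isom(E)$, and $\rho|_H$ is a strongly continuous isometric representation of $H$ on the same space, whose integrated form agrees with that of $\rho$ on $f$ (since $\supp(f)\subseteq H$). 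The inequality ``$\ge$'' uses \emph{induced representations}: given a strongly continuous $\sigma\colon H\to\Isom(F)$ with $F\in\classSL^p$, the induced representation $\Ind_H^G\sigma$ acts on the space of left-$H$-equivariant functions $G\to F$, which, because $G/H$ is discrete, identifies isometrically with $\ell^p(G/H,F)$ and therefore remains in $\classSL^p$ (respectively in $\classL^p$). The canonical embedding $\iota\colon F\to\ell^p(G/H,F)$ into the summand at the identity coset is $\sigma$-equivariant, and the restriction of the integrated form of $\Ind_H^G\sigma$ to $\iota(F)$ recovers the integrated form of $\sigma$; hence $\|\sigma(f)\|\le\|\Ind_H^G\sigma(f)\|$.

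Granting this functoriality, for $f$ supported in some open $\sigma$-compact $H\le G$ I chain
\[
\|f\|^G_{\classSL^p}=\|f\|^H_{\classSL^p}=\|f\|^H_{\classL^p}=\|f\|^G_{\classL^p},
\]
applying \autoref{prp:NormsSigmaCpctLcg} at the middle equality (here the superscript indicates which group the norm is computed in). A standard density and continuity argument, using that both seminorms are $\|\cdot\|_1$-contractive, then lifts this equality from the dense subset of compactly supported functions to all of $L^1(G)$.

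The main obstacle is the induction step: carefully constructing $\Ind_H^G\sigma$ in the Banach space setting for locally compact $G$, verifying that the openness of $H$ (hence discreteness of $G/H$) forces the induced space into $\classL^p$ or $\classSL^p$ via $\ell^p$-direct sums, checking strong continuity of the induced $G$-action, and identifying the equivariant embedding $\iota(F)$ on which the $L^1(H)$-action reproduces $\sigma$. The restriction direction, by contrast, amounts to unwinding definitions.
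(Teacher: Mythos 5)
Your proposal is correct and follows essentially the same route as the paper: reduce to $f\in C_c(G)$ supported in an open $\sigma$-compact subgroup $H$, get $\|f\|^G_{\classSL^p}\leq\|f\|^H_{\classSL^p}$ by restricting representations along the isometric inclusion $L^1(H)\hookrightarrow L^1(G)$, apply \autoref{prp:NormsSigmaCpctLcg} to $H$, and get $\|f\|^H_{\classL^p}\leq\|f\|^G_{\classL^p}$ by inducing along the open subgroup, using that $G/H$ is discrete so $\Ind_H^G(E)\cong\ell^p(G/H,E)$ stays in $\classL^p$ and that the integrated form of the induced representation restricted to the canonically embedded copy of $E$ recovers the original one. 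The only cosmetic difference is that you prove the induction step for both classes $\classL^p$ and $\classSL^p$, whereas the paper needs it only for $\classL^p$; this is harmless.
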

\begin{proof}
It is enough to prove the statement under the additional assumption that $f$ belongs to $C_c(G)$, the algebra of continuous functions $G\to\C$ that have compact support.
In this case, there exists an open (and hence closed) subgroup $H$ of $G$ such that $H$ is $\sigma$-compact and such that the support of $f$ is contained in $H$.

Given a function $g$ in $L^1(H)$, we extend $g$ to a function in $L^1(G)$, also denoted by $g$, by setting $g(s)=0$ for all $s\in G\setminus H$.
This defines an isometric homomorphism $\iota\colon L^1(H)\to L^1(G)$.
Considering the universal completions for representations on $\SL^p$- and $L^p$-spaces, we obtain two contractive homomorphisms
\[
\alpha_\subS\colon F^p_\subS(H) \to F^p_\subS(G),\quad
\alpha\colon F^p(H) \to F^p(G).
\]
Together with the maps $\kappa_\subS^H$ and $\kappa_\subS^G$ from \autoref{nota:kappaSameP}, we obtain the following commutative diagram:
\[
\xymatrix{
F^p_\subS(H) \ar[d]_{\alpha_\subS} \ar[r]^{\kappa_\subS^H}
& F^p(H) \ar[d]^{\alpha} \\
F^p_\subS(G) \ar[r]_{\kappa_\subS^G}
& F^p(G). \\
}
\]
We can consider $f$ as a function in $L^1(H)$ or in $L^1(G)$, and we will denote the norms in the corresponding universal completions by $\|f\|_{\classL^p}^H$, $\|f\|_{\classSL^p}^H$, $\|f\|_{\classL^p}^G$ and $\|f\|_{\classSL^p}^G$.

As in the proof of \autoref{prp:Norms2CtblLcg}, we have $\|f\|_{\classL^p}^G\leq\|f\|_{\classSL^p}^G$.
Using that $\alpha_S$ is contractive at the first step, and applying \autoref{prp:NormsSigmaCpctLcg} for $H$ at the second step, we deduce that
\[
\|f\|_{\classSL^p}^G
\leq \|f\|_{\classSL^p}^H
= \|f\|_{\classL^p}^H.
\]
Thus, in order to obtain the desired inequality $\|f\|_{\classSL^p}^G\leq\|f\|_{\classL^p}^G$, it is enough to show that
\[
\|f\|_{\classL^p}^H \leq \|f\|_{\classL^p}^G.
\]

By (2) in \autoref{rmk:FpG}, there exist a $L^p$-space $E$ and an isometric group representation $\rho\colon H\to\Isom(E)$ whose integrated form $\pi\colon L^1(H)\to\B(E)$ satisfies
\[
\|f\|_{\classL^p}^H = \|\pi(f)\|.
\]
We can induce $\rho$ to an isometric representation of $G$ as follows:
Consider the space
\[
\Ind_H^G(E)
=\left\{ \omega\in\ell^\infty(G,E)\ :\
\begin{matrix}
{\mbox{\; $\rho(h)(\omega(sh))=\omega(s)$ for all $s\in G$ and $h\in H$}}
 \\
{\mbox{and ($sH \mapsto \|\omega(s)\|$) is in $\ell^p(G/H)$}}
\end{matrix}
\right\},
\]
with the norm of an element $\omega\in \Ind_H^G(E)$ given by
\[
\|\omega\| = \left( \sum_{sH\in G/H} \|\omega(s)\|^p \right)^{1/p}.
\]
(The covariance condition $\rho(h)(\omega(sh))=\omega(s)$ ensures that for each $s$ in $S$, the norm $\|\omega(s)\|$ depends only on the class of $s$ in $G/H$.)
Since $G/H$ is discrete, we can choose a section $\sigma\colon G/H \to G$.
By assigning to an element $\omega\in \Ind_H^G(E)\subseteq \ell^\infty(G,E)$ the function $\omega\circ\sigma\colon G/H\to E$, we obtain an isometric isomorphism
\[
\Ind_H^G(E) \xrightarrow{\cong} \ell^P(G/H,E) \cong \bigoplus^p_{G/H} E,
\]
which shows that $\Ind_H^G(E)$ is an $L^p$-space.

The induced representation $\widetilde{\rho}=\Ind_H^G(\rho)\colon G\to\Isom(\Ind_H^g(E))$ is given by
\[
(\widetilde{\rho}(s)\omega)(t) = \omega(s^{-1}t),
\]
for $\omega\in\widetilde{E}$, and $s,t\in G$.
We let $\widetilde{\pi}\colon L^1(G)\to \B(\Ind_H^G(E))$ denote the integrated form of $\widetilde{\rho}$.

Consider the map $\varepsilon\colon E\to\Ind_H^G(E)$ given by
\[
\varepsilon(\xi)(s) =
\begin{cases}
\rho(s^{-1})\xi, & s\in H \\
0, & s\notin H
\end{cases}
\]
for $\xi\in E$. Let $e$ denote the unit element in $G$, and consider the evaluation map $\ev_e\colon \Ind_H^G(E)\to E$.
We have that $\varepsilon$ and $\ev_e$ are linear and contractive, and that $\ev_e\circ\varepsilon = \id_E$.
It follows in particular that $\varepsilon$ defines an isometric embedding of $E$ into $\Ind_H^G(E)$.
We can use $\varepsilon$ and $\ev_e$ to define an isometric homomorphism $Q\colon \B(E)\to\B(\Ind_H^G(E))$ by
\[
Q(a)\omega = \varepsilon(a\ev_e(\omega)) = \varepsilon(a\omega(e))
\]
for $a\in \B(E)$ and $\omega\in \Ind_H^G(E)$. It is then straightforward to check that the following diagram is commutative:
\[
\xymatrix{
L^1(H) \ar[r]^{\pi} \ar[d]^{\iota}
& \B(E) \ar[d]^{Q} \\
L^1(G) \ar[r]^{\widetilde{\pi}}
& \B(\widetilde{E}). \\
}
\]
We a slight abuse of notation, we write $f=\iota(f)$. We conclude that
\begin{align*}
\|f\|_{\classL^p}^G
\leq \|\widetilde{\pi}(f)\|
= \|Q(\pi(f))\|
= \|\pi(f)\|
= \|f\|_{\classL^p}^H,
\end{align*}
which is the desired inequality.
\end{proof}

%==========================================================================================
The above theorem can be used to show that, for some values of $p\in [1,\infty)$,
the canonical maps between certain universal completions are always isometric.

%==========================================================================================
\begin{cor}
\label{cor:QandSisom}
Let $p\in[1,\infty)$ and let $G$ be a \lcg.
\begin{enumerate}
\item
If $p\notin\{4,6,8,\ldots\}$, then the canonical map
\[
\kappa_{\subS}\colon F^p_{\subS}(G) \to F^p(G),
\]
is an isometric isomorphism.
\item
If $p\notin\{\tfrac{4}{3},\tfrac{6}{5},\tfrac{8}{7},\ldots\}$, then the canonical map
\[
\kappa_{\subQ}\colon F^p_{\subQ}(G) \to F^p(G),
\]
is an isometric isomorphism.
\end{enumerate}
\end{cor}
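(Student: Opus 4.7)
The plan is to deduce both statements directly from Theorem \ref{thm:NormsLcg}, with part (2) following from part (1) via the duality established in Proposition \ref{prop:DualityUniv}.

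For part (1), the map $\kappa_{\subS}$ is induced by the identity on $L^1(G)$ (see Remark \ref{rmk:ExistenceCanonicalMaps} and Notation \ref{nota:kappaSameP}), so it is contractive with dense range, and on the canonical image of $L^1(G)$ it sends the class of $f$ measured in $\|\cdot\|_{\classSL^p}$ to the class of $f$ measured in $\|\cdot\|_{\classL^p}$. Theorem \ref{thm:NormsLcg} asserts that these two norms coincide on $L^1(G)$ whenever $p\notin\{4,6,8,\ldots\}$, so $\kappa_{\subS}$ is isometric on a dense subspace and therefore extends to an isometric isomorphism of the completions.

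For part (2), I would invoke Proposition \ref{prop:DualityUniv}, which, via the identities $(\classQL^p)'=\classSL^{p'}$ and $(\classL^p)'=\classL^{p'}$, provides canonical isometric isomorphisms $F^p_{\subQ}(G)\cong F^{p'}_{\subS}(G)$ and $F^p(G)\cong F^{p'}(G)$. Under these identifications the map $\kappa_{\subQ}$ on $F^p_{\subQ}(G)$ corresponds to the map $\kappa_{\subS}$ on $F^{p'}_{\subS}(G)$; this compatibility, which is the only point requiring real checking, follows from naturality, since both maps are induced by the identity on $L^1$, and the duality isomorphism of Proposition \ref{prop:DualityUniv} is itself induced from the identity on $L^1$ composed with the anti-isomorphism of Lemma \ref{lma:Gop}. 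It remains to translate the hypothesis on $p$: writing $p=(2k+2)/(2k+1)$ gives $p'=2k+2$, and conversely $p'=2m$ forces $p=2m/(2m-1)$, so $p\notin\{\tfrac{4}{3},\tfrac{6}{5},\tfrac{8}{7},\ldots\}$ if and only if $p'\notin\{4,6,8,\ldots\}$. Applying part (1) at the exponent $p'$ then finishes the proof.

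The only nontrivial step is the intertwining claim used in part (2), and even this reduces to observing that every map in sight is the unique extension of the identity on $L^1(G)$ (or of its anti-isomorphic image under Lemma \ref{lma:Gop}) to the appropriate universal completion.
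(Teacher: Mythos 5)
Your proposal matches the paper's own argument exactly: part (1) is read off from Theorem \ref{thm:NormsLcg} since $\kappa_{\subS}$ is the identity on the dense copy of $L^1(G)$, and part (2) is deduced by duality via Proposition \ref{prop:DualityUniv}; the paper simply omits the details of the intertwining and of the exponent translation that you spell out (both of which you carry out correctly). No issues.
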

\begin{proof}
The first statement is an immediate consequence of \autoref{thm:NormsLcg}.
The second claim follows from part (1), using duality.
We omit the details.
\end{proof}

%==========================================================================================
\begin{rmk}
It is known that the conclusion of \autoref{thm:Extension} fails for $p\in \{4,6,8,\ldots\}$.
Indeed, in \cite{Lus78ConsequencesRudin}, Lusky shows that the Hardin's extension theorem (Theorem~4.2 in \cite{Har81IsometriesSubLp}) fails for all even integers greater than 2, providing concrete counterexamples that are based on computations of Rudin in Example~3.6 of \cite{Rud76LpIsom}.
However, we do not know whether the restrictions on the H\"older exponent are necessary in \autoref{thm:NormsLcg} and \autoref{cor:QandSisom}.
\end{rmk}

\autoref{cor:QandSisom} suggests the following:

\begin{qst}\label{qst:QSLp}
Let $G$ be a locally compact group and let
\[p\in [1,\infty)\setminus\left\{2n,\frac{2n}{2n-1}\colon n\geq 2\right\}.\]
Then there are canonical isometric isomorphisms $F^p(G)\cong F^p_{\subS}(G)\cong F^p_{\subQ}(G)$.
Is the canonical map $F^p_{\subQS}(G)\to F^p(G)$ an isometric isomorphism?
\end{qst}

Again, the answer to the above question is yes if $p=1,2$, and also if $G$ is amenable (for arbitrary $p$).
One would hope to be able to combine the facts that $F^p(G)\cong F^p_{\subS}(G)$ and $F^p(G)\cong F^p_{\subQ}(G)$
to say something about $F^p_{\subQS}(G)$ in relation to $F^p(G)$, but this is not clear. \autoref{qst:QSLp} may
well have a negative answer.

\vspace{0.3cm}

%==========================================================================================
The following is the main result of this subsection.

%==========================================================================================
\begin{thm}
\label{thm:CanMaps}
Let $G$ be a \lcg.
\begin{enumerate}
\item
If $1\leq p \leq q\leq 2$, then the identity map on $L^1(G)$ extends to a contractive homomorphism
\[
\gamma_{p,q}\colon F^p(G)\to F^q(G)
\]
with dense range.
\item
If $2\leq r \leq s$, then the identity map on $L^1(G)$ extends to a contractive homomorphism
\[
\gamma_{s,r}\colon F^s(G)\to F^r(G)
\]
with dense range.
\end{enumerate}
Moreover, the following diagram is commutative:
\[
\xymatrix{
 { F^p_\subS(G) } \ar[dr] \ar[d]^{\kappa_\subS^p}_{\cong}
& { F^q_\subS(G) } \ar[dr] \ar[d]^{\kappa_\subS^q}_{\cong}
& { F^2_\subS(G) } \ar@{=}[d] \\
{ F^p(G) } \ar@{-->}[r]_{\gamma_{p,q}}
& { F^q(G) } \ar@{-->}[r]_{\gamma_{q,2}}
& { C^*(G) } \ar@{=}[d]
& { F^r(G) } \ar@{-->}[l]_{\gamma_{r,2}}
& { F^s(G) } \ar@{-->}[l]_{\gamma_{s,r}} \\
& & { F^2_\subQ(G) }
& { F^r_\subQ(G) } \ar[ul] \ar[u]^{\kappa_\subQ^r}_{\cong}
& { F^s_\subQ(G) } \ar[ul] \ar[u]^{\kappa_\subQ^s}_{\cong} \\
}
\]
\end{thm}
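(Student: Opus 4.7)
The plan is to deduce this theorem directly from the heavy machinery already set up, namely \autoref{cor:QandSisom} (which identifies $F^p_\subS(G)$ with $F^p(G)$ for $p\notin\{4,6,\ldots\}$ and $F^p_\subQ(G)$ with $F^p(G)$ for $p\notin\{\tfrac{4}{3},\tfrac{6}{5},\ldots\}$) together with the canonical maps $\kappa_{\classSL^p,\classL^q}$ and $\kappa_{\classQL^s,\classL^r}$ from \autoref{nota:kappaPQ}. The key observation is that the excluded H\"older exponents in \autoref{cor:QandSisom} never occur in the situations we care about: in part (1) we have $p\leq 2$, so $p\notin\{4,6,8,\ldots\}$; in part (2) we have $s\geq 2$, so $s\notin\{\tfrac{4}{3},\tfrac{6}{5},\tfrac{8}{7},\ldots\}$. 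Thus the relevant $\kappa_\subS^p$ and $\kappa_\subQ^s$ are isometric isomorphisms, and may safely be inverted.

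For part (1), fix $1\leq p\leq q\leq 2$. I would define
\[
\gamma_{p,q} = \kappa_{\classSL^p,\classL^q}\circ (\kappa_\subS^p)^{-1}\colon F^p(G)\xrightarrow{\cong} F^p_\subS(G)\to F^q(G),
\]
which is contractive as a composition of an isometric isomorphism and a contraction. Part (2) is entirely parallel: for $2\leq r\leq s$, set
\[
\gamma_{s,r} = \kappa_{\classQL^s,\classL^r}\circ (\kappa_\subQ^s)^{-1}\colon F^s(G)\xrightarrow{\cong} F^s_\subQ(G)\to F^r(G).
\]
Since every constituent map is the identity on the dense subalgebra $L^1(G)$, so is $\gamma_{p,q}$ (respectively $\gamma_{s,r}$); in particular it extends the identity on $L^1(G)$, and its range is dense because it contains the image of $L^1(G)$ in $F^q(G)$ (respectively $F^r(G)$), which is dense there.

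Commutativity of the diagram is then immediate by construction. Indeed, the two triangles on the left of the diagram commute by the very definition of $\gamma_{p,q}$ and $\gamma_{q,2}$ (the latter using that $F^2_\subS(G)=F^2(G)=C^*(G)$ by \autoref{prp:F2}, so $\kappa_\subS^2$ is the identity), and the two triangles on the right commute analogously from the definition of $\gamma_{r,2}$ and $\gamma_{s,r}$, noting that $F^2_\subQ(G)=C^*(G)$ as well. The only remaining identification is that the composition $\gamma_{q,2}\circ\gamma_{p,q}\colon F^p(G)\to C^*(G)$ agrees with a canonical map; this holds because both sides extend the identity on $L^1(G)$, which has dense image in $F^p(G)$, so the two contractive extensions must coincide.

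There is essentially no main obstacle here, since the heavy lifting was done in \autoref{thm:NormsLcg} and \autoref{cor:QandSisom}; this theorem simply packages those results together. The one point to be mildly careful about is verifying, as above, that the excluded exponents in \autoref{cor:QandSisom} lie outside the ranges $[1,2]$ and $[2,\infty)$ relevant to parts (1) and (2), which is why the inversion of $\kappa_\subS^p$ and $\kappa_\subQ^s$ is unconditional in our setting.
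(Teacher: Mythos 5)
Your proposal is correct and follows essentially the same route as the paper: the inequality $\|f\|_{\classL^q}\leq\|f\|_{\classSL^p}=\|f\|_{\classL^p}$ coming from \autoref{prop:LqSLp} and \autoref{cor:QandSisom} is exactly what your composition $\kappa_{\classSL^p,\classL^q}\circ(\kappa_\subS^p)^{-1}$ encodes, and your observation that the excluded exponents of \autoref{cor:QandSisom} fall outside $[1,2]$ (respectively $[2,\infty)$) is the same point the paper relies on implicitly. The only cosmetic difference is that the paper derives part (2) from part (1) by duality, whereas you run the symmetric quotient-space argument directly; since part (2) of \autoref{cor:QandSisom} is itself obtained by duality, the content is identical.
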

\begin{proof}
We only prove the first part, since the second one follows from the first using duality. Let $f\in L^1(G)$.
We use \autoref{prop:LqSLp} at the first step and \autoref{cor:QandSisom} at the second to get
\[
\|f\|_{\classL^q}\leq \|f\|_{\classSL^p}=\|f\|_{\classL^p}.
\]
We conclude that the identity map on $L^1(G)$ extends to a contractive homomorphism $\gamma_{p,q}\colon F^p(G)\to F^q(G)$ with dense range.
Commutativity of the diagram depicted in the statement follows from the fact that all the maps involved are the identity on the respective copies of $L^1(G)$.
\end{proof}

We point out that the statement analogous to \autoref{thm:CanMaps} is in general false for \'etale groupoids.
(\'Etale groupoid $L^p$-operator algebras have been introduced and studied in \cite{GarLup14arX:Groupoids}.)
Indeed, the analogs of
Cuntz algebras $\mathcal{O}_n^p$ on $L^p$-spaces (introduced by Phillips in \cite{Phi12arX:LpCuntz}), are
groupoid $L^p$-operator algebras by Theorem~7.7 in \cite{GarLup14arX:Groupoids}. On the other hand,
if $p$ and $q$ are different H\"older
exponents and $n\geq 2$ is an integer, then there is no non-zero continuous homomorphism
$\mathcal{O}_n^p\to \mathcal{O}_n^q$ by Corollary~9.3 in~\cite{Phi12arX:LpCuntz} (see also the comments after it),
which rules out any reasonable generalization of \autoref{thm:CanMaps}
to groupoids. In particular, there seems to be no analog of Hardin's results (or our \autoref{thm:Extension})
for groupoids.

%==========================================================================================
\section{Algebras of \texorpdfstring{$p$}{p}-pseudofunctions and amenability}

%==========================================================================================
In this section, we recall the construction of the algebra $F_\lambda^p(G)$ of $p$-pseudo\-functions on a \lcg{} $G$, for $p\in [1,\infty)$, as introduced by Herz in \cite{Her73SynthSubgps}.
(Our notation differs from the one used by Herz.)
There are natural contractive homomorphisms with dense range from any of the universal completions studied in the previous section, to the algebra of $p$-pseudofunctions.
In \autoref{thm:AmenTFAE}, we characterize amenability of a locally compact group $G$ in terms of these maps. As an application, we show in \autoref{cor:FpG} that for an amenable group $G$, and for $1\leq p\leq q\leq 2$ or for $2\leq q\leq p<\infty$, the canonical map $\gamma_{p,q}\colon F^p(G)\to F^q(G)$ constructed in \autoref{thm:CanMaps} is always
injective, and that it is surjective if and only if $G$ is finite.

%==========================================================================================
\subsection{Algebras of \texorpdfstring{$p$}{p}-pseudofunctions}

%==========================================================================================
We denote by $p$ a fixed H\"older exponent in $[1,\infty)$.
For a \lcg{} $G$, let $\lambda_p\colon L^1(G)\to \B(L^p(G))$ denote the (integrated form of the) left regular representation of $G$ on $L^p(G)$.
For $f$ in $L^1(G)$, we have $\lambda_p(f)\xi=f\ast \xi$ for all $\xi\in L^p(G)$.

%==========================================================================================
\begin{df}
\label{df:reducedGroupAlgebra}
Let $G$ be a \lcg.
The \emph{algebra of $p$-pseudo\-functions} on $G$, here denoted by $F^p_\lambda(G)$, is the completion of $L^1(G)$ with respect to the norm induced by the left regular representation of $L^1(G)$ on $L^p(G)$.
\end{df}

%==========================================================================================
\begin{rmk}
Let $G$ be a \lcg.
The left regular representation $\lambda_p\colon L^1(G)\to \B(L^p(G))$ induces an isometric embedding
$F^p_\lambda(G) \to \B(L^p(G))$ under which we regard $F^p_\lambda(G)$ as represented on $L^p(G)$.
In particular, $F^p_\lambda(G)$ is an $\classL^p$-operator algebra.

In the literature, the elements of $F^p_\lambda(G)$ have been called \emph{$p$-pseudofunctions}, and the Banach algebra $F^p_\lambda(G)$ is usually denoted by $\PF_p(G)$.
This terminology goes back to Herz; see Section 8 in \cite{Her73SynthSubgps}.
(We are thankful to Y.~Choi and M.~Daws for providing this reference.)
Our notation follows one of the standard conventions in $C^*$-algebra theory (\cite{BroOza08Book}).
We warn the reader that $F^p_\lambda(G)$ has also been called the \emph{reduced group $\classL^p$-operator algebra} of $G$, and is sometimes denoted $F^p_{\mathrm{r}}(G)$, for example in \cite{Phi13arX:LpCrProd}.\end{rmk}

It is immediate to check that when $p=2$, the algebra $F^2_\lambda(G)$ agrees with the \emph{reduced group \ca{}} of $G$, which is usually denoted $C^*_\lambda(G)$.

%==========================================================================================
The proof of the following proposition is straightforward and is left to the reader.

%==========================================================================================
\begin{prop}
Let $G$ be a \lcg.
The left regular representation of $L^1(G)$ on $L^p(G)$ is a representation in $\Rep_G(\classL^p)$.
Therefore, the identity map on $L^1(G)$ induces a canonical contractive homomorphism
\[
\kappa\colon F^p(G)\to F^p_\lambda(G)
\]
with dense range.
\end{prop}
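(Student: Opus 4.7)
The proposition is essentially a verification that everything fits together with the universal construction from \autoref{df:universalCompletion}, so my plan is short and splits into three transparent steps.

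First I would check that $\lambda_p\colon L^1(G)\to\B(L^p(G))$ really belongs to $\Rep_G(\classL^p)$. Since $L^p(G)$ is by construction an $L^p$-space (with respect to the left Haar measure on $G$), the target belongs to $\classL^p$, and the standard computation using Minkowski's integral inequality shows that $\|\lambda_p(f)\|\leq\|f\|_1$ for every $f\in L^1(G)$, so $\lambda_p$ is a contractive homomorphism. The only thing that really needs to be said is non-degeneracy: if $(e_d)_{d\in\Lambda}$ is a contractive approximate identity for $L^1(G)$ (for instance, a standard approximate identity coming from shrinking neighborhoods of the identity in $G$), then for each $\xi\in L^p(G)$ one has $\lambda_p(e_d)\xi=e_d\ast\xi\to\xi$ in $L^p$-norm, which shows that $\{\lambda_p(f)\xi:f\in L^1(G),\,\xi\in L^p(G)\}$ spans a dense subspace of $L^p(G)$.

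Second, I would invoke the universal construction. From $\lambda_p\in\Rep_G(\classL^p)$ and the very definition of $\|\cdot\|_{\classL^p}$ in \autoref{df:universalCompletion}, we get
\[
\|\lambda_p(f)\|\leq\|f\|_{\classL^p}
\]
for every $f\in L^1(G)$. Consequently, $\lambda_p$ vanishes on the ideal $I_{\classL^p}\subseteq L^1(G)$ and descends to a contractive homomorphism $L^1(G)/I_{\classL^p}\to\B(L^p(G))$, which extends by continuity to a contractive homomorphism $\widetilde{\lambda}_p\colon F^p(G)\to\B(L^p(G))$.

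Third, I would observe that by \autoref{df:reducedGroupAlgebra} the algebra $F^p_\lambda(G)$ is precisely the closure of $\lambda_p(L^1(G))$ inside $\B(L^p(G))$, so the image of $\widetilde{\lambda}_p$ is contained in $F^p_\lambda(G)$. Corestricting gives the canonical contractive homomorphism $\kappa\colon F^p(G)\to F^p_\lambda(G)$, which agrees with $\lambda_p$ on $L^1(G)$. Density of range is then automatic: the image contains the image of $L^1(G)$, which is dense in $F^p_\lambda(G)$ by definition.

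There is no real obstacle here; the only point where one must be a little careful is the non-degeneracy verification, since the definition of $\Rep_\mathcal{E}(G)$ in \autoref{df:universalCompletion} explicitly requires non-degenerate representations. Everything else is a direct application of the universal property.
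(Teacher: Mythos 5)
Your proof is correct and is exactly the straightforward verification the paper has in mind (the paper explicitly omits the proof as routine): check that $\lambda_p$ is a nondegenerate contractive representation on the $L^p$-space $L^p(G)$, deduce $\|\lambda_p(f)\|\leq\|f\|_{\classL^p}$ from the definition of the universal norm, and corestrict to the closure of $\lambda_p(L^1(G))$. Your attention to nondegeneracy via the approximate identity is the right point to be careful about, and nothing is missing.
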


%==========================================================================================
We now turn to duality.
Let $G$ be a \lcg, and denote by $\Delta\colon G\to \R$ its modular function.
For $f\in L^1(G)$, let $f^\sharp\colon G\to\C$ be given by $f^\sharp(s) = \Delta(s^{-1}) f(s^{-1})$ for all
$s$ in $G$.

%==========================================================================================
\begin{rmk}
\label{rem:sharp}
For $f$ and $g$ in $L^1(G)$, it is straightforward to check that
\begin{enumerate}
\item
$(f^\sharp)^\sharp=f$;
\item
$(f\ast g)^\sharp= g^\sharp\ast f^\sharp$; and
\item
$\|f^\sharp\|_1=\|f\|_1$ for all $f$ in $L^1(G)$.
\end{enumerate}
In other words, the map $f\mapsto f^\sharp$ defines an isometric anti-automorphism of $L^1(G)$.
It is also immediate to check that if $G$ is unimodular, then $f\mapsto f^\sharp$ also defines an isometric
anti-automorphism of $L^p(G)$ for every $p\in [1,\infty]$.
\end{rmk}

%==========================================================================================
Let $p\in (1,\infty)$, and denote by $p'$ its conjugate exponent. Consider the bilinear paring $L^p(G)\times L^{p'}(G)\to \C$ given by
\[
\langle \xi, \eta \rangle = \int_G \xi(s)\eta(s) ds
\]
for all $\xi\in L^p(G)$ and all $\eta\in L^{p'}(G)$.
It is a standard fact that
\[
\|\xi\|_p = \sup\left\{\left| \langle \xi, \eta \rangle \right|\colon \eta \in L^{p'}(G), \|\eta\|_{p'}=1\right\}
\]
for every $\xi\in L^p(G)$.

%==========================================================================================
\begin{prop}\label{prop:dualityReduced}
Let $G$ be a locally compact group, let $p\in (1,\infty)$, and let $p'$ be its conjugate exponent.
Then
there is a canonical isometric anti-isomorphism $F^p_\lambda(G)\cong F^{p'}_\lambda(G)$,
which is induced by the map $\sharp\colon L^1(G)\to L^1(G)$.
\end{prop}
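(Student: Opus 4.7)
The plan is to prove that the isometric anti-automorphism $\sharp\colon L^1(G)\to L^1(G)$ recorded in \autoref{rem:sharp} remains isometric when the left-hand side is equipped with the norm from $\lambda_p$ and the right-hand side with the norm from $\lambda_{p'}$. The conceptual reason is that convolution by $f^\sharp$ acting on $L^{p'}(G)$ is (up to the canonical pairing) the Banach-space transpose of convolution by $f$ acting on $L^p(G)$. Since any bounded operator $T$ between Banach spaces satisfies $\|T\|=\|T'\|$, this is precisely what we need.

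Concretely, I would first compute $\langle \lambda_p(f)\xi,\eta\rangle$ for $\xi\in L^p(G)$ and $\eta\in L^{p'}(G)$ using Fubini and left-invariance of Haar measure, rewriting it as
\[
\langle \lambda_p(f)\xi,\eta\rangle = \int_G\!\int_G f(t)\xi(u)\eta(tu)\,du\,dt.
\]
Then I would apply the substitution $t\mapsto t^{-1}$, which introduces a factor of $\Delta(t^{-1})$ in the outer integral; this is exactly the factor that converts $f(t^{-1})$ into $f^\sharp(t)$. After this substitution, the expression rearranges to $\int_G \xi(u)(f^\sharp * \eta)(u)\,du = \langle \xi,\lambda_{p'}(f^\sharp)\eta\rangle$. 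Thus $\lambda_p(f)' = \lambda_{p'}(f^\sharp)$ under the standard duality $L^p(G)' \cong L^{p'}(G)$.

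From the identity $\|\lambda_p(f)\| = \|\lambda_p(f)'\| = \|\lambda_{p'}(f^\sharp)\|$, the map $\sharp$ is isometric between $(L^1(G), \|\cdot\|_{F^p_\lambda})$ and $(L^1(G), \|\cdot\|_{F^{p'}_\lambda})$, so it extends by density to an isometric linear map $F^p_\lambda(G)\to F^{p'}_\lambda(G)$. By \autoref{rem:sharp}, $\sharp$ reverses products in $L^1(G)$, hence the extension is an anti-homomorphism. Finally, since $(f^\sharp)^\sharp = f$, the same construction applied with $p$ and $p'$ interchanged yields a two-sided inverse, so the extension is an isometric anti-isomorphism.

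The only step that requires any care is the change of variables $t\mapsto t^{-1}$ and the bookkeeping of the modular function; the rest is straightforward application of Fubini, left-invariance of Haar measure, and the identity $\|T\|=\|T'\|$. There is no serious obstacle, as this is essentially a direct duality calculation made possible by the definition of $f^\sharp$ being precisely the one needed to absorb the modular factor that arises from inverting the integration variable.
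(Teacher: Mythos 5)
Your proposal is correct and follows essentially the same route as the paper: both establish the transpose identity relating $\lambda_p$ and $\lambda_{p'}$ via the bilinear pairing (the paper writes it as $\lambda_p(f^\sharp)'=\lambda_{p'}(f)$, you as $\lambda_p(f)'=\lambda_{p'}(f^\sharp)$, which are equivalent by $(f^\sharp)^\sharp=f$), and both conclude from $\|T\|=\|T'\|$ that $\sharp$ is isometric for the respective pseudofunction norms and hence extends to the desired isometric anti-isomorphism. The modular-function bookkeeping in your change of variables is handled correctly.
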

\begin{proof}
Given $f$ in $L^1(G)$, we claim that $\lambda_{p}(f^\sharp)'=\lambda_{p'}(f)$.
Fix $\xi\in L^p(G)$ and $\eta\in L^{p'}(G)$. Then
\begin{align*}
\langle f^\sharp\ast \xi, \eta \rangle
&= \int_G (f^\sharp\ast \xi)(s) \eta(s) ds \\
&= \int_G \left( \int_G \Delta(t^{-1}) f^\sharp(st^{-1})\xi(t) dt \right) \eta(s) ds \\
&= \int_G \int_G \Delta(t^{-1}) \Delta(ts^{-1}) f(ts^{-1}) \xi(t) \eta(s) dt ds \\
&= \int_G \left( \int_G \Delta(s^{-1}) f(ts^{-1}) \eta(s) ds \right) \xi(t) dt \\
&= \int_G (f\ast \eta)(t) \xi(t) dt \\
&= \langle \xi, f\ast \eta \rangle,\end{align*}
so the claim follows.

It follows that $\|\lambda_{p}(f^\sharp)\|=\|\lambda_{p'}(f)\|$, and hence
the map $\sharp\colon L^1(G)\to L^1(G)$ induces a canonical isometric anti-isomorphism
$F^p_\lambda(G)\to F^{p'}_\lambda(G)$, as desired.\end{proof}

With the notation of the proposition above, we point out that when $p\neq 2$,
we do not seem to get the existence of a canonical isometric isomorphism $F^p_\lambda(G)\cong F^{p'}_\lambda(G)$,
since $\|\lambda_p(f)\|$ and $\|\lambda_p(f^\sharp)\|$ are not in general equal, unless $G$ is abelian (see
\autoref{prop:Gabelian}).
In fact, Herz proved in Corollary~1 of \cite{Her76AsymNormsConv},
that for \emph{every} finite non-abelian group $G$, and for \emph{every} $p\in
(1,\infty)\setminus\{2\}$, there exists $f\in \ell^1(G)$ such that $\|\lambda_p(f)\|_p\neq \|\lambda_{p'}(f)\|_{p'}$.

\subsection{Group and Banach algebra amenability}

Let us recall some facts from functional analysis.
If $E$ is a Banach space and $\xi \in E$, then
\[
\|\xi\| = \sup \left\{ |f(\xi)|\colon f\in E', \|f\|=1 \right\},
\]
We use this to prove the following standard result.

\begin{lma}
\label{lma:HB}
Let $E$ and $F$ be two Banach spaces, and let $\varphi\colon E\to F$ be a bounded linear map.
\begin{enumerate}
\item
The map $\varphi$ has dense image if and only if $\varphi'$ is injective.
\item
The map $\varphi$ is an isometric isomorphism if and only if $\varphi'$ is.
\end{enumerate}
\end{lma}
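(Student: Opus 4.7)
The plan is to prove part (1) by a direct application of the Hahn-Banach theorem, and then leverage part (1) to handle part (2), where the substantive direction will be showing that $\varphi'$ being an isometric isomorphism forces $\varphi$ to be one as well.

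For part (1), one direction is immediate: if $\varphi(E)$ is dense in $F$ and $f\in F'$ satisfies $\varphi'(f)=0$, then $f\circ\varphi=0$ means $f$ vanishes on the dense set $\varphi(E)$, hence on all of $F$, so $f=0$. For the converse, I would argue by contrapositive. If $\varphi(E)$ is not dense, then $\overline{\varphi(E)}$ is a proper closed subspace of $F$. Pick any $\eta\in F\setminus \overline{\varphi(E)}$. By Hahn-Banach, there exists $f\in F'$ with $f=0$ on $\overline{\varphi(E)}$ but $f(\eta)\neq 0$. Then $\varphi'(f)(\xi)=f(\varphi(\xi))=0$ for every $\xi\in E$, so $\varphi'(f)=0$ while $f\neq 0$, showing $\varphi'$ is not injective.

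For part (2), if $\varphi$ is an isometric isomorphism then $\varphi^{-1}$ exists and is an isometric isomorphism, and $(\varphi^{-1})'=(\varphi')^{-1}$; together with the identity $\|\varphi'(f)\|=\sup_{\|\xi\|=1}|f(\varphi(\xi))|=\sup_{\|\eta\|=1}|f(\eta)|=\|f\|$ (using surjectivity and isometricity of $\varphi$ to rewrite the supremum), this shows $\varphi'$ is an isometric isomorphism. For the converse, assume $\varphi'$ is an isometric isomorphism. Injectivity of $\varphi'$ together with part (1) gives that $\varphi$ has dense range. For any $\xi\in E$, the duality formula stated right before the lemma gives
\[
\|\varphi(\xi)\| = \sup\{|f(\varphi(\xi))|\colon f\in F',\ \|f\|=1\} = \sup\{|\varphi'(f)(\xi)|\colon f\in F',\ \|f\|=1\}.
\]
Since $\varphi'$ is a surjective isometry, as $f$ ranges over the unit sphere of $F'$, $\varphi'(f)$ ranges over the unit sphere of $E'$; hence the last supremum equals $\sup\{|g(\xi)|\colon g\in E',\ \|g\|=1\}=\|\xi\|$. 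Thus $\varphi$ is isometric, and its image, being isometric to the Banach space $E$, is complete and therefore closed in $F$. Combined with density, $\varphi(E)=F$, so $\varphi$ is an isometric isomorphism.

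The only nontrivial step is the use of Hahn-Banach in part (1); everything else is a formal computation, and I do not anticipate any real obstacle.
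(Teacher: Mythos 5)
Your proof is correct and follows essentially the same route as the paper: Hahn--Banach applied to a functional vanishing on $\overline{\varphi(E)}$ for the converse of (1), and the duality formula $\|\varphi(\xi)\|=\sup\{|\varphi'(f)(\xi)|\colon \|f\|=1\}$ together with surjectivity of $\varphi'$ on the unit sphere for the converse of (2). The only difference is cosmetic (contrapositive versus contradiction in part (1), and you spell out the easy direction of (2) which the paper leaves as clear).
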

\begin{proof}
(1). Suppose that $\varphi$ has dense image, and let $f$ in $F'$ satisfy $\varphi'(f)=0$.
Then $\varphi'(f)(\xi)=f(\varphi(\xi))=0$ for all $\xi\in E$. By continuity, we must have
$f(\eta)=0$ for all $\eta$ in $F$, so $f=0$ and $\varphi'$ is injective. Conversely, assume
that $\varphi'$ is injective. In order to arrive at a contradiction, assume that there exists $\eta$ in $F$
not in the closure of $\varphi(E)$. Define a linear functional $g\colon \overline{\varphi(E)}+\C \eta\to \C$
by $g(\varphi(\xi))=0$ for all $\xi$ in $E$, and $g(\eta)=\|\eta\|$. Then $g$ is continuous and $\|g\|=1$, so by
Hahn-Banach there exists a functional $\widetilde{g}\colon F\to \C$ extending $g$ with %$\|\widetilde{g}\|=1$.
Finally, it is clear that $\varphi'(\widetilde{g})$ is the zero functional on $E$, contradicting
injectivity of $\varphi'$. This shows that $\varphi(E)$ is dense in $F$.
(2).
It is clear that $\varphi'$ is an isometric isomorphism if $\varphi$ is.
Assume now that $\varphi'$ is an isometric isomorphism.
Then $\|\varphi'(f)\|=\|f\|$ for every $f\in F'$, so $\varphi'$ maps the unit ball of $F'$
surjectively onto the unit ball of $E'$.
Given $\xi\in E$, we have
\begin{align*}
\|\varphi(\xi)\|
&= \sup \left\{ |f(\varphi(\xi))|\colon f\in F', \|f\|=1 \right\} \\
&= \sup \left\{ |\varphi'(f)(\xi)|\colon f\in F', \|f\|=1 \right\} \\
&= \sup \left\{ |g(\xi)|\colon g\in E', \|g\|=1 \right\} \\
&= \|\xi\|,
\end{align*}
which shows that $\varphi$ is an isometric embedding of $E$ into $F$.
In particular, the image of $\varphi$ is closed in $F$.
Since the image of $\varphi$ is dense in $F$ by part (1) of this lemma,
we conclude that $\varphi$ is an isometric isomorphism.\end{proof}

The next theorem characterizes amenability of a locally compact group in terms of the canonical maps between its enveloping operator algebras.
The case $p=2$ of the result below is a standard fact in \ca{} theory;
see Theorem~2.6.8 in \cite{BroOza08Book}.

When $\mathcal{E}=\classL^p$, the equivalence between (1) and (2) in the theorem below was independently
obtained by Phillips, whose methods are inspired in $C^*$-algebraic techniques; see \cite{Phi14pre:LpMultDom}.

We denote by $\kappa_u\colon F^p_{\mathrm{QS}}(G)\to F^p_\lambda(G)$ the composition $\kappa\circ \kappa_{\subQS}$.

%==========================================================================================
\begin{thm}
\label{thm:AmenTFAE}
Let $G$ be a \lcg, and let $p\in(1,\infty)$.
Then the following are equivalent:
\begin{enumerate}
\item
The group $G$ is amenable.
\item
With $\mathcal{E}$ denoting any of the classes $\classQSL^p$, $\classSL^p$, $\classQL^p$ or $\classL^p$, the canonical map $F_\mathcal{E}(G)\to F^p_\lambda(G)$ is an isometric isomorphism.
\item
With $\mathcal{E}$ denoting any of the classes $\classQSL^p$, $\classSL^p$, $\classQL^p$ or $\classL^p$, the canonical map $F_\mathcal{E}(G)\to F^p_\lambda(G)$ is a (not necessarily isometric) isomorphism.
\end{enumerate}
\end{thm}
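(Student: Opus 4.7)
The implication (2) $\Rightarrow$ (3) is immediate, so I will focus on (1) $\Rightarrow$ (2) and (3) $\Rightarrow$ (1). For (1) $\Rightarrow$ (2), the canonical maps factor as contractions
\[
F^p_{\subQS}(G) \to F^p_{\subS}(G) \to F^p(G) \to F^p_\lambda(G)
\]
(and analogously through $F^p_\subQ(G)$), so it suffices to prove that the composite is isometric. Equivalently, for every integrated form $\pi\colon L^1(G)\to\B(E)$ of an isometric representation $\rho\colon G\to\Isom(E)$ with $E\in\classQSL^p$, one must show $\|\pi(f)\|\leq\|\lambda_p(f)\|$ for every $f\in L^1(G)$.

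The main tool is a Fell-type absorption principle: the map $W\colon L^p(G,E)\to L^p(G,E)$ defined by $(W\xi)(s)=\rho(s^{-1})\xi(s)$ is a surjective isometry intertwining $\rho\otimes\lambda_p$ with $\id_E\otimes\lambda_p$, so $\|(\pi\otimes\lambda_p)(f)\|=\|\lambda_p(f)\|$. Since $L^p(G,E)\cong E\otimes_p L^p(G)$ remains in $\classQSL^p$ when $E$ does, the tensor representation still lies in the class. It then remains to show that $\pi$ is weakly contained in $\pi\otimes\lambda_p$, and this is where amenability enters via Reiter's property $(P_p)$: for every $\varepsilon>0$ and compact $K\subseteq G$ there is a nonnegative unit vector $h_K\in L^p(G)$ with $\|\lambda_p(s)h_K-h_K\|_p<\varepsilon$ for $s\in K$. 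Given $\xi\in E$ with $\|\pi(f)\xi\|$ nearly maximal, a Fubini-type computation shows $\|(\pi\otimes\lambda_p)(f)(h_K\otimes\xi)\|\to\|\pi(f)\xi\|$ as $K$ exhausts $G$, yielding $\|\pi(f)\|\leq\|\lambda_p(f)\|$.

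For (3) $\Rightarrow$ (1), suppose the canonical map is a (not necessarily isometric) Banach algebra isomorphism. By the open mapping theorem, there is $C>0$ such that $\|f\|_\mathcal{E}\leq C\|\lambda_p(f)\|$ for all $f\in L^1(G)$. Since $\C\in\classL^p\subseteq\mathcal{E}$, the trivial representation $\tau(f)=\int_G f$ belongs to $\Rep_\mathcal{E}(G)$, giving $\left|\int_G f\right|\leq\|f\|_\mathcal{E}\leq C\|\lambda_p(f)\|$. Applied to $f\geq 0$, this reads $\|f\|_1\leq C\|\lambda_p(f)\|$. Since $\|f^{*n}\|_1=\|f\|_1^n$ for nonnegative $f$ (by Fubini and left-invariance of the Haar measure), applying the bound to $f^{*n}$ yields
\[
\|f\|_1^n=\|f^{*n}\|_1\leq C\|\lambda_p(f^{*n})\|\leq C\|\lambda_p(f)\|^n.
\]
Extracting $n$-th roots and letting $n\to\infty$ gives $\|f\|_1\leq\|\lambda_p(f)\|$, hence equality for all $f\geq 0$. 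This condition is equivalent to amenability of $G$ by the $L^p$-analog of Hulanicki's theorem.

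The main obstacle is the weak-containment step in (1) $\Rightarrow$ (2). The Fell absorption identity via $W$ is clean, but identifying $L^p(G,E)$ with the $p$-tensor product $E\otimes_p L^p(G)$ in a manner compatible with the $\classQSL^p$-structure, and rigorously verifying that approximately invariant vectors in $L^p(G)$ produce weak containment of $\pi$ in $\pi\otimes\lambda_p$ for general $E\in\classQSL^p$, requires careful Fubini-type manipulations in the $p$-tensor product. For general locally compact $G$ one would likely also reduce to the $\sigma$-compact or second-countable setting using the Kakutani-Kodaira technique employed earlier in the paper.
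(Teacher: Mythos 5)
Your proposal is essentially correct, but it follows a genuinely different route from the paper. The paper's proof is short because it outsources both nontrivial implications to the duality theory of the $p$-analog $B_{p'}(G)$ of the Fourier--Stieltjes algebra: for (1)$\Rightarrow$(2) it invokes Runde's Theorem~6.7 (which identifies $F^p_{\subQS}(G)'$ and $F^p_\lambda(G)'$ with $B_{p'}(G)$ isometrically when $G$ is amenable) together with \autoref{lma:HB}, and for (3)$\Rightarrow$(1) it invokes Neufang--Runde's criterion that $G$ is amenable exactly when the constant function $1$ lies in the image of $F^p_\lambda(G)'$ in $B_{p'}(G)$. You instead argue directly on the representation side: Fell absorption plus Reiter's property $(P_p)$ for (1)$\Rightarrow$(2) --- which is precisely the ``$C^*$-algebraic'' strategy the paper attributes to Phillips's independent proof for the class $\classL^p$ --- and, for (3)$\Rightarrow$(1), the trivial representation combined with the power trick $\|f\|_1^n=\|f^{\ast n}\|_1\leq C\|\lambda_p(f)\|^n$ and an $L^p$-Hulanicki theorem (which does reduce to the classical $p=2$ case by Riesz--Thorin interpolation of $r\mapsto\|\lambda_r(f)\|$ between $r=1$ and $r=2$, plus the duality $\|\lambda_p(f)\|=\|\lambda_{p'}(f^\sharp)\|$ for $p>2$). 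Your approach is more self-contained and arguably more elementary for (3)$\Rightarrow$(1); the paper's buys brevity and avoids the vector-valued Fubini issues entirely. Two points in your sketch deserve care: the identity $\|(\id_E\otimes\lambda_p)(f)\|=\|\lambda_p(f)\|$ is \emph{not} automatic for arbitrary Banach spaces $E$ (vector-valued extensions of $L^p$-bounded operators can fail), and is exactly where membership of $E$ in $\classQSL^p$ must be used (Fubini for $L^p$-spaces, then restriction to invariant subspaces and passage to quotients); and the Kakutani--Kodaira reduction you mention at the end is not actually needed here, since Reiter's property and the $C_c(G)$-density argument work for arbitrary locally compact amenable groups.
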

\begin{proof}
We begin by introducing the notation that will be used to prove the equivalences.

Let $p'$ be the dual exponent of $p$.
Let $B_{p'}(G)$ be the $p'$-analog of the Fourier-Stieltjes algebra, as introduced in \cite{Run05ReprQSL}.
By definition, $B_{p'}(G)$ is the set of coefficient functions of representations of $G$ on $\QSL^p$-spaces.
We may think of $B_{p'}(G)$ as a subalgebra of the algebra $C_b(G)$ of bounded continuous functions on $G$,
except that the norm of $B_{p'}(G)$ is not induced by the norm of $C_b(G)$.

Under the canonical identification of $\mathrm{UPF}_p(G)$ and $F^p_{\mathrm{QS}}(G)$ (see \autoref{rmk:FpG}),
Theorem~6.6 in \cite{Run05ReprQSL} provides a canonical isometric isomorphism
\[
F^p_{\subQS}(G)' \cong B_{p'}(G) \subseteq C_b(G).
\]

We now turn to the equivalence between the statements.

(1) implies (2).
It is enough to show that the map $\kappa_u\colon F^p_{\mathrm{QS}}(G)\to F^p_\lambda(G)$
is isometric.
Under the assumption that $G$ is amenable, it follows from Theorems~6.7 in \cite{Run05ReprQSL} that
the dual map $\kappa'\colon F^p_\lambda(G)' \to F^p_{\subQS}(G)'$ of $\kappa$, is an isometric isomorphism.
Indeed, with the notation used there, and writing $\cong$ for isometric isomorphism, we have
\[
F^p_\lambda(G)' \cong \PF_p(G)' \xrightarrow{\cong} B_{p'}(G) \cong F^p_{\subQS}(G)'.
\]
It thus follows from \autoref{lma:HB} that $\kappa_u$ is an isometric isomorphism, as desired.

(2) implies (3). Clear.

(3) implies (1). It is enough to show the result assuming that $\kappa\colon F^p(G)\to F^p_\lambda(G)$ is
an isomorphism.

We regard the dual of $\kappa_u$ as a map $\kappa_u'\colon F^p_\lambda(G)'\to B_{p'}(G)\subseteq C_b(G)$.
By Theorem~4.1 in \cite{NeuRun09ColumnRowQSL}, a locally compact group $G$ is
amenable if and only if the constant function $1$ on $G$ belongs
to the image of $F^p_\lambda(G)'$ in $B_{p'}(G)\subseteq C_b(G)$.
Note that $1$ always belongs to the image of $F^p(G)'$ in
$B_{p'}(G)$, since it is a coefficient function of the trivial
representation (on an $L^{p'}$-space).
Now, if $\kappa\colon F^p(G)\to F^p_\lambda(G)$ is an isomorphism,
then so is the dual map $\kappa'\colon F^p_\lambda(G)'\to F^p(G)'$.
It follows that $1$ is in the image of $F^p_\lambda(G)'$ in $B_p(G)\subseteq
C_b(G)$, and hence $G$ is amenable.
\end{proof}

One must exclude $p=1$ in \autoref{thm:AmenTFAE}, since the canonical maps are \emph{always}
isometric in this case, as was shown in \autoref{prp:F1} and \autoref{rem:poneReduced}.

\autoref{thm:AmenTFAE} raises the following natural questions:

\begin{qst} \label{qst1}
Let $G$ be a locally compact group and let $p\in (1,\infty)$. Is the canonical map $\kappa\colon F^p(G)\to F^p_\lambda(G)$ always surjective?
\end{qst}

The question above has an affirmative answer if either $p=2$ or $G$ is amenable, and there are no known
counterexamples.

If \autoref{qst1} has a negative answer, it would be interesting to explore the following:

\begin{pbm} \label{pbm1}
Let $p\in (1,\infty)\setminus\{2\}$. Characterize those locally compact groups $G$ for which
the canonical map $\kappa\colon F^p(G)\to F^p_\lambda(G)$ is injective.
\end{pbm}

If the answer to
\autoref{qst1} is `yes', then the problem above would have the expected solution: injectivity of
the canonical map $\kappa\colon F^p(G)\to F^p_\lambda(G)$ would be equivalent to amenability, by
the equivalence between (1) and (3) in \autoref{thm:AmenTFAE}, by the Open Mapping Theorem.

Although unlikely, the answer to \autoref{pbm1} could in principle depend on $p$.

\begin{nota}\label{notation}
If $G$ is a locally compact amenable group and $p\in (1,\infty)$, or if $G$ is any locally compact group and $p=1$,
we will write $F^p(G)$ instead of any of $F^p_\subQS(G), F^p_\subS(G), F^p_\subQ(G)$ or $F^p_\lambda(G)$,
since they are isometrically isomorphic by \autoref{thm:AmenTFAE}, \autoref{prp:F1} and \autoref{rem:poneReduced}.
\end{nota}

In the discrete case, amenability of the group is characterized by amenability of any of its associated universal
enveloping algebras.

\begin{thm}
\label{thm:FpGamenableIFF}
Let $G$ be a locally compact group and let $p\in [1,\infty)$.
Let $\mathcal{E}$ be any of the classes $\classQSL^p$, $\classQL^p$, $\classSL^p$, or $\classL^p$.
If $G$ is amenable, then so is $F_\mathcal{E}(G)$. The converse is true if $G$ is discrete.
\end{thm}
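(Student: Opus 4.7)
The plan has two distinct parts. For the forward direction (amenability of $G$ implies amenability of $F_\mathcal{E}(G)$), the key ingredients are: (i) Johnson's theorem, giving that $L^1(G)$ is an amenable Banach algebra whenever $G$ is amenable; and (ii) the standard stability fact that amenability of Banach algebras passes through continuous homomorphisms with dense range. Since the canonical map $L^1(G) \to F_\mathcal{E}(G)$ is contractive with dense range by construction (cf.\ \autoref{df:universalCompletion}), the conclusion is immediate, for any of the four classes $\mathcal{E}$.

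For the converse, assume $G$ is discrete and $F_\mathcal{E}(G)$ is amenable for some $\mathcal{E}$. The inclusion $\classL^p \subseteq \mathcal{E}$ gives $\|f\|_{\classL^p} \leq \|f\|_{\mathcal{E}}$ for every $f \in L^1(G)$, so by \autoref{rmk:ExistenceCanonicalMaps} there is a canonical contractive homomorphism $F_\mathcal{E}(G) \to F^p(G)$ with dense range. Applying again the stability of amenability under dense-range homomorphisms, $F^p(G)$ is amenable. Then \autoref{thm:CanMaps} supplies the contractive homomorphism with dense range
\[
\gamma_{p,2}\colon F^p(G) \to F^2(G) = C^*(G),
\]
whose existence is guaranteed regardless of whether $p \leq 2$ or $p \geq 2$. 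By one more application of the same stability principle, we conclude that the full group $C^*$-algebra $C^*(G)$ is amenable as a Banach algebra.

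The main (and essentially only) obstacle is the final step: extracting amenability of the discrete group $G$ from amenability of $C^*(G)$. This is a classical fact, which I would invoke in the following form. By the theorem of Connes and Haagerup, amenability of a $C^*$-algebra is equivalent to nuclearity, so $C^*(G)$ is nuclear. For $G$ discrete, Lance's theorem and its converse then give the equivalence between nuclearity of $C^*(G)$ and amenability of $G$, yielding the desired conclusion. (Alternatively, one can argue directly: a bounded approximate diagonal for $C^*(G)$, combined with the availability of the group unitaries $\delta_s \in C^*(G)$ for $s \in G$ that comes with discreteness, can be used to produce a topologically invariant mean on $\ell^\infty(G)$ by a standard averaging and weak-$*$ compactness argument.) In either approach, discreteness of $G$ is used crucially in the passage from $C^*(G)$ back to $G$, and it is precisely this step that prevents the argument from covering the non-discrete case.
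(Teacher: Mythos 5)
Your proposal is correct and follows essentially the same route as the paper: Johnson's theorem plus stability of amenability under dense-range continuous homomorphisms for the forward direction, and for the converse the chain $F_\mathcal{E}(G)\to F^p(G)\xrightarrow{\gamma_{p,2}} C^*(G)$ followed by the classical equivalence (for discrete $G$) between amenability of $C^*(G)$ and amenability of $G$. The only cosmetic difference is that you spell out the Connes--Haagerup/Lance argument where the paper simply cites Theorem~2.6.8 of \cite{BroOza08Book}.
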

\begin{proof}
It is a well known result due to B. Johnson (see \cite{Joh72CohomologyBAlg}) that $G$ is amenable if and only if the group algebra $L^1(G)$ is amenable (even if $G$ is not discrete).
If $G$ is amenable, then so is $L^1(G)$, and hence also $F_\mathcal{E}(G)$ by Proposition~2.3.1 in \cite{Run02BookAmen}, since the image of $L^1(G)$ in $F_\mathcal{E}(G)$ is dense.

Conversely, suppose that $F_\mathcal{E}(G)$ is amenable and that $G$ is discrete.
Then $F^p(G)$ is amenable again by Proposition~2.3.1 in \cite{Run02BookAmen}, because there is a contractive homomorphism $F_\mathcal{E}(G)\to F^p(G)$ with dense range.
Another use of Proposition~2.3.1 in \cite{Run02BookAmen}, this time with the map $\gamma_{p,2}\colon F^p(G)\to F^2(G)=C^*(G)$ constructed in \autoref{thm:CanMaps}, shows that $C^*(G)$ must be amenable in this case.
Now Theorem~2.6.8 in \cite{BroOza08Book} implies that $G$ is amenable.
\end{proof}

The following question naturally arises:

\begin{qst}
\label{qst:JohnsonGeneralized}
Does amenability of $F^p(G)$, for $p\neq 2$, characterize amenability of $G$ in full generality? \end{qst}

For $p=1$, the answer is yes, by Johnson's Theorem (\cite{Joh72CohomologyBAlg}). The result is known to be false for $p=2$.
Indeed, Connes proved in \cite{Con76ClassifInjFactors} that if $G$ is a connected Lie group, then $C^*(G)$ (and hence $C^*_\lambda(G))$ is amenable.
However, there are non-amenable connected Lie groups, such as $\mbox{SL}_2(\R)$ (whose group $C^*$-algebra is even type I).
We do not know, however, whether $F^p(\mbox{SL}_2(\R))$ is amenable for $p\neq 1,2$.
\vspace{5pt}

We close this subsection with the computation of the maximal ideal space of $F^p(G)$ when $G$ is an
abelian locally compact group. This result, in this form, will be crucial in the proof of Theorem~3.4 in
\cite{GarThi14arX:Functoriality}. The result is almost certainly well-known, but we have not been able to find a
reference in the literature.

\begin{prop}
\label{prop:MaxIdSp}
Let $G$ be an abelian locally compact group, and let $p\in [1,\infty)$. Then there is a canonical homeomorphism $\phi\colon \Max(F^p(G))\to \widehat{G}$.
Moreover, the Gelfand transform
$$\Gamma_p\colon F^p(G)\to C_0(\widehat{G})$$
is injective, contractive and has dense range.
\end{prop}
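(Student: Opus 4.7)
The strategy is to first identify $\Max(F^p(G))$ with $\widehat{G}$, and then analyze $\Gamma_p$ under this identification. Since $G$ is abelian and hence amenable, \autoref{thm:AmenTFAE} together with \autoref{prp:F1} and \autoref{rem:poneReduced} provide an isometric isomorphism $F^p(G)\cong F^p_\lambda(G)$, so $F^p(G)$ is a commutative Banach algebra faithfully represented on $L^p(G)$. Recall that classical Gelfand theory identifies $\Max(L^1(G))$ with $\widehat{G}$.

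The bijection $\phi\colon \Max(F^p(G))\to \widehat{G}$ is built in two steps. Every character of $F^p(G)$ restricts, via the dense contractive map $L^1(G)\to F^p(G)$, to a character of $L^1(G)$; this yields a weak-$*$ continuous injection $\Max(F^p(G))\to \widehat{G}$. Conversely, each $\chi\in\widehat{G}$ is an isometric representation of $G$ on the one-dimensional $L^p$-space $\C$, whose integrated form lies in $\Rep_{\classL^p}(G)$ and so extends by universality to a character $\phi_\chi$ of $F^p(G)$. This produces a bijective inverse, which is weak-$*$ continuous thanks to density of $L^1(G)$ in $F^p(G)$ and the uniform bound $\|\phi_\chi\|\leq 1$, so $\phi$ is a homeomorphism.

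It is standard that $\Gamma_p$ is contractive, and its restriction to $L^1(G)$ is the Fourier transform $\mathcal{F}$. The image $\mathcal{F}(L^1(G))$ is a conjugation-closed, point-separating subalgebra of $C_0(\widehat{G})$ that vanishes nowhere, hence is dense by Stone--Weierstrass, giving dense range for $\Gamma_p$. The main obstacle is injectivity, and the plan here is to exploit the faithful realization $F^p(G)\cong F^p_\lambda(G)\subseteq \B(L^p(G))$. Given $a$ with $\Gamma_p(a)=0$, choose $(f_n)\subseteq L^1(G)$ with $\lambda_p(f_n)\to a$ in operator norm; then $\hat{f_n}=\Gamma_p(\lambda_p(f_n))\to 0$ uniformly on $\widehat{G}$, and Plancherel gives $\|\lambda_2(f_n)\|_{\B(L^2(G))}=\|\hat{f_n}\|_\infty \to 0$. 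For every $\xi\in C_c(G)\subseteq L^p(G)\cap L^2(G)$, the functions $f_n\ast\xi$ converge to $a\xi$ in $L^p$ and to $0$ in $L^2$; passing to a common subsequence converging pointwise almost everywhere forces $a\xi=0$, and density of $C_c(G)$ in $L^p(G)$ yields $a=0$.
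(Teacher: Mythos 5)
Your proposal is correct, and it diverges from the paper's proof in an instructive way, chiefly in how injectivity of $\Gamma_p$ is handled. For the homeomorphism $\Max(F^p(G))\cong\widehat{G}$, the two arguments are close in substance: you restrict characters along the dense-range map $L^1(G)\to F^p(G)$ and extend characters of $G$ via their one-dimensional $L^p$-representations using universality, whereas the paper pulls characters back along $\gamma_{p,2}\colon F^p(G)\to C^*(G)$ and along $L^1(G)\to F^p(G)$, invoking \autoref{lma:HB} and naturality of the Gelfand transform; these are essentially the same bijection packaged differently (your $\phi_\chi$ is exactly $\chi\circ\gamma_{p,2}$). For density of the range, you apply Stone--Weierstrass to $\mathcal{F}(L^1(G))$ directly, while the paper deduces it from surjectivity of $\Gamma_2$ and the dense range of $\gamma_{p,2}$; both are fine. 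The real difference is injectivity: the paper simply cites Theorem~4 in Section~1.5 of \cite{Der11ConvOps}, whereas you give a self-contained argument --- identify $F^p(G)$ with $F^p_\lambda(G)$ by amenability (\autoref{thm:AmenTFAE}), use contractivity of $\Gamma_p$ plus the Plancherel identity $\|\lambda_2(f)\|=\|\hat{f}\|_\infty$ to get $\lambda_2(f_n)\to 0$, and then run the pointwise-a.e.\ subsequence trick on $f_n\ast\xi$ for $\xi\in C_c(G)$. This is precisely the technique the paper itself uses to prove injectivity of $\gamma^\lambda_{p,q}$ in \autoref{thm:CanMapsRed}, so your route has the merit of keeping the proof internal to the paper's toolkit at the cost of routing everything through the reduced algebra; the paper's citation-based route is shorter but less transparent. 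All the auxiliary steps you need (that $\C$ is an $L^p$-space so characters of $G$ lie in $\Rep_{\classL^p}(G)$, that $L^1\ast L^2\subseteq L^2$, and the $\varepsilon/3$ argument for continuity of $\chi\mapsto\phi_\chi$) check out.
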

\begin{proof}
It is well-known that $\Max(L^1(G))$ is canonically homeomorphic to $\widehat{G}$, and that
the following diagram is commutative:
\begin{eqnarray*} \xymatrix{
L^1(G) \ar[rr] \ar[d]_-{\Gamma_1}&& C^*(G)\ar[d]^-{\Gamma_2}\\
C_0(\widehat{G})&& C_0(\widehat{G})\ar[ll]^-{\mathrm{id}_{C_0(\widehat{G})}}.
}\end{eqnarray*}

Set $X=\Max(F^p(G))$.
The canonical map $\gamma_{p,2}\colon F^p(G)\to C^*(G)$ induces a continuous function $\phi\colon \widehat{G}\to X$. We claim that $\phi$ is injective.
Indeed, $\phi$ is the restriction of $\gamma_{p,2}'\colon C^*(G)'\to F^p(G)'$ to the multiplicative linear functionals of norm one.
Since $\gamma_{p,2}'$ is injective by part (1) of \autoref{lma:HB}, the claim follows.

A similar argument shows that the canonical inclusion $L^1(G)\to F^p(G)$ induces an injective continuous function $\psi\colon X\to \widehat{G}$.
By naturality of the Gelfand transform, we must have $\psi\circ\phi=\id_{\widehat{G}}$, showing that $\widehat{G}$
and $X$ are homeomorphic.

For the last claim, it only remains to show that $\Gamma_p$ is injective and has dense range.
Injectivity follows from Theorem~4 in Section~1.5 of \cite{Der11ConvOps}. Density of its range follows from the
facts that $\gamma_{p,2}$ has dense range by \autoref{thm:CanMaps}, that $\Gamma_2$ is an isometric isomorphism, and
that the diagram below commutes:
\begin{eqnarray*} \xymatrix{
L^1(G) \ar[r] \ar[d]_-{\Gamma_1}& F^p(G)\ar[r]^-{\gamma_{p,2}}\ar[d]^-{\Gamma_p}& C^*(G)\ar[d]^-{\Gamma_2}\\
C_0(\widehat{G})& C_0(\widehat{G})\ar[l]^-\cong&C_0(\widehat{G})\ar[l]^-\cong .
}\end{eqnarray*}
\end{proof}

\subsection{Canonical maps $F^p(G)\to F^q(G)$ revisted}

In this subsection, we will use \autoref{thm:AmenTFAE} to obtain further information about the maps
$\gamma_{p,q}$ constructed in \autoref{thm:CanMaps}, by regarding them as maps between the algebras
of $p$- and $q$-pseudofunctions; see \autoref{cor:FpG}.

We begin with a general discussion (see also Section 8 in \cite{Her73SynthSubgps}).

\begin{df}
Let $E$ be a reflexive Banach space.
It is well-known that the Banach algebra $\B(E)$ is the Banach space dual of the projective
tensor product $E\widehat{\otimes}E'$.
The weak*-topology inherited by $\B(E)$ from this identification is usually called the
\emph{ultraweak topology} on $\B(E)$.
\end{df}

Given a Banach space $E$, we write $\langle\cdot,\cdot\rangle_{E,E'}$ for the canonical pairing $E\times E'\to\C$.
Given $\xi\in E$ and given $\eta\in E'$, we write $\xi\otimes\eta$ for the simple tensor product in
$E\widehat{\otimes}E'$.
Regarding an operator $a\in \B(E)$ as a functional on $E\widehat{\otimes}E'$,
the action of $a$ on $\xi\otimes \eta$ is given by
\begin{align}
\label{eq1}
\langle a,\xi\otimes\eta\rangle_{(E\widehat{\otimes}E')',E\widehat{\otimes}E'}
= \langle a\xi,\eta \rangle_{E,E'}.
\end{align}

\begin{df}\label{df:UWKconvergence}
Let $E$ be a Banach space.
Let $(a_j)_{j\in J}$ be a net of operators in $\B(E)$, and let $a\in \B(E)$ be another operator.
We say that $(a_j)_{j\in J}$ \emph{converges ultraweakly} to $a$, if for every $x\in E\widehat{\otimes}E'$
we have
\[\lim_{j\in J} \left|\langle a_j,x\rangle - \langle a,x\rangle\right|=0.\]
\end{df}

The ultraweak topology on $\B(E)$ should not be confused with its weak operator topology.
By definition, a net $(a_j)_{j\in J}$ in $\B(E)$ converges in the weak operator topology to an operator $a\in \B(E)$
if for every $\xi\in E$ and every $\eta\in E'$, we have
\[\lim_{j\in J} \left|\langle a_j\xi ,\eta\rangle - \langle a\xi,\eta\rangle\right|.\]

\begin{rmk}
Since a pair $(\xi,\eta)\in E\times E'$ defines an element $x=\xi\otimes\eta$ in $E\widehat{\otimes}E'$, it follows from \eqref{eq1}, that the ultraweak topology is stronger than the weak operator topology. On the other hand, it is well-known
that the ultraweak topology and the weak operator topology agree on (norm) bounded subsets of $\B(E)$.
\end{rmk}

The following class of Banach algebras will be needed in the proof of \autoref{thm:CanMapsRed}.

\begin{df}\label{df:pseudomeasures}
Let $G$ be a locally compact group and let $p\in [1,\infty)$.
\begin{enumerate}
\item
The algebra of \emph{$p$-pseudomeasures} on $G$, denoted
$PM_p(G)$, is the ultraweak closure of $F^p_\lambda(G)$ in $\B(L^p(G))$.
\item
The algebra of \emph{$p$-convolvers} on $G$, denoted $CV_p(G)$, is the bicommutant of $F^p_\lambda(G)$
in $\B(L^p(G))$.
\end{enumerate}
\end{df}

Algebras of pseudomeasures and convolvers on groups have been thoroughly studied since their
introduction by Herz in Section 8 in \cite{Her73SynthSubgps}.
It is clear that $PM_p(G)\subseteq CV_p(G)$, and it is conjectured that they are equal for every locally
compact group and every H\"older exponent $p\in [1,\infty)$. The reader is referred to \cite{DawSpr14arX:ApproxPropConvPM}
for a more thorough description of the problem, as well as for the available partial results.

\begin{thm}\label{thm:CanMapsRed}
Let $G$ be a locally compact group, and let $p,q\in [1,\infty)$ with
either $p\leq q\leq 2$ or $2\leq q\leq p$.
Assume that
\begin{align}\label{eqn:ineq}\|\lambda_q(f)\|_q \leq \|\lambda_p(f)\|_p\end{align}
for every $f\in L^1(G)$.
Then the identity map on $L^1(G)$ extends to a contractive map
\[
\gamma^\lambda_{p,q}\colon F^p_\lambda(G)\to F^q_\lambda(G),
\]
with dense range. Moreover,

\begin{enumerate}
\item
The map $\gamma_{p,q}^\lambda$ is injective.
\item
Suppose that $CV_p(G)=PM_p(G)$ and $CV_q(G)=PM_q(G)$. If $p\neq q$, then
$\gamma_{p,q}^\lambda$ is not surjective unless $G$ is finite.
\end{enumerate}
\end{thm}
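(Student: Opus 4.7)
The first assertion is immediate: the hypothesis $\|\lambda_q(f)\|_q\leq\|\lambda_p(f)\|_p$ implies that $\lambda_p(f)\mapsto\lambda_q(f)$ is well-defined and contractive on $\lambda_p(L^1(G))$, so it extends by continuity to a contractive homomorphism $\gamma_{p,q}^\lambda\colon F^p_\lambda(G)\to F^q_\lambda(G)$, whose image contains the dense subspace $\lambda_q(L^1(G))$ of $F^q_\lambda(G)$.

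For part (1), let $a\in F^p_\lambda(G)$ satisfy $\gamma_{p,q}^\lambda(a)=0$, and pick $(f_n)\subseteq L^1(G)$ with $\lambda_p(f_n)\to a$ in operator norm; then automatically $\lambda_q(f_n)\to 0$ in $\B(L^q(G))$. For any $\xi\in C_c(G)\subseteq L^p(G)\cap L^q(G)$, the sequence $f_n*\xi$ converges to $a\xi$ in $L^p$-norm and to $0$ in $L^q$-norm; extracting a common subsequence converging almost everywhere forces $a\xi=0$. Density of $C_c(G)$ in $L^p(G)$ then yields $a=0$.

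For part (2), assume $\gamma^\lambda_{p,q}$ is surjective. By the Open Mapping Theorem it is a Banach algebra isomorphism with bounded inverse, so there exists $C>0$ with $\|\lambda_p(f)\|_p\leq C\|\lambda_q(f)\|_q$ for all $f\in L^1(G)$. The plan is to use the hypotheses $CV_p(G)=PM_p(G)$ and $CV_q(G)=PM_q(G)$ to promote this to an isomorphism of the convolver algebras. For $T\in PM_p(G)$, write it as the bounded ultraweak limit of a net $(\lambda_p(f_\alpha))$ with $f_\alpha\in L^1(G)$; the norm equivalence makes $(\lambda_q(f_\alpha))$ a bounded net in $F^q_\lambda(G)$. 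The key observation is that for $\xi,\eta\in C_c(G)$, both matrix coefficients $\langle\lambda_p(f_\alpha)\xi,\eta\rangle$ and $\langle\lambda_q(f_\alpha)\xi,\eta\rangle$ reduce to the same integral $\int_G f_\alpha(t)\left(\int_G\xi(u)\eta(tu)\,du\right)\,dt$. Combined with boundedness and density of $C_c(G)$, this lets one extract an ultraweak limit $\widetilde\gamma(T)\in CV_q(G)=PM_q(G)$, and standard arguments show that $\widetilde\gamma\colon PM_p(G)\to PM_q(G)$ is a well-defined weak$^*$-continuous Banach algebra isomorphism.

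Approximating left translation by $s\in G$ via an $L^1$-approximate identity concentrated near $s$ shows that $\widetilde\gamma$ intertwines the regular representations of $G$ on $L^p(G)$ and $L^q(G)$. Taking pre-adjoints then yields a Banach space isomorphism between the Fig\`a-Talamanca-Herz algebras $A_q(G)$ and $A_p(G)$, which the intertwining property forces to act as the identity on pointwise values; hence $A_p(G)=A_q(G)$ as subspaces of $C_0(G)$. The main obstacle, and the crux of the argument, is to rule out this equality for infinite $G$ when $p\neq q$: one appeals to (or adapts) the classical theory of Fig\`a-Talamanca-Herz algebras, which distinguishes $A_r(G)$ at different H\"older exponents whenever $G$ is infinite, with Herz's inclusion theorems handling the amenable case and the $CV{=}PM$ hypothesis bridging the general case.
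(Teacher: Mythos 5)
Your construction of $\gamma^\lambda_{p,q}$ and your proof of injectivity in part (1) are correct and essentially identical to the paper's argument (Cauchy sequences of $\lambda_p(f_n)$, then the pointwise-a.e.\ subsequence trick applied to $f_n\ast\xi$ for $\xi\in C_c(G)$). Part (2) also follows the paper's architecture up to a point: Open Mapping Theorem to get the reverse norm inequality, promotion to a surjection $PM_p(G)\to PM_q(G)$ via boundedness and the identity $\langle\lambda_p(f)\xi,\eta\rangle=\langle\lambda_q(f)\xi,\eta\rangle$ for $f,\xi,\eta\in C_c(G)$, and then the hypothesis $CV=PM$ to land in the convolver algebras.

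The problem is the last step, which you yourself identify as ``the main obstacle, and the crux of the argument'' and then do not carry out: you reduce to the statement that $A_p(G)\ne A_q(G)$ (equivalently, that $CV_p(G)\to CV_q(G)$ cannot be a surjection) for infinite $G$ and $p\ne q$ on the same side of $2$, and appeal vaguely to ``the classical theory of Fig\`a-Talamanca-Herz algebras.'' This is not a routine fact one can wave at: Herz's results give \emph{inclusions} $A_p(G)\subseteq A_q(G)$ in the amenable case, not the strictness you need, and the noninclusion is a genuinely hard theorem. The paper closes the gap by citing Theorem~2 of \cite{CowFou76InclNonincl}, which says precisely that a surjection $CV_p(G)\to CV_q(G)$ of the kind you construct is impossible for infinite $G$; this single citation is the entire content of the step you left open. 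Your additional detour through the preduals (taking pre-adjoints, arguing the map is the identity on pointwise values, identifying $A_p(G)$ with $A_q(G)$ inside $C_0(G)$) is not needed if one works directly at the level of convolvers as the paper does, and it introduces further unverified claims (weak$^*$-continuity of $\widetilde\gamma$, the intertwining argument) that the paper's route avoids. So: same strategy, but the decisive external input for (2) is missing rather than merely implicit, and you should state and cite it precisely.
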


We emphasize that the assumptions $CV_p(G)=PM_p(G)$ and $CV_q(G)=PM_q(G)$ in part (2)
of this theorem, are conjecturally not a restriction; see \cite{DawSpr14arX:ApproxPropConvPM}.

\begin{proof}
We show first the existence of $\gamma^\lambda_{p,q}$.
Let $a$ be an operator in $F^p_\lambda(G)$ and choose
a sequence $(f_n)_{n\in\N}$ in $L^1(G)$ such that $\lim\limits_{n\to\infty}\left\|a-\lambda_p(f_n)\right\|=0$.
Then $(\lambda_p(f_n))_{n\in\N}$ is a Cauchy sequence in $\B(L^p(G))$, and hence
$(\lambda_q(f_n))_{n\in\N}$ is a Cauchy sequence in $\B(L^q(G))$ as well, by the inequality in (\ref{eqn:ineq}).
Set
\[
\gamma^\lambda_{p,q}(a) = \lim_{n\to\infty} \lambda_q(f_n).
\]

It is straightforward to check that this definition is independent of the choice of the sequence
$(f_n)_{n\in\N}$ in $L^1(G)$.
Moreover, it is clear that the resulting homomorphism $\gamma^\lambda_{p,q}\colon F_\lambda^p(G)\to F^q_\lambda(G)$
is contractive and has dense range.

(1). Let us show that $\gamma^\lambda_{p,q}$ is injective.
%We identify $F^p(G)$ with $F^p_\lambda(G)\subseteq\B(L^p(G))$ via \autoref{thm:AmenTFAE}.
Fix $a\in F_\lambda^p(G)$ and choose a sequence $(f_n)_{n\in\N}$ in $L^1(G)$ such that
$\lambda_p(f_n)$ converges to $a$ in $\B(L^p(G))$.
Assume that $\gamma^\lambda_{p,q}(a)=0$, so that $(\lambda_q(f_n))_{n\in\N}$ converges to the zero operator on $L^q(G)$.
In order to arrive at a contradiction, suppose that $a\neq 0$. Choose $\xi\in C_c(G)$ such that
$a\xi\neq 0$. Set $\eta=a\xi$. Since
\[\lim_{n\to\infty}\|f_n\ast \xi-\eta\|_p=0,\]
upon passing to a subsequence, we may assume that $(f_n\ast \xi)_{n\in\N}$ converges pointwise
almost everywhere to $\eta$.

Since $(\lambda_q(f_n))_{n\in\N}$ converges to zero in $\B(L^q(G))$, it follows that $(\lambda_q(f_n)\xi)_{n\in\N}$
converges to zero in $L^q(G)$.
Again, upon passing to a subsequence, we may assume that $f_n\ast\xi$ converges pointwise
almost everywhere to zero.
This clearly implies that $\eta=0$ almost everywhere on $G$, which is a contradiction.
We deduce that $a=0$, and hence $\gamma^\lambda_{p,q}$ is injective.

Before proving part (2), let us show that $\gamma^\lambda_{p,q}$ extends to a map
\[\delta_{p,q}\colon PM_p(G)\to PM_q(G)\]
between the ultraweak closures of $F^p_\lambda(G)$ and
$F^q_\lambda(G)$ in $\B(L^p(G))$ and $\B(L^q(G))$, respectively.
The existence of such a map is well-known to the experts, so we only sketch its construction.

Let $a$ be an operator in $PM_p(G)$, and choose a
net $(f_j)_{j\in J}$ in $C_c(G)$ such that $(\lambda_p(f_j))_{j\in J}$ converges to $a$ in the ultraweak topology.
Since the sequence $(\lambda_p(f_j))_{j\in J}$ converges ultraweakly, it follows that it is norm-bounded.
Since $L^p(G)$ is a separable Banach space, the ultraweak topology is metrizable on bounded subsets of $\B(L^p(G))$,
and hence there is a sequence $(f_n)_{n\in\N}$ in $C_c(G)$ such that $\lambda_p(f_n)$ converges ultraweakly to $a$.

By the inquality in (\ref{eqn:ineq}), the sequence $(\lambda_q(f_n))_{n\in\N}$ is norm-bounded in $\B(L^q(G))$.
By the Banach-Alaoglu Theorem, the sequence $(\lambda_q(f_n))_{n\in\N}$ has an ultraweak limit point,
so there is a subsequence $(\lambda_q(f_{n_k}))_{k\in\N}$ that converges ultraweakly to an operator $b\in \B(L^q(G))$.
By construction, $b$ belongs to $PM_q(G)$.
One can show that $b$ does not depend on the choices made, so we set $\delta_{p,q}(a)=b$.

The resulting homomorphism $\delta_{p,q}\colon PM_p(G)\to PM_q(G)$ is easily seen to be contractive and to have
dense range.

(2).
If $G$ is finite, then $\gamma_{p,q}^\lambda$ is clearly surjective,
since it has dense range and $F^q_\lambda(G)$ is
finite dimensional. Conversely, assume that $G$ is infinite. We will show that $\gamma^\lambda_{p,q}$ is not surjective
using results from \cite{CowFou76InclNonincl}.

To reach a contradiction, assume that $\gamma^\lambda_{p,q}$ is surjective.
It follows from the Open Mapping Theorem and part (1) of this theorem, that $\gamma^\lambda_{p,q}$
is an isomorphism (although not necessarily isometric).
This means that there is a constant $K>0$ such that
\[
\| \lambda_p(f) \|_p \leq K \| \lambda_q(f) \|_q,
\]
for every $f\in L^1(G)$.

We claim that $\delta_{p,q}$ is also surjective.
Given $b\in PM_q(G)$,
choose a sequence $(f_n)_{n\in\N}$ in $C_c(G)$ such that the sequence of operators
$(\lambda_q(f_n))_{n\in\N}$ in $\B(L^q(G))$ converges ultraweakly to $b$.
Then the sequence $(\lambda_q(f_n))_{n\in\N}$ is norm-bounded in $\B(L^q(G))$.
It follows that
\[
\sup_{n\in\N} \| \lambda_p(f_n) \| \leq K \sup_{n\in\N} \| \lambda_q(f_n) \|<\infty,
\]
so $(\lambda_p(f_n))_{n\in\N}$ is norm-bounded in $\B(L^p(G))$ as well. By the Banach-Alaoglu Theorem,
there exists a subsequence $(\lambda_p(f_{n_k}))_{k\in\N}$ that converges ultraweakly to an
operator $a\in\B(L^p(G))$. By construction, $a$ belongs to $PM_p(G)$.

We claim that $\delta_{p,q}(a)=b$.
For $\xi,\eta,f \in C_c(G)$, we have
\[
\langle \lambda_p(f)\xi,\eta \rangle_{L^p(G),L^{p'}(G)}
=\int_G\int_G f(st^{-1})\xi(t)\eta(s) \ dsdt
= \langle \lambda_q(f)\xi,\eta \rangle_{L^q(G),L^{q'}(G)}.
\]
We deduce that
\[
\langle a\xi,\eta \rangle
= \lim_{k\to\infty} \left\langle \lambda_p(f_{n_k})\xi,\eta \right\rangle
= \lim_{k\to\infty} \left\langle \lambda_q(f_{n_k})\xi,\eta \right\rangle
= \lim_{n\to\infty} \left\langle \lambda_q(f_n)\xi,\eta \right\rangle
= \langle b\xi,\eta \rangle.
\]
Using that the weak operator topology agrees with the ultraweak topology on bounded sets, 
it follows from the definition that $\delta_{p,q}(a)=b$, as desired.

By our assumption $PM_p(G)=CV_p(G)$ and $PM_q(G)=CV_q(G)$, we can regard $\delta_{p,q}$
as a map between the respective $p$- and $q$-convolvers on $G$.
Now, the fact that $\delta_{p,q}\colon CV_p(G)\to CV_q(G)$ is surjective
contradicts Theorem~2 in \cite{CowFou76InclNonincl}
(where $CV_p(G)$ and $CV_q(G)$ are denoted by $L_p^p(G)$ and $L^q_q(G)$, respectively).
This contradiction implies that $\gamma^\lambda_{p,q}$ is not surjective, as desired.
\end{proof}

\begin{rmk}
\label{rem:CanMapsRedNExist}
Let $G$ be a locally compact group and let $p,q\in [1,\infty)$ satisfy
$\left|\tfrac{1}{q}-\tfrac{1}{2}\right|<\left|\tfrac{1}{p}-\tfrac{1}{2}\right|$.
We point out that even though the map $\gamma_{p,q}\colon F^p(G)\to F^q(G)$ constructed in
\autoref{thm:CanMaps} exists in full generality, the map $\gamma_{p,q}^\lambda\colon F_\lambda^p(G)\to
F_\lambda^q(G)$ may fail to exist for some groups and some exponents $p$ and $q$, since there may be no
constant $M>0$ such that
\[\|\lambda_q(f)\|_q\leq M\|\lambda_p(f)\|_p\]
holds for all $f\in L^1(G)$. Indeed, as it is shown in \cite{Pyt81RadialFctsFreeGps} and \cite{Pyt84ConstrConvFreeGps},
this is the case for any noncommutative free group, and for any exponents $p,q\in (1,\infty)$ with $p\neq q$.
\end{rmk}

In contrast to what was pointed out in \autoref{rem:CanMapsRedNExist},
we have that the assumptions of \autoref{thm:CanMapsRed} are satisfied in a number of situations, particularly
if $G$ is amenable. We state this explicit in the following corollary.

\begin{cor}
\label{cor:FpG}
Let $G$ be an amenable locally compact group, and let $p,q\in [1,\infty)$.
Denote by $\lambda_p\colon L^1(G)\to \B(L^p(G))$
and $\lambda_q\colon L^1(G)\to \B(L^q(G))$ the corresponding left
regular representations of $G$. If either $p\leq q\leq 2$ or $2\leq q\leq p$, then
\[\|\lambda_q(f)\|_q \leq \|\lambda_p(f)\|_p\]
for every $f$ in $L^1(G)$. Moreover,
\begin{enumerate}
\item
The map $\gamma_{p,q}$ constructed in \autoref{thm:CanMaps} is injective.
\item
If $p\neq q$, then $\gamma_{p,q}$ is surjective if and only if $G$ is finite.
\end{enumerate}
\end{cor}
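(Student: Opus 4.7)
The plan is to reduce everything to \autoref{thm:CanMapsRed} by identifying the universal and reduced completions of $L^1(G)$ in the amenable setting.

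First I would prove the inequality $\|\lambda_q(f)\|_q \leq \|\lambda_p(f)\|_p$. By \autoref{thm:CanMaps}, the identity on $L^1(G)$ extends to a contractive homomorphism $\gamma_{p,q}\colon F^p(G)\to F^q(G)$. For amenable $G$ and any exponent $r\in(1,\infty)$, \autoref{thm:AmenTFAE} gives a canonical isometric isomorphism $F^r(G)\cong F^r_\lambda(G)$; the edge case $r=1$ is covered by \autoref{prp:F1} together with \autoref{rem:poneReduced}. Transporting the contractive estimate for $\gamma_{p,q}$ along these identifications yields exactly the desired inequality on the reduced norms.

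With the inequality established, \autoref{thm:CanMapsRed} applies and produces a contractive homomorphism $\gamma_{p,q}^\lambda\colon F^p_\lambda(G)\to F^q_\lambda(G)$ with dense range. By the isometric identifications from the previous paragraph, $\gamma_{p,q}^\lambda$ agrees with $\gamma_{p,q}$ up to isomorphism. Hence part (1) of \autoref{thm:CanMapsRed} immediately gives injectivity of $\gamma_{p,q}$, proving claim~(1).

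For claim~(2), the direction ``$G$ finite implies $\gamma_{p,q}$ surjective'' is trivial: $L^1(G)$ is finite-dimensional and already dense in $F^q(G)$, and $\gamma_{p,q}$ restricts to the identity on it. For the converse, I want to invoke part (2) of \autoref{thm:CanMapsRed}, whose hypothesis is the equality $CV_p(G)=PM_p(G)$ and $CV_q(G)=PM_q(G)$. This is the one step that is not a purely formal consequence of the preceding theorems, and is the main point to verify: however, it is classical that for amenable locally compact groups the algebra of $p$-convolvers coincides with the algebra of $p$-pseudomeasures for every $p\in[1,\infty)$; see the discussion and references in \cite{DawSpr14arX:ApproxPropConvPM}. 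Once this is cited, \autoref{thm:CanMapsRed}(2) rules out surjectivity of $\gamma_{p,q}^\lambda$ whenever $p\neq q$ and $G$ is infinite, and the same conclusion transfers to $\gamma_{p,q}$ via \autoref{thm:AmenTFAE}. The main (and really the only) obstacle is therefore the appeal to the identity $CV_r(G)=PM_r(G)$ in the amenable case; everything else is a direct application of results already in the paper.
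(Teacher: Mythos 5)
Your proposal is correct and follows essentially the same route as the paper: the norm inequality comes from transporting the contractivity of $\gamma_{p,q}$ through the isometric identification $F^r(G)\cong F^r_\lambda(G)$ of \autoref{thm:AmenTFAE}, and parts (1) and (2) are then read off from \autoref{thm:CanMapsRed}. The one hypothesis you flag, $CV_r(G)=PM_r(G)$ for amenable $G$, is exactly what the paper also invokes, citing Herz (Theorem~5 in \cite{Her73SynthSubgps}) rather than \cite{DawSpr14arX:ApproxPropConvPM}; either reference suffices.
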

\begin{proof}
It is enough to assume that $1\leq p\leq q\leq 2$, since the other case can then be deduced by using duality.
Given $f\in L^1(G)$, we use \autoref{thm:AmenTFAE} at the first and third step, and \autoref{prop:LqSLp} at
the second step, to get
\[\|\lambda_q(f)\|_q=\|f\|_{\classL^q}\leq \|f\|_{\classL^p}=\|\lambda_p(f)\|_p,\]
as desired.

Part (1) follows immediately from part (1) of \autoref{thm:CanMapsRed}, since
$\gamma_{p,q}=\gamma_{p,q}^\lambda$ by amenability of $G$. Likewise, part (2) follows
from part (2) of \autoref{thm:CanMapsRed}, together with Herz's result (Theorem~5 in \cite{Her73SynthSubgps})
that $PM_r(G)=CV_r(G)$ for all $r\in [1,\infty)$ whenever $G$ is amenable.
\end{proof}

\begin{rmk}
\label{rem:ThmCofHerz}
Adopt the notation from the statement of \autoref{cor:FpG}. The fact that $\|\lambda_q(f)\|_q\leq\|\lambda_p(f)\|_p$ was
announced as Theorem~C in \cite{Her71pSpApplConv} (see the corollary on page~512 of~\cite{HerRiv72EstimatesMixNorm}
for a proof).
\end{rmk}

It is a consequence of \autoref{cor:FpG} that the Banach algebras $F^p(\Z)$ for $p\in [1,2]$,
are pairwise not \emph{canonically} isometrically isomorphic. (Here `canonical' means via a map
which is the identity on $\ell^1(\Z)$.) In a separate paper (\cite{GarThi14arX:Functoriality}), we
show that for $p,q\in [1,2]$, the algebras $F^p(\Z)$ and $F^q(\Z)$ are not
even \emph{abstractly} isometrically isomorphic; see Theorem~3.4 in \cite{GarThi14arX:Functoriality}.
\vspace{5pt}

We point out that when $G$ is abelian, the unnumbered claim in \autoref{cor:FpG} (see also
\autoref{rem:ThmCofHerz}) can be proved much more directly.
We do so in the proposition below, which is well-known to the experts and appears implicitly in the literature.

\begin{prop}\label{prop:Gabelian}
Let $G$ be an abelian locally compact group, and let $f\in L^1(G)$.
\begin{enumerate}
\item
Let $p\in (1,\infty)$. Then $\|\lambda_p(f)\|_{p}=\|\lambda_{p'}(f)\|_{p'}$.
\item
Let $p,q\in [1,\infty)$, and suppose that either $p\leq q\leq 2$ or $2\leq q\leq p$. Then
$\|\lambda_q(f)\|_{q}\leq \|\lambda_{p}(f)\|_{p}$.
\end{enumerate}
\end{prop}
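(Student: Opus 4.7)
The plan for part (1) is to combine the duality formula $\lambda_p(f^\sharp)' = \lambda_{p'}(f)$ (established in the proof of Proposition~\ref{prop:dualityReduced}, which gives $\|\lambda_p(f^\sharp)\|_p = \|\lambda_{p'}(f)\|_{p'}$) with an isometric intertwining $\lambda_p(f) \sim \lambda_p(f^\sharp)$ that is special to the abelian setting. Since $G$ is abelian (hence unimodular), the inversion map $T\colon L^p(G)\to L^p(G)$ defined by $(T\xi)(s)=\xi(s^{-1})$ is a surjective linear isometry. A direct change of variables, using commutativity of $G$ at the step that collapses $st^{-1}$ and $t^{-1}s$, shows that $T\,\lambda_p(f)\,T^{-1} = \lambda_p(f^\sharp)$, from which $\|\lambda_p(f)\|_p = \|\lambda_p(f^\sharp)\|_p = \|\lambda_{p'}(f)\|_{p'}$ follows.

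For part (2), I would invoke Riesz--Thorin interpolation. Convolution by $f$ defines a single operator $a$ which, by part~(1), is bounded on $L^p(G)$ and on $L^{p'}(G)$ with the \emph{same} norm $\|\lambda_p(f)\|_p$. In the case $p\le q\le 2$ we have $p\le q\le 2\le p'$, so there is $\theta\in[0,1]$ with $\tfrac{1}{q}=\tfrac{\theta}{p}+\tfrac{1-\theta}{p'}$, and Riesz--Thorin gives
\[
\|\lambda_q(f)\|_q \;\le\; \|\lambda_p(f)\|_p^{\theta}\,\|\lambda_{p'}(f)\|_{p'}^{\,1-\theta} \;=\; \|\lambda_p(f)\|_p.
\]
The case $2\le q\le p$ reduces to the previous one by duality: one has $p'\le q'\le 2$, hence $\|\lambda_{q'}(f)\|_{q'}\le\|\lambda_{p'}(f)\|_{p'}$, and two applications of part~(1) convert this into $\|\lambda_q(f)\|_q\le\|\lambda_p(f)\|_p$.

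The only substantive computation is the intertwining identity $T\lambda_p(f)T^{-1}=\lambda_p(f^\sharp)$; everything else is formal manipulation combined with a classical interpolation theorem. Commutativity of $G$ enters in precisely one place, namely the identification $st^{-1}=t^{-1}s$ needed to recognize the integrand after the substitution $t\mapsto t^{-1}$ as $f^\sharp\ast\xi$. This is also the step that fails for non-abelian groups, consistent with Herz's result cited in the paragraph after Proposition~\ref{prop:dualityReduced} that $\|\lambda_p(f)\|_p$ and $\|\lambda_{p'}(f)\|_{p'}$ can differ on any finite non-abelian group.
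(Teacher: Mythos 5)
Your proposal is correct and takes essentially the same route as the paper: your part (1) is the paper's computation in disguise (the paper conjugates by $\xi\mapsto\xi^\sharp$, which on a unimodular group is exactly your inversion isometry $T$, and uses commutativity at the same single step), and your part (2) is the paper's Riesz--Thorin argument, which it phrases as log-convexity of $r\mapsto\|\lambda_r(f)\|_r$ together with the symmetry $\gamma(r)=\gamma(r')$ rather than as one explicit interpolation between $p$ and $p'$. The only hairline issue, shared with the paper's own proof, is the endpoint $p=1$ in part (2), where part (1) is not available; there one simply replaces $\|\lambda_{p'}(f)\|_{p'}$ by the Young-inequality bound $\|\lambda_\infty(f)\|\le\|f\|_1=\|\lambda_1(f)\|_1$ and the interpolation goes through unchanged.
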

\begin{proof}
(1). Let $f\in L^1(G)$.
It was shown in the proof of \autoref{prop:dualityReduced} that
$\|\lambda_p(f^\sharp)\|_p=\|\lambda_{p'}(f)\|_{p'}$. Since $G$ is unimodular, we have $\|\xi^\sharp\|_p=\|\xi\|_p$
for all $\xi\in L^p(G)$. Using this fact at the third step, we get
\begin{align*} \|\lambda_p(f^\sharp)\|_p&=\sup_{\xi\in L^p(G), \xi\neq 0} \tfrac{\|f^\sharp\ast\xi\|_p}{\|\xi\|_p}\\
&=\sup_{\xi\in L^p(G), \xi\neq 0} \tfrac{\|(\xi^\sharp\ast f)^\sharp\|_p}{\|\xi\|_p}\\
&=\sup_{\xi\in L^p(G), \xi\neq 0} \tfrac{\|\xi\ast f\|_p}{\|\xi\|_p}.\end{align*}

Since $G$ is abelian, we have $\xi\ast f=f\ast\xi$ for all $\xi\in L^p(G)$, and the result follows.

(2).
Denote by $\gamma\colon (1,\infty)\to \R$ the function given by $\gamma(r)=\|\lambda_r(f)\|_r$ for all
$r\in (1,\infty)$. Then $\gamma(r)=\gamma(r')$ for all $r\in (1,\infty)$ by part (1), and $\gamma$ is log-convex by the Riesz-Thorin
Interpolation Theorem. It follows that $\gamma$ has a minimum at $r=2$, and that it is decreasing on $[1,2]$ and increasing
on $[2,\infty)$. This finishes the proof.\end{proof}

\section{\texorpdfstring{$L^p$}{Lp}-crossed products and amenability}

N.~C.~Phillips has announced that for $p\in [1,\infty)$, if $G$ is an amenable discrete group and
$\alpha\colon G\to\Aut(A)$ is an isometric action of $G$ on an $\classL^p$-operator algebra $A$, then the canonical
contractive map $\kappa\colon F^p(G,A,\alpha)\to F^p_\lambda(G,A,\alpha)$ is an isometric isomorphism. (The reader is
referred to \cite{Phi13arX:LpCrProd} for the construction of full and reduced crossed products, as well
as for the definition of the canonical map.) The proof is
analogous to the $C^*$-algebra case, and is likely to appear in a second version of the preprint
\cite{Phi13arX:LpCrProd}.

In analogy with \autoref{thm:AmenTFAE} and \autoref{thm:FpGamenableIFF}, one may be tempted to
conjecture that for $p>1$, amenability of a discrete group $G$ may be equivalent to the canonical map
$\kappa\colon F^p(G,X)\to F^p_\lambda(G,X)$ being an isometric isomorphism for
every locally compact Hausdorff $G$-space $X$, and that this in turn should be equivalent to
amenability of $F^p(G,X)$. (\autoref{thm:AmenTFAE} and \autoref{thm:FpGamenableIFF} are the case
$X=\ast$ of this statement.)
There are many examples that show that this is not true when $p=2$, but one may hope that this holds in all other cases,
because of the extra rigidity of $\classL^p$-operator crossed  (see also \autoref{qst:JohnsonGeneralized}).

However, the statement fails for every
$p\in (1,\infty)\setminus\{2\}$, and we will devote this subsection to the construction of a family of counterexamples.

A crucial notion in our proof of \autoref{thm:ComputationCP} is that of incompressible Banach algebras, which is
due to Phillips (\cite{Phi14pre:LpIncompr}).

\begin{df} Let $p\in [1,\infty)$ and let $A$ be a Banach algebra. We say that $A$ is
\emph{$p$-incompressible} if for every $L^p$-space $E$, every contractive, injective homomorphism
$\rho\colon A\to \B(E)$ is isometric.

If the H\"older exponent $p$ is clear from the context, we will just say that $A$ is incompressible.
\end{df}

Examples of $p$-incompressible Banach algebras include $\B(\ell^p_n)$ for $n\in\N$ (see Theorem 7.2 in
\cite{Phi12arX:LpCuntz}, where $\B(\ell^p_n)$ is denoted $M^p_n$), the analogs $\mathcal{O}_d^p$ of Cuntz algebras on $L^p$-spaces (see Corollary 8.10 in \cite{Phi12arX:LpCuntz}), and \ca s.

The next lemma asserts that a direct limit of $p$-incompressible Banach algebras is again
$p$-incompressible.

\begin{lma}\label{lem: DirLimIncomp}
Let $((A_\mu)_{\mu\in \Lambda},(\varphi_{\mu,\nu})_{\mu,\nu\in \Lambda})$ be a direct limit of Banach algebras with injective, contractive maps $\varphi_{\mu,\nu}\colon A_\mu\to A_\nu$ for all $\mu$ and $\nu$ in $\Lambda$ with $\mu\leq \nu$, and denote by $A$ its direct limit. Let $p\in [1,\infty)$. If $A_\mu$ is
$p$-incompressible for all $\mu$ in $\Lambda$, then so is $A$.
\end{lma}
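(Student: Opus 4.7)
The plan is to test the $p$-incompressibility of $A$ against an arbitrary contractive injective homomorphism $\rho\colon A\to \B(E)$, with $E$ an $L^p$-space, and pull the problem back to the $A_\mu$'s via the canonical maps. For each $\mu\in\Lambda$, let $\varphi_\mu\colon A_\mu\to A$ denote the canonical map into the direct limit. Because each connecting map $\varphi_{\mu,\nu}$ is injective and contractive, $\varphi_\mu$ is contractive, and standard direct-limit considerations show it is injective; moreover, the union $\bigcup_{\mu\in\Lambda}\varphi_\mu(A_\mu)$ is dense in $A$.

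Fix $\mu\in\Lambda$ and consider the composition
\[
\rho\circ\varphi_\mu\colon A_\mu\to \B(E).
\]
This map is contractive (as the composition of contractions) and injective (as the composition of injections). Since $E$ is an $L^p$-space and $A_\mu$ is $p$-incompressible, we conclude that $\rho\circ\varphi_\mu$ is isometric, so
\[
\|\rho(\varphi_\mu(a))\|_{\B(E)}=\|a\|_{A_\mu}
\]
for all $a\in A_\mu$.

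Combining this identity with the two obvious contractive inequalities gives, for all $a\in A_\mu$,
\[
\|a\|_{A_\mu}=\|\rho(\varphi_\mu(a))\|_{\B(E)}\leq \|\varphi_\mu(a)\|_A\leq \|a\|_{A_\mu},
\]
so every inequality collapses to an equality. In particular, $\|\rho(\varphi_\mu(a))\|_{\B(E)}=\|\varphi_\mu(a)\|_A$, meaning that $\rho$ is isometric on the subalgebra $\varphi_\mu(A_\mu)\subseteq A$.

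Since this holds for every $\mu\in\Lambda$, the map $\rho$ is isometric on the dense subalgebra $\bigcup_{\mu\in\Lambda}\varphi_\mu(A_\mu)$ of $A$. By continuity, $\rho$ is isometric on all of $A$, and hence $A$ is $p$-incompressible. The only genuinely subtle point is checking that the canonical maps $\varphi_\mu$ are injective into the (completed) direct limit, but this follows from the injectivity of all the $\varphi_{\mu,\nu}$; otherwise the argument is a one-line diagram chase using incompressibility of each $A_\mu$ as the single non-trivial input.
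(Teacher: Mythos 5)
Your proof is correct and follows essentially the same route as the paper's: compose $\rho$ with the canonical maps $\varphi_\mu$, use incompressibility of each $A_\mu$ to obtain the sandwich $\|a\|_{A_\mu}=\|\rho(\varphi_\mu(a))\|\leq\|\varphi_\mu(a)\|_A\leq\|a\|_{A_\mu}$, and conclude by density of $\bigcup_\mu\varphi_\mu(A_\mu)$. The paper likewise takes the injectivity of the canonical maps into the (completed) direct limit for granted, so your treatment matches it on that point as well.
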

\begin{proof} For $\mu$ in $\Lambda$, denote by $\varphi_{\mu,\infty}\colon A_\mu \to A$ the canonical contractive homomorphism into the direct limit algebra. Let $E$ be an $L^p$-space, and let
$\rho\colon A\to \B(E)$ be a contractive injective representation. Given $\mu$ in $\Lambda$, the representation
$\rho\circ\varphi_{\lambda,\infty}\colon A_\mu\to \B(E)$
is injective and contractive. Since $A_\mu$ is incompressible, it follows that $\rho\circ\varphi_{\mu,\infty}$ is isometric. Hence, for $a$ in $A_\mu$, we have
$$\|\varphi_{\mu,\infty}(a)\|\leq\|a\|_{A_\mu}=\|(\rho\circ\varphi_{\mu,\infty})(a)\|\leq \|\varphi_{\mu,\infty}(a)\|,$$
ant thus $\|(\rho\circ\varphi_{\mu,\infty})(a)\|= \|\varphi_{\mu,\infty}(a)\|$.
We conclude that $\rho|_{\varphi_{\mu,\infty}(A_\mu)}$ is isometric for all $\mu$ in $\Lambda$. Hence $\rho$ is isometric, and $A$ is incompressible.\end{proof}

We are now ready to show that for any discrete group $G$ (amenable or not) and for any $p\in [1,\infty)$, there exists
a locally compact Hausdorff $G$-space $X$ such that the canonical map
$\kappa\colon F^p(G,X)\to F^p_\lambda(G,X)$ is isometric and such that $F^p(G,X)$ is amenable.
In particular, the analog of \autoref{thm:AmenTFAE},
where one considers actions of $G$ on arbitrary topological spaces other than the one point space,
is false.
\vspace{5pt}

Recall (see, for example, \cite{Ber72HermitianProj}) that if $A$ is a Banach algebra, an element $a\in A$ is said to be
\emph{hermitian} if $\|e^{ita}\|=1$ for all $t\in \R$.
If $X$ is a locally compact Hausdorff space, then every idempotent in $C_0(X)$ is hermitian.

The terminology ``spatial partial isometry" and the notion of spatial system are borrowed from Definition~6.4 in
\cite{Phi12arX:LpCuntz}, and the notation $\overline{M}^p_G$ is taken from Example~1.6 in \cite{Phi12arX:LpCuntz}.

\begin{thm} \label{thm:ComputationCP}
Let $G$ be a discrete group, and let $\alpha\colon G\to \Aut(C_0(G))$ be the isometric
action induced by left translation of $G$ on itself. Let $p\in [1,\infty)$. Then there are natural isometric
isomorphisms
$$\xymatrix{ F^p(G,C_0(G),\alpha)\ar[r]^\kappa & F^p_\lambda(G,C_0(G),\alpha) \ar[r]& \overline{M}^p_G}.$$
Moreover, the right-hand side equals $\K(\ell^p(G))$ when $p>1$, and is strictly smaller than $\K(\ell^1(G))$ when $p=1$
and $G$ is infinite. \end{thm}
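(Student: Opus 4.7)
The first step is to exhibit the canonical non-degenerate covariant representation $(\pi_0,v)$ of $(C_0(G),G,\alpha)$ on $\ell^p(G)$, with $\pi_0\colon C_0(G)\to \B(\ell^p(G))$ acting by pointwise multiplication and $v\colon G\to \Isom(\ell^p(G))$ being the left regular representation $(v_g\xi)(s)=\xi(g^{-1}s)$. A direct calculation shows that for $s,t\in G$ the operator $\pi_0(\delta_s)v_{st^{-1}}$ equals the spatial matrix unit $e_{s,t}\colon \xi\mapsto \xi(t)\delta_s$ on $\ell^p(G)$. Hence the image of the integrated form of $(\pi_0,v)$ on $\ell^1(G,C_0(G),\alpha)$ is exactly the linear span of $\{e_{s,t}\colon s,t\in G\}$, whose norm closure is by definition $\overline{M}^p_G$. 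Since the reduced $L^p$-crossed product is defined as the closure of this image, we immediately obtain the isometric isomorphism $F^p_\lambda(G,C_0(G),\alpha)\cong \overline{M}^p_G$.

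Next I would show that $\kappa$ is isometric by reducing an arbitrary non-degenerate contractive covariant representation $(\pi,u)$ on an $L^p$-space $E$ to the canonical one. For each $s\in G$, the operator $p_s:=\pi(\delta_s)$ is a contractive idempotent on $E$, and is therefore hermitian (equivalently, spatial); the family $\{p_s\}_{s\in G}$ consists of pairwise orthogonal projections since $\delta_s\delta_t=\delta_{s,t}\delta_s$, and by non-degeneracy $\sum_{s\in F}p_s\to \id_E$ strongly as $F$ ranges over finite subsets of $G$. Spatiality gives $\|\sum_{s\in F}p_s\xi\|^p=\sum_{s\in F}\|p_s\xi\|^p$ for every $\xi\in E$; combined with the fact that $u_s$ restricts to an isometric isomorphism $p_eE\to p_sE$ (by the covariance relation $u_sp_eu_s^{-1}=p_s$), this produces an isometric identification $\Phi\colon E\xrightarrow{\cong}\ell^p(G,F_0)$, where $F_0=p_eE$, under which $\pi$ acts diagonally on the index and $u$ acts by left translation. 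Under $\Phi$, the integrated form $(\pi\rtimes u)(f)$ corresponds to $T_f\otimes \id_{F_0}$, where $T_f\in \overline{M}^p_G$ is the image of $f$ under the canonical representation; a standard tensor product argument for $L^p$-spaces gives $\|T_f\otimes \id_{F_0}\|=\|T_f\|_{\overline{M}^p_G}$, yielding the required isometricity of $\kappa$.

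For the comparison with $\K(\ell^p(G))$, the inclusion $\overline{M}^p_G\subseteq \K(\ell^p(G))$ is automatic since $\overline{M}^p_G$ is the norm closure of finite-rank operators. When $p>1$, the reverse inclusion follows from duality: denoting by $P_F$ the projection onto $\ell^p(F)$ for finite $F\subseteq G$, one has $P_F\to \id$ strongly on both $\ell^p(G)$ and $\ell^{p'}(G)$; for compact $T$ one has $\|T-P_FT\|\to 0$ since strong convergence is uniform on totally bounded sets, and $\|T-TP_F\|=\|T^*-P_FT^*\|\to 0$ since $T^*$ is compact on $\ell^{p'}(G)$. Hence $P_FTP_F\to T$ in norm and $T\in \overline{M}^p_G$. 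For $p=1$ and $G$ infinite I would exhibit the explicit counterexample $T\xi=\bigl(\sum_{s\in G}\xi(s)\bigr)\delta_e$: this is a rank-one (hence compact) operator on $\ell^1(G)$, but $\|T\delta_t\|_1=1$ for every $t\in G$ whereas every finitely supported matrix $A$ satisfies $A\delta_t=0$ outside a finite set, so $\|T-A\|\geq 1$ for all such $A$ and thus $T\notin \overline{M}^1_G$. The principal obstacle in the proof is the decomposition $E\cong \ell^p(G,F_0)$ in the second paragraph, which depends on the classical but nontrivial fact (going back to Ando, Lamperti, and Tzafriri) that contractive idempotents on $L^p$-spaces are automatically spatial.
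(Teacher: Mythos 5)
Your strategy is viable and genuinely different from the paper's, but the pivotal justification in your second paragraph is wrong as stated: a contractive idempotent on an $L^p$-space is \emph{not} automatically hermitian or spatial when $p\neq 2$. The rank-one averaging projection $(x,y)\mapsto\left(\tfrac{x+y}{2},\tfrac{x+y}{2}\right)$ on $\ell^p_2$ is contractive for every $p\in[1,\infty]$ and idempotent, yet it is not multiplication by an indicator function and is not hermitian (by Berkson's classification, a hermitian projection on $\ell^p_2$ with $p\neq 2$ must be a coordinate projection). The Ando--Lamperti--Tzafriri results you invoke describe the \emph{ranges} of contractive projections on $L^p$ (they are again $L^p$-spaces); they do not say the projections are multiplication operators. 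The correct route --- and the one the paper takes --- is to note that $\delta_s=\chi_{\{s\}}$ is a hermitian idempotent in $C_0(G)$ and that contractive (nondegenerate) homomorphisms preserve hermitian elements, since $\|e^{it\pi(\delta_s)}\|=\|\pi(e^{it\delta_s})\|\leq \|e^{it\delta_s}\|_\infty=1$; Berkson's Example~1.1 then gives $\pi(\delta_s)=\chi_{X_s}$ for measurable sets $X_s$, pairwise disjoint by $\pi(\delta_s)\pi(\delta_t)=0$. With that repair your decomposition $E\cong\ell^p(G,F_0)$ and the identity $\|T_f\otimes\id_{F_0}\|=\|T_f\|$ (which you should cite or prove; it is Phillips' tensor lemma for $L^p$-spaces) go through and establish both isometric isomorphisms simultaneously, since the regular representation is among the covariant representations you decompose. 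That last remark also patches the small imprecision in your first paragraph: the reduced crossed product is defined via the regular representation on $\ell^p(G\times G)$, not via the canonical representation on $\ell^p(G)$, but your second paragraph shows all nondegenerate contractive covariant representations yield the same norm.

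Once fixed, your argument differs instructively from the paper's. The paper never decomposes a general covariant representation; it checks that the matrix units $a_{s,t}=\delta_s u_{st^{-1}}$ are sent to spatial partial isometries by any representation, invokes Phillips' theorem that spatial representations of $M^p_n$ are isometric to identify $F^p(G,C_0(G),\alpha)$ with $\overline{M}^p_G$ as a direct limit, and then gets isometry of $\kappa$ from the fact that a direct limit of $p$-incompressible algebras is $p$-incompressible. Your route avoids incompressibility entirely and yields the stronger structural statement that every covariant representation is a multiple of the canonical one, at the cost of the tensor-norm identity above. Your direct proofs of $\overline{M}^p_G=\K(\ell^p(G))$ for $p>1$ (via $P_FTP_F\to T$, using strong convergence of $P_F$ on both $\ell^p$ and $\ell^{p'}$) and of the strict inclusion for $p=1$ (via the rank-one functional $\xi\mapsto\bigl(\sum_s\xi(s)\bigr)\delta_e$) are correct; the paper simply cites Phillips for these facts.
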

Note that $\overline{M}^p_G$, being the direct limit of matrix algebras, is amenable even if $G$ is not.
\begin{proof} The last claim follows from Corollary 1.9 and Example 1.10 in \cite{Phi13arX:LpCrProd}.

We begin by showing that there is a natural isometric isomorphism
$$F^p(G,C_0(G),\alpha) \cong  \overline{M}^p_G.$$
For $s\in G$, let $u_s$ be the standard invertible isometry implementing $\alpha_s$ in the crossed product, and let $\delta_s\in C_0(G)$ be the function $\chi_{\{s\}}$. Then
$\alpha_s(\delta_t)=\delta_{st}$ for all $s,t\in G$, and $\mbox{span}\left(\{\delta_s\colon s\in G\}\right)$ is dense in $C_0(G)$.

For $s,t\in G$, set $a_{s,t}=\delta_s u_{st^{-1}}\in \ell^1(G,C_0(G),\alpha)$.
For $s_1,s_2,t_1,t_2\in G$, we have
\begin{align*} a_{s_1,t_1}a_{s_2,t_2} &= \delta_{s_1}u_{s_1t_1^{-1}}\delta_{s_2}u_{s_2t_2^{-1}}\\
&=\delta_{s_1}\alpha_{s_1t_1^{-1}}(\delta_{s_2})u_{s_1t_1^{-1}}u_{s_2t_2^{-1}} \\
&= \delta_{s_1}\delta_{s_1t_1^{-1}s_2}u_{s_1t_1^{-1}s_2t_2^{-1}}.\end{align*}
Thus, if $s_2\neq t_1$, then $a_{s_1,t_1}a_{s_2,t_2}=0$, because in this case $\delta_{s_1}\delta_{s_1t_1^{-1}s_2}=0$.
Taking $s_2=t_1$, we get $a_{s_1,t_1}a_{t_1,t_2}=a_{s_1,t_2}$.
Hence the elements
$\{a_{s,t}\colon s,t\in G\}$ satisfy the relations for a system of matrix units indexed by $G$. Also, $\mbox{span}\left(\{a_{s,t}\colon s,t\in G\}\right)$ is dense in $\ell^1(G,C_0(G),\alpha)$, and hence also in $F^p(G,C_0(G),\alpha)$.

Let $S$ be a finite subset of $G$. Then $\{a_{s,t}\colon s,t\in S\}$ is a standard system of matrix units for $M_{|S|}$, so the subalgebra $M_S$ of $F^p(G,C_0(G),\alpha)$ they generate is canonically
isomorphic, as a Banach algebra, to $M_{|S|}^p$. We claim that this isomorphism is isometric, this is, that the norm that $M_{S}$ inherits as a subalgebra of $F^p(G,C_0(G),\alpha)$ is the standard norm
of $M_{|S|}^p$. To check this, it will be enough to show that if
$$\rho\colon \ell^1(G,C_0(G),\alpha) \to \B(L^p(X,\mu))$$
is a nondegenerate contractive representation on a $\sigma$-finite
measure space $(X,\mu)$, then the restriction
$$\rho|_{M_{S}}\colon M_{S}\to \B(L^p(X,\mu))$$
is spatial in the sense of Definition 7.1 in \cite{Phi12arX:LpCuntz}.

Given such a representation $\rho\colon \ell^1(G,C_0(G),\alpha) \to \B(L^p(X,\mu))$,
let $\pi\colon C_0(G)\to \B(L^p(X,\mu))$ be the nondegenerate contractive representation, and
let $v\colon G\to \Isom(L^p(X,\mu))$ be the isometric group representation such that
$(\pi,v)$ is the covariant representation of $(G, C_0(G),\alpha)$ whose integrated form is $\rho$.
For $s$ and $t$ in $S$, one has
$$\rho(a_{s,t})=\pi(\chi_{\{s\}})v_{st^{-1}}.$$

Since $\chi_{\{s\}}$ is a hermitian idempotent in $C_0(G)$, it follows that $\pi(\chi_{\{s\}})$ is also hermitian.
Use Example 1.1 in \cite{Ber72HermitianProj} to choose
a measurable subset $F$ of $X$ such that $\pi(\chi_{\{s\}})=\chi_F$. Since $v_{st^{-1}}$ is a bijective isometry, it is spatial. If $(X,X,T,g)$ is a spatial system for
$v_{st^{-1}}$ (see Definition 6.1 in \cite{Phi12arX:LpCuntz}), then it is easy to check that $(F,X,T,g)$ is a spatial system for $\rho(a_{s,t})$. This shows that $\rho(a_{s,t})$ is a spatial partial isometry, and hence $\rho|_{M_S}$ is a spatial representation.

Denote by $\mathcal{F}$ the upward directed family of all finite subsets $S$ of $G$. For each $S$ in $\mathcal{F}$, let $\varphi_S\colon M_{|S|}^p\to F^p(G,C_0(G),\alpha)$
be the canonical isometric isomorphism that sends the standard matrix units of $M_{|S|}$ to the set of matrix units $\{a_{s,t}\colon s,t\in S\}$. It is clear that there is an isometric homomorphism
$$\varphi_0\colon \bigcup_{S \in \mathcal{F}} M_{|S|}^p\to F^p(G,C_0(G),\alpha)$$
whose range contains $\{a_{s,t}\colon s,t\in G\}$. Note that $\bigcup\limits_{S \in \mathcal{F}} M_{|S|}^p$ is a subalgebra of $\K(\ell^p(G))$. Since $\varphi_0$ is isometric, it extends by continuity to an isometric
homomorphism
$$\varphi\colon \overline{\bigcup_{S \in \mathcal{F}} M_{|S|}^p}\to F^p(G,C_0(G),\alpha),$$
which must be surjective since its range is dense. This is the desired isometric isomorphism.

We will now show that the canonical map
\[\kappa\colon F^p(G, C_0(G),\alpha)\to F^p_\lambda(G, C_0(G),\alpha)\]
is an isometric
isomorphism.
The usual argument for $C^*$-algebras is that $\kappa$ is surjective, and since $F^2(G, C_0(G),\alpha)\cong \K(\ell^2(G))$ is simple, $\kappa$ must be an isomorphism.
However, when $p\neq 2$, we do not know whether $\kappa$ has closed range.
Here is where incompressibility comes into play.

The full crossed product $F^p(G, C_0(G),\alpha)$ is $p$-incompressible by \autoref{lem: DirLimIncomp}, because
it is the direct limit of the $p$-incompressible Banach algebras $M_{S}^p$ (see Theorem 7.2 in \cite{Phi12arX:LpCuntz}).
Since $F^p_\lambda(G,C_0(G),\alpha)$ can be isometrically represented on an $L^p$-space, it follows that $\kappa$ must
be isometric. Finally, having dense range, $\kappa$ is an isometric isomorphism. This finishes the proof.
\end{proof}

\providecommand{\bysame}{\leavevmode\hbox to3em{\hrulefill}\thinspace}
\providecommand{\noopsort}[1]{}
\providecommand{\mr}[1]{\href{http://www.ams.org/mathscinet-getitem?mr=#1}{MR~#1}}
\providecommand{\zbl}[1]{\href{http://www.zentralblatt-math.org/zmath/en/search/?q=an:#1}{Zbl~#1}}
\providecommand{\jfm}[1]{\href{http://www.emis.de/cgi-bin/JFM-item?#1}{JFM~#1}}
\providecommand{\arxiv}[1]{\href{http://www.arxiv.org/abs/#1}{arXiv~#1}}
\providecommand{\doi}[1]{\url{http://dx.doi.org/#1}}
\providecommand{\MR}{\relax\ifhmode\unskip\space\fi MR }
% \MRhref is called by the amsart/book/proc definition of \MR.
\providecommand{\MRhref}[2]{%
  \href{http://www.ams.org/mathscinet-getitem?mr=#1}{#2}
}
\providecommand{\href}[2]{#2}

\end{document}